\documentclass[12pt]{article}

\usepackage{amsmath}
\usepackage{amssymb}
\usepackage{amsthm}
\usepackage{xcolor}

\usepackage{ifpdf}
\ifpdf
\usepackage[colorlinks=true,linkcolor=black,final,hyperindex,citecolor=black]{hyperref}
\else
\usepackage[colorlinks,final,hyperindex,hypertex]{hyperref}
\fi

\usepackage{marginnote}

\usepackage{enumitem}  % http://ctan.org/pkg/enumitem
\usepackage{calc}% http://ctan.org/pkg/calc
\setlist{labelindent=1pt,itemsep=.5em}
\setlist[itemize]{leftmargin=1.2cm}
\setlist[enumerate]{itemindent=0em,leftmargin=1.2cm}
\usepackage{cite}

\usepackage{marginnote}

\allowdisplaybreaks

\newtheorem{definition}{Definition}
\newtheorem{theorem}{Theorem}
\newtheorem{lemma}{Lemma}
\newtheorem{corollary}{Corollary}
\newtheorem{remark}{Remark}

\newtheorem{example}{Example}

\usepackage{authblk}

\title{Lower and upper bounds of joint $(f,\delta)$-numerical radius functions}

\author[1]{Zameddin I. Ismailov\thanks{E-mail: \texttt{zameddin.ismailov@gmail.com}}}
\author[2]{Sergei Silvestrov\thanks{E-mail: \texttt{sergei.silvestrov@mdu.se} (Corresponding author)}}
\author[1]{Pembe Ipek Al\thanks{E-mail: \texttt{ipekpembe@gmail.com}}}
\affil[1]{Department of Mathematics, Karadeniz Technical University, Trabzon, Türkiye}
\affil[2]{M\"{a}lardalen University, Department of Business and Mathematics\\
Faculty of Philosophy, Box 883, 72123 V\"{a}ster{\aa}s, Sweden}
\date{} % no date; remove this line if you want today's date

\begin{document}
%\linenumbers % Turn on line numbers with included \usepackage{lineno}

\maketitle

\begin{abstract}
In this study, the classical results on the joint numerical radius for $n$-tuples of Hilbert space operators are extended to the setting of the joint $(f,\delta)$-numerical radius. New and diverse contributions to this area are provided, including novel estimates for the lower and upper bounds of the $(f,\delta)$-numerical radius in the context of sectorial operators.
\end{abstract}
\noindent\textbf{Keywords:}  $(f,\delta)$-numerical radius, sectorial operator, convex function

\noindent{\textbf{MSC(2020):}}  47A12, 47A30, 47A63, 47B44, 15A60
\tableofcontents

%----------------------------------------------------------------------------------------------------------------------
\smallskip

\smallskip

\section{Introduction}
\label{Sec:1}

The concepts of numerical range and numerical radius of an operator play a
fundamental role in both pure and applied mathematics, and have been extensively
studied due to their wide-ranging applications in fields such as engineering,
quantum computing, quantum physics, numerical analysis, differential equations,
fluid dynamics and the geometry of Banach spaces (see \cite{Axelsson1993,Bonsal1971,Horn1991}).
The investigation of numerical radius inequalities and their various refinements
has become a prominent and contemporary research trend in the theory of operators
on Hilbert spaces.

Throughout this article, $\mathcal{H}$ denotes a complex Hilbert space endowed with the inner
product $\langle \cdot, \cdot\rangle $ and associated norm $\|\cdot\|$. Let $\mathfrak{B}(\mathcal{H})$ be the $C^{*}$-algebra of all bounded linear operators acting on $\mathcal{H}$ and $S_{1}(\mathcal{H})$ be the unit sphere of the Hilbert space $\mathcal{H}$.
The numerical
radius of an operator $A$ is defined by
\[
    \omega(A) = \sup \{ |\langle Ax,x\rangle| : x \in S_{1}(\mathcal{H}) \}.
\]
The usual operator norm of an operator $A \in \mathfrak{B}(\mathcal{H})$
is defined by
\[
    \|A\| = \sup \{ \|Ax\| : x \in S_{1}(\mathcal{H}) \}.
\]

It is well recognized in operator theory literature that the exact computation of the numerical radius $\omega(A)$ for a general operator $A \in \mathfrak{B}(\mathcal{H})$ is a highly challenging problem.
Explicit formulas are typically available only for particular classes of operators that exhibit special structural properties.
Consequently, a significant line of research has been devoted to the development of lower and upper bounds for $\omega(A)$.

Recall that the numerical radius $\omega(\cdot)$ is a norm on $\mathfrak{B}(\mathcal{H})$, and among the most basic bounds for $\omega(A)$ is the following two-sided inequality \cite{Halmos1982},
\begin{equation}
\label{equ1}
    \frac{1}{2}\|A\| \leq \omega(A) \leq \|A\|.
\end{equation}
The right-hand side of \eqref{equ1} becomes an equality when $A$ is normal, while the left-hand side becomes an equality when $A^{2} = 0,$ where $0$ is the zero operator in $\mathfrak{B}(\mathcal{H})$.

Sharpening inequality \eqref{equ1} has received considerable attention in the literature. Among these attempts, the following two-sided inequality is of interest \cite{Kittaneh2005}:
\begin{equation}
\label{equ2}
    \frac{1}{4}\left\|A^{*}A + AA^{*}\right\|
    \leq \omega^{2}(A)
    \leq \frac{1}{2}\left\|A^{*}A + AA^{*}\right\|.
\end{equation}
Inequality \eqref{equ2} refines both the left and right inequalities in \eqref{equ1}.

For our purpose, we will need the following definition for $n$-tuple of operatorss in $   \mathfrak{B}^{n}(\mathcal{H}) := \mathfrak{B}(\mathcal{H}) \times \cdots \times
\mathfrak{B}(\mathcal{H}) \ (n \ times), n\geq 1.$

\begin{definition}
\label{def1}
Let $\mathbb{A} = (A_{1}, \ldots, A_{n})$, $\mathbb{B} = (B_{1}, \ldots, B_{n}) \in \mathfrak{B}^{n}(\mathcal{H})$ be two $n$-tuples of operators, and $\lambda \in \mathbb{C}$. The following definitions are introduced.
\begin{enumerate}[label=\upshape{\arabic*.}, ref=\upshape{\arabic*}, labelindent=5pt, leftmargin=*]
    \item $\lambda \mathbb{A} = (\lambda A_{1}, \ldots, \lambda A_{n})$;
    \item $\mathbb{A} + \mathbb{B} = (A_{1} + B_{1}, \ldots, A_{n} + B_{n})$;
    \item $\mathbb{A}^{*} = (A_{1}^{*}, \ldots, A_{n}^{*})$, where $ A_{m}^{*} $ is the adjoint of the operator $ A_{m} $ in $ \mathcal{H}, $ $ 1\leq m \leq n; $
    \item $\mathbb{A} = 0$ if and only if for each $1 \le m \le n$, $A_{m} = 0$;
    \item $Re \mathbb{A} = (Re A_{1}, \ldots, Re A_{n})$
    and $Im \mathbb{A} = (Im A_{1}, \ldots, Im A_{n}) ;$
    \item $ \vert \mathbb{A} \vert = \left( \vert A_{1} \vert , \ldots , \vert A_{n} \vert \right),   $ where $ \vert A_{m} \vert = (A_{m}^{*}A_{m})^{\frac{1}{2}} , \  1\leq m \leq n; $
    \item $\mathbb{A} x =(A_{1}x,\ldots,A_{n}x)\in \mathcal{H}^{n}$ and     $\Vert\mathbb{A} x\Vert =
    \left( \sum\limits_{m=1}^{n} \Vert A_{m}x\Vert^{2} \right)^{\frac{1}{2}}, \ x \in \mathcal{H}$;
    \item The operator norm of $\mathbb{A} \in \mathfrak{B}^{n}(\mathcal{H})$ is defined as
    \[
        \|\mathbb{A}\| =
        \sup \left\{
            \left( \sum\limits_{m=1}^{n} \|A_{m}x\|^{2} \right)^{\frac{1}{2}}
            : x \in S_{1}(\mathcal{H})
        \right\} =\left\| \sum\limits_{m=1}^n A_m^* A_m \right\|^{1/2} ;
    \]
    \item $ \langle \mathbb{A}x,x \rangle = \left( \langle A_{1}x,x\rangle, \ldots , \langle A_{n}x,x\rangle \right) \in \mathbb{C}^{n} , \   x\in \mathcal{H}; $
    \item $|\langle \mathbb{A} x,x\rangle| =
    \left( \sum\limits_{m=1}^{n} |\langle A_{m}x, x\rangle|^{2} \right)^{\frac{1}{2}}, \ x \in \mathcal{H}$;
    \item The joint numerical range of $\mathbb{A} \in \mathfrak{B}^{n}(\mathcal{H})$ is a subset of $\mathbb{C}^{n}$ defined by
    $
        W(\mathbb{A}) =
        \left\{
            \bigl(\langle A_{1}x, x\rangle, \ldots, \langle A_{n}x, x\rangle\bigr)
            : x \in S_{1}(\mathcal{H})
        \right\}.\ \textnormal{(See  \textnormal{\cite{Jana2024,Muller2020,Takaguchi1981}})}
   $
    \item The joint numerical radius of $\mathbb{A} \in \mathfrak{B}^{n}(\mathcal{H})$ is defined by
    \[
        \omega(\mathbb{A}) =
        \sup \left\{
            \left( \sum\limits_{m=1}^{n} |\langle A_{m}x, x\rangle|^{2} \right)^{\frac{1}{2}}
            : x \in S_{1}(\mathcal{H})
        \right\}.
    \]
\end{enumerate}
\end{definition}
Clearly, for $\mathbb{A} \in \mathfrak{B}^{n}(\mathcal{H})$,
$   \omega(\mathbb{A}) \leq
    \Vert \mathbb{A} \Vert. $

In 2014, Drnov\v{s}ek and Müller \cite{Drnovsek2014}, and Müller \cite{Muller2014} discussed the joint numerical radius of an $n$-tuple of operators and studied the lower bound of the numerical radius. More precisely, for $\mathbb{A} \in \mathfrak{B}^{n}(\mathcal{H})$,
\(
    \omega(\mathbb{A}) \geq
    \dfrac{\Vert \mathbb{A} \Vert}{2\sqrt{n}}.
\)
Therefore, the joint numerical radius is a norm equivalent to the operator norm on
$\mathfrak{B}^{n}(\mathcal{H})$. In particular, if $ \mathbb{A} \in \mathfrak{B}^{n}(\mathcal{H})$
is an $n$-tuple of isometries on a Hilbert space $\mathcal{H}$, then $ \Vert \mathbb{A} \Vert =  \|\sum\limits_{j=1}^n I \|^{1/2} = \|nI\|^{1/2} = \sqrt{n}. $
Thus, for any $n$-tuple of isometries, $\omega(\mathbb{A}) \leq \sqrt{n}$,
that is, the numerical radius is bounded from above by $\sqrt{n}$.

For any commuting $n$-tuples in general, $r(\mathbb{A})\leq \omega(\mathbb{A}) \leq \Vert \mathbb{A} \Vert,$ where $r(\mathbb{A})$ is the joint spectral radius.
The submultiplicativity  $\omega(TS) \le \omega(T)\omega(S)$ is generally false.
However, for commuting operators $T$ and $S$ the bound $ \omega(TS) \le 2\omega(T)\omega(S) $ holds.

While the numerical range $W(A)=\{\langle Ax,x\rangle: x\in \mathcal{H}, \|x\|=1\}$ of any bounded operator $T$ on a separable complex Hilbert space is always a convex set,
the joint numerical range of $n$-tuple $W(\mathbb{A})$ is, in general, not convex for $n\ge 2$.
For a single normal operator, the closure of its numerical range is the convex hull of its spectrum.
The joint numerical range of a commuting tuple of normal operators is always a convex set, and the closure of the joint numerical range
equals the convex hull of the joint approximate spectrum
\begin{multline*}
 \sigma_{ap}(A)=\{\lambda = (\lambda_1, \dots, \lambda_n) \in \mathbb{C}^n: \exists \{x_k, \Vert x_k \Vert=1\}_{k=0}^{\infty} \subset \mathcal{H}: \\
 \lim\limits_{k \to \infty} \|(A_j - \lambda_j I)x_k\| = 0 \quad \text{for all } j = 1, \dots, n \}.
\end{multline*}
For commuting normal operators, the joint numerical range's closure also coincides with its ``joint numerical status," (joint ``algebraic numerical range") which is the set of all states
\begin{multline*}
\mathcal{S}(C^*(\mathbb{A}, I))=\{\phi: \mathcal{A} \to \mathbb{C}: \phi \text{ is a linear functional},  \\
\forall\ T \in \mathcal{A} \phi(T^*T) \geq 0 \text{ (Positivity)},
 \phi(I) = 1 \text{ (Normalization)}
\end{multline*}
on the
$C^*$-algebra generated by the operators $C^*(\mathbb{A}, I)$ (the smallest $C^*$-subalgebra of $B(\mathcal{H})$ containing all operators of the tuple $\mathbb{A}$ and the identity operator $I$), which is
a commutative $C^*$-algebra, since  $A_1, \dots, A_n$ are pairwise commuting and normal.
This set is always convex and compact. Because the states of a
$C^*$-algebra are the weak-star closure of the convex hull of its ``pure states" (which correspond to vector states
in the irreducible case), this leads to a ``convex hull" structure.

If the commuting isometries are also normal (unitary operators, $U^*U=UU^*=I$), then the numerical radius is exactly equal to the operator norm.
When the isometries in $n$-tuple $\mathbb{A}$ commute, the joint numerical radius is equal to the joint spectral radius.
For commuting normal isometries (unitaries),  $ \omega(\mathbb{A}) = \|\mathbb{A}\| = \sqrt{n} $, as a consequence of the existence of a joint spectrum in which the values can be simultaneously maximized.
The Spectral Mapping Theorem for commuting tuples forces the numerical radius to hit the upper bound of the norm when the operators commute and behave like unitaries on their joint spectrum.

The numerical radius drops significantly when the isometries anticommute ($A_i A_j = -A_j A_i$).
For anticommuting isometries (e.g., generators of a Clifford algebra), the joint numerical radius is significantly smaller than $\sqrt{n}$.
For anticommuting isometries, the bound often reduces toward $\omega(\mathbb(A))\leq \sqrt{\frac{n}{2}} $
or even lower, depending on the dimension of the space.
For the $n$-tuple of anticommuting isometries, the generators of a Clifford algebra $C\ell_n(\mathbb{C})$), the joint numerical range is effectively a unit ball in $\mathbb{R}^n$, yielding
$ \omega(\mathbf{A}) = 1$.
The norm $\|\mathbb{A}\|$ grows as $\sqrt{n}$, whereas the numerical radius remains at $1$.
Since $\frac{w(\mathbf{A})}{\|\mathbf{A}\|} = \frac{1}{\sqrt{n}}\rightarrow 0, n\rightarrow +\infty$, for highly non-commuting systems, the numerical radius becomes a negligible fraction of the total operator norm.
The ratio (or the ``gap") $\frac{w(\mathbf{A})}{\|\mathbf{A}\|} = \frac{1}{\sqrt{n}}$  may be interpreted as a quantitative measure of the non-commutativity.
\begin{example}[Pauli tuple]
For the 3-tuple \[\mathbb{\sigma} = (\sigma_x=\begin{pmatrix}
  0 & 1 \\
  1 & 0
\end{pmatrix}, \sigma_y=\begin{pmatrix}
 0 & -i \\
  i & 0
\end{pmatrix}, \sigma_z=\begin{pmatrix}
  1 & 0 \\
  0 & -1
\end{pmatrix}),\]
the operators are unitary isometries, $\sigma_j^*\sigma_j=\sigma_j\sigma_j^*=I$ (introduced by Pauli to describe electron spin), which anticommute $\sigma_j\sigma_k = - \sigma_k\sigma_j$  for $j\neq k$.
For the $3$-tuple $(\sigma_x, \sigma_y, \sigma_z)$,
\begin{align*}
& \|\mathbb{A}\| = \|\sqrt{\sigma_x^*\sigma_x+\sigma_y^*\sigma_y+\sigma_z^*\sigma_z}\|=\|\sqrt{I+I+I}\| = \sqrt{3}, \\
& \omega(\mathbb{A}) =
        \sup\limits_{v \in S_{1}(\mathcal{H})} \left\{
            \sqrt{\left(|\langle \sigma_x v, v\rangle|^{2} +|\langle \sigma_y v, v\rangle|^{2} +|\langle \sigma_z v, v\rangle|^{2} \right)}
        \right\}=1
\end{align*}
since for $v = \begin{pmatrix} \alpha \\ \beta \end{pmatrix} \in \mathbb{C}^2$ such that $\Vert v \Vert=\sqrt{|\alpha|^2+|\beta|^2}=1$,
if $\alpha =Re(\alpha)+iIm(\alpha)=a_1 + ia_2 $ and $\beta =Re(\beta)+iIm(\beta)=b_1 + ib_2$, $a_1, a_2, b_1, b_2 \in \mathbb{R}$, then
\begin{align*}
 &  \langle \sigma_x v, v \rangle = \bar{\alpha}\beta + \alpha\bar{\beta} = 2(a_1b_1 + a_2b_2), \\
& \langle \sigma_y v, v \rangle = -i(\bar{\alpha}\beta - \alpha\bar{\beta}) = 2(a_2b_1-a_1b_2), \\
 &   \langle \sigma_z v, v \rangle = |\alpha|^2 - |\beta|^2 = (a_1^2 + a_2^2) - (b_1^2 + b_2^2), \\
& |\langle\sigma_x v, v\rangle|^{2} + |\langle \sigma_y v, v\rangle|^{2} + |\langle \sigma_z v, v\rangle|^{2} \\
& \quad =4(a_1b_1 + a_2b_2)^2+4(a_2b_1-a_1b_2)^2 + ((a_1^2 + a_2^2) - (b_1^2 + b_2^2))\\
& \quad =(a_1^2 + a_2^2 + b_1^2 + b_2^2)^2=(|\alpha|^2+|\beta|^2)^2 = 1.
\end{align*}
Thus, the vectors $(\langle \sigma_x v, v\rangle, \langle \sigma_y v, v\rangle, \langle \sigma_z v, v\rangle)$ are points on the Bloch sphere (Euclidean length $1$).
These vectors cover the entire unit sphere in $\mathbb{R}^3$ when $a_1, a_2, b_1, b_2 \in \mathbb{R}, a_1^2 + a_2^2 + b_1^2 + b_2^2=|\alpha|^2+|\beta|^2)^2 = \Vert v \Vert^2= 1$.
The joint numerical range of any tuple of matrices is convex. The joint numerical range $W(\sigma_x, \sigma_y, \sigma_z)$ is the unit ball in $\mathbb{R}^3$ (Bloch ball), which is
the smallest convex set containing the unit sphere in $\mathbb{R}^3$.
For the $2$-tuple $\mathbb{A}=(\sigma_x, \sigma_y)$,
\begin{align*}
\|\mathbb{A}\| & = \|\sqrt{\sigma_x^*\sigma_x+\sigma_y^*\sigma_y}\|=\|\sqrt{I+I}\| = \sqrt{2},\\
w(\mathbb{A})& = \sup\limits_{\Vert v \Vert^2= a_1^2+a_2^2+b_1^2+b_2^2=1} 4 (a_1^2+a_2^2) (b_1^2+b_2^2)
=\sup\limits_{|\alpha|^2+|\beta|^2=1} 4|\alpha|^2  |\beta|^2 =1,
\end{align*}
with the supremum achieved for $\alpha=\beta=\frac{1}{2}$. The vectors $(\langle \sigma_x v, v \rangle,\langle \sigma_y v, v \rangle)$ cover the entire unit disc in $\mathbb{R}^2$ for $v, \Vert v \Vert= 1$.
Hence, the joint range is a unit disk in $\mathbb{R}^2$.

The drop from \(w(\mathbb{A}) = \sqrt{n}\)  for commuting isometries to \(w(\mathbb{A}) = 1\) for anticommuting isometries
is at the heart of the mathematical foundation of the uncertainty principle in Quantum Mechanics.
%since
%for anticommuting operators $\{A, B\} = 0$, the Robertson-Schrödinger relation implies $\Delta A^2 + \Delta B^2 \ge 2 - (\langle A \rangle^2 + \langle B \rangle^2)$, and
%since $\sum\limits \langle A_j \rangle^2 \le w(\mathbb{A})^2 = 1$, the sum of variances is strictly bounded away from zero.
\end{example}

The theory of joint numerical range is an active field of research in operator theory,
and various important results including geometric structure of joint numerical range
have been obtained in the last few years (see \cite{Drnovsek2014} and \cite{Muller2014}).

General information on the joint numerical radius can be found in \cite{Bhunia2022}, \cite{Dragomir2013}, \cite{Gau2021}. Some general information and results on the joint numerical ranges and numerical radii can be
found in \cite{Feki2024, Gau2021, Guesba2024, Guesba2025, Mal2023, Muller2020, Wang2025} and references therein.

We now define another type of numerical range, numerical radius, and operator norm will be given.

\begin{definition}[\!\!\cite{Stampfli1970}]
\label{def2}
Let $A \in \mathfrak{B}(\mathcal{H})$ and $0 \leq \delta \leq \|A\|$.
The $\delta$-numerical range of $A$ is defined as
\(
    W_{\delta}(A)=cl\left( \left\lbrace  \langle Ax,x\rangle: x\in S_{1}(\mathcal{H}), \ \Vert Ax \Vert\geq \delta \right\rbrace \right)
\)
where $cl(\cdot)$ denotes the closure of the set in the complex plane.
\end{definition}

Clearly, $W_{\delta}(A)$ is a closed subset of the closure of the usual numerical
range. Following a small modification of Dekker’s theorem \cite{Dekker1969}, it can be readily verified that  $ W_{\delta}(A) $ is connected. The question of whether $ W_{\delta}(A) $ is convex is of interest; it is known to be convex when $ A $ is normal or when the Hilbert space is two-dimensional.

\begin{definition}
\label{def3}
Let $A \in \mathfrak{B}(\mathcal{H})$ and $0 \leq \delta \leq \|A\|$. The
$\delta$-numerical radius and the $ \delta$-norm of $ A $ are defined respectively as
\begin{align*}
\omega_{\delta}(A)  &= \sup\{|\lambda| : \lambda \in W_{\delta}(A)\},
\\
\Vert A \Vert_{\delta} &=\sup\lbrace \Vert A x \Vert:x\in S_{1}(\mathcal{H}), \ \Vert A x \Vert \geq \delta \rbrace .
\end{align*}
\end{definition}

An important early generalization was introduced for a single Hilbert space operator through the notion of the $\delta$-numerical radius, first studied by Cho and Takaguchi \cite{Takaguchi1981}, building on ideas originating in the work of Stampfli \cite{Stampfli1970}.

The joint $\delta$-numerical radius for an $n$-tuple of operators is defined as follows.
\begin{definition}
\label{def4}
Let $\mathbb{A} = (A_{1}, \ldots, A_{n}) \in \mathfrak{B}^{n}(\mathcal{H})$. The joint
$\delta$-numerical radius of $\mathbb{A}$ is defined as
\[
    \omega_{\delta}(\mathbb{A})
    = \sup \Biggl\{
        \Bigl( \sum\limits_{m=1}^{n} |\langle A_{m}x, x\rangle|^{2} \Bigr)^{\frac{1}{2}} :
        x \in S_{1}(\mathcal{H}),\ \|\mathbb{A} x\| \ge \delta
    \Biggr\},
    \quad 0 \le \delta \le \|\mathbb{A}\|.
\]
\end{definition}

It must be noted that there are different calculations of the numerical radius in terms of the norm of the given operator, the norm of the adjoint operator, the norm of the modulus of the considered operator, or in different combinations of the given operator, etc.  The detailed information on these results can be found in \cite{AbuOmar2019,Bourhim2017,Halmos1982,Moradi2021,Moradi2021b,Moslehian2017,Omidvar2020,Omidvar2021,Sababheh2019,SababhehMoradi2021,Sheikhhosseini2022,Shebrawi2023,Yamazaki2007}.

The concept of the numerical radius has been generalized in many different directions. It will now be given a new definition.
\begin{definition} [\!\!\cite{Alomari2024}]
\label{def5}
For a continuous strictly increasing surjective function $ f:[0,\infty)\rightarrow [0,\infty) $ and $n$-tuple of operators $\mathbb{A} = (A_{1}, \ldots, A_{n}) \in \mathfrak{B}^{n}(\mathcal{H})$\textnormal{:}
\begin{enumerate}[label=\upshape{\arabic*.}, ref=\upshape{\arabic*}, labelindent=5pt, leftmargin=*]
    \item $ \Vert \mathbb{A}x \Vert_{f}= f^{-1}\left( \sum\limits_{m=1}^{n}f\left( \Vert A_{m}x \Vert \right)  \right) , \ x\in \mathcal{H}; $
    \item  $ \Vert \mathbb{A} \Vert_{f}= \sup \left\lbrace \Vert \mathbb{A}x \Vert_{f}: x\in S_{1}(\mathcal{H})  \right\rbrace ; $
    \item $ \vert \langle \mathbb{A}x, x \rangle   \vert_{f} = f^{-1} \left( \sum\limits_{m=1}^{n}f\left( \vert \langle A_{m}x,x\rangle \vert \right)  \right), \ x\in \mathcal{H};  $
    \item
    $
    \omega_{f}(\mathbb{A})=\sup\left\lbrace \vert \langle \mathbb{A} x,x \rangle \vert_{f} : x\in S_{1} (\mathcal{H}) \right\rbrace
    $
which is called $f$-numerical radius of $ \mathbb{A}\in \mathfrak{B}^{n}(\mathcal{H})$.
    \end{enumerate}
\end{definition}

A new definition of the numerical radius is now given, on which all subsequent investigations in this paper will focus.

\begin{definition}
\label{def6}
For a continuous strictly increasing surjective function $ f:[0,\infty)\rightarrow [0,\infty) $ and $n$-tuple operator
$\mathbb{A} = (A_{1}, \ldots, A_{n}) \in \mathfrak{B}^{n}(\mathcal{H}), \ n\geq 1$:
\begin{enumerate}[label=\upshape{\arabic*.}, ref=\upshape{\arabic*}, labelindent=5pt, leftmargin=*]
\item For any $ 0\leq \delta \leq \Vert \mathbb{A} \Vert_{f}  $,
\(
\Delta_{(f,\delta)}(\mathbb{A})= \left\lbrace x\in S_{1}(\mathcal{H}) : \Vert \mathbb{A}x \Vert_{f}\geq \delta  \right\rbrace ;
\)
\item For any $ 0\leq \delta \leq \Vert \mathbb{A} \Vert_{f}  $,
\(
\omega_{(f,\delta )}(\mathbb{A})= \sup\left\lbrace \vert \langle \mathbb{A}x,x \rangle \vert_{f} : x\in \Delta_{(f,\delta)}(\mathbb{A})\right\rbrace ,
\)
which will be called joint $(f,\delta)$-numerical radius of the $n$-tuple of operators $ \mathbb{A};$
\item $ \Vert \mathbb{A} \Vert_{(f,\delta)}= \sup\left\lbrace \Vert \mathbb{A}x \Vert_{f}  : x\in \Delta_{(f,\delta)}(\mathbb{A})\right\rbrace . $
\end{enumerate}
\end{definition}

Recently, attention has been focused on the numerical radius concepts associated with operator tuples. In particular, the function numerical radius of an $n$-tuple of bounded linear operators $\mathbb{A}=(A_{1},\dots,A_{n})\in\mathfrak{B}^{n}(\mathcal{H})$ has been introduced and studied systematically. This notion unifies several well-known quantities in the operator theory. Indeed, in the special case $n=1$ and $\delta=0$, the function numerical radius reduces to the classical numerical radius.

Furthermore, when the function is chosen as $f(t)=t^{2}$ for $t\in[0,\infty)$ and $\delta=0$, $ \omega_{(t^{2},0)} $ coincides with the classical Euclidean operator norm of an operator $n$-tuple. This observation has been explored and exploited in various recent studies; see, for example, \cite{Drnovsek2014, Feki2024, FekiYamazaki2021, Gau2021, Guesba2024, Guesba2025,  Jana2024, Mal2023, Muller2020, WangWang2025} and the references therein.

It should also be noted that, in the case $\delta=0$, the function numerical radius was previously considered in \cite{Alomari2024}, where several generalizations of known inequalities and structural results were obtained.

Moreover, for $-\infty<\delta\leq 0$, the function numerical radius coincides with the case $\delta=0$, that is, $  \omega_{(f,\delta)}(\mathbb{A}) = \omega_{(f,0)}(\mathbb{A}) $  ensuring the consistency of the theory for nonpositive values of the parameter.

For any $ \delta\leq \Vert \mathbb{A} \Vert_{f}  $ and $ \mathbb{A}\in \mathfrak{B}^{n}(\mathcal{H}) $ it is clear that (see \cite[Thm. 2.1]{Alomari2024})
\[
\omega_{(f,\delta)}(\mathbb{A})\leq \omega_{(f,0)}(\mathbb{A})\leq \sum\limits_{m=1}^{n}\omega (A_{m}).
\]

On the other hand, it is easy to see that for any $ \tau \leq \delta \leq \Vert \mathbb{A} \Vert_{f} $ is true
\[
\omega_{(f,\delta)}(\mathbb{A})\leq \omega_{(f,\tau)}(\mathbb{A})\leq \omega_{(f,0)}(\mathbb{A})\leq \Vert \mathbb{A} \Vert_{f}.
\]

For clarification, we provide the following simple example.
\begin{example}
In the Eucleadean space $ \mathbb{R}^{2}(\mathbb{R}) $ consider the operator ($ 2$-tuple)
$ \mathbb{A}=(A_{1},A_{2}), \ \mathbb{A} \in \mathfrak{B}^{2}(\mathbb{R}^{2}), $
where
\[
A_{1}=\begin{pmatrix}
        0 & 1\\
        0 & 0
    \end{pmatrix} : \mathbb{R}^{2}\rightarrow \mathbb{R}^{2} \ \text{and} \
A_{2}=\begin{pmatrix}
        0 & 0\\
        1 & 0
    \end{pmatrix} : \mathbb{R}^{2}\rightarrow \mathbb{R}^{2}.
\]
Let $ f:[0,\infty )\rightarrow [0,\infty ), \ f(t)=t, \ 0\leq t <\infty$.
Then, for $ x=(x_{1},x_{2})\in \mathbb{R}^{2} $,
\begin{align*}
&\Vert A_{1}x \Vert = \vert x_{2} \vert , \  \Vert A_{2}x \Vert = \vert x_{1} \vert, \  \vert \langle A_{1}x,x \rangle \vert = \vert \langle A_{2}x,x \rangle \vert = \vert x_{1}x_{2} \vert,\\
&\Vert \mathbb{A} x \Vert_{f}= \Vert A_{1}x \Vert + \Vert A_{2}x \Vert = \vert x_{1} \vert + \vert x_{2} \vert ,\\
&
\vert \langle \mathbb{A} x,x  \rangle \vert_{f} = \vert \langle A_{1}x,x \rangle \vert + \vert \langle A_{2}x,x \rangle \vert =2 \vert x_{1}x_{2} \vert .
\\
& \Vert \mathbb{A} \Vert_{f} = \sup\left\lbrace \Vert \mathbb{A}x \Vert_{f}: x \in S_{1}(\mathbb{R}^{2}) \right\rbrace  \\
&\quad = \sup \left\lbrace \vert x_{1} \vert + \vert x_{2} \vert : x=(x_{1},x_{2})\in S_{1}(\mathbb{R}^{2}) \right\rbrace  \\
&\quad = \sup \left\lbrace \sqrt{1+2\vert x_{1}x_{2} \vert}: (x_{1},x_{2})\in S_{1}(\mathbb{R}^{2}) \right\rbrace = \sqrt{2}.
\end{align*}
Then, for any $ 0\leq \delta \leq \sqrt{2} $ and $ (x_{1},x_{2}) \in S_{1}(\mathbb{R}^{2})$ from the inequality $ \Vert \mathbb{A}x \Vert_{f}\geq \delta $ we have $ \vert x_{1} \vert + \vert x_{2} \vert \geq \delta . $ Hence $ 1+2\vert x_{1}x_{2} \vert  \geq \delta^{2} $ and from this $ \vert x_{1}x_{2} \vert \geq \dfrac{\delta^{2}-1}{2}, \ \delta\leq \sqrt{2}. $
Therefore, for $ 0\leq \delta \leq \sqrt{2} $,
\begin{multline*}
\Delta_{(f,\delta)}(\mathbb{A})  =  \left\lbrace x=(x_{1},x_{2})\in S_{1}(\mathbb{R}^{2}) : \Vert \mathbb{A} x \Vert_{f}\geq \delta  \right\rbrace \\
 =  \left\lbrace (x_{1},x_{2})\in S_{1}(\mathbb{R}^{2}) : \vert x_{1}x_{2}\vert \geq \dfrac{\delta^{2}-1}{2}   \right\rbrace \nonumber
\end{multline*}
Consequently, we have
\begin{eqnarray}
\omega_{f,\delta}(\mathbb{A})  =  \left\lbrace \vert  \langle  \mathbb{A} x,x\rangle\vert_{f}: x\in \Delta_{(f,\delta)}(\mathbb{A}) \right\rbrace
 =  \begin{cases}
1, & \text{if} \ 0 \leq \delta \leq 1, \\
\delta^{2}-1, &  \text{if} \ 1 < \delta <\sqrt{2}.  \nonumber
\end{cases}
\end{eqnarray}
\end{example}
\begin{definition}
\label{def7}
For any fixed $ 0\leq \alpha \leq \dfrac{\pi}{2}, $ let \[ S(\alpha):=\left\lbrace re^{i\varphi}: r\geq 0, \ -\alpha < \varphi <\alpha \right\rbrace \subset \mathbb{C}. \]
\begin{enumerate}[label=\upshape{\arabic*.}, ref=\upshape{\arabic*}, labelindent=5pt, leftmargin=*]
\item The operator $ A:\mathcal{H}\rightarrow \mathcal{H}, \ A\in \mathfrak{B}(\mathcal{H})  $ in any Hilbert space $ \mathcal{H} $ is said to be $ \alpha$-sectorial, if $ W(A)\subset S(\alpha); $
\item The class of all $ \alpha$-sectorial operators on Hilbert space $ \mathcal{H} $ with vertex in origin and half-angle $ \left[ 0,\frac{\pi}{2}\right)  $ will be denoted by $ Sec_{\alpha}(\mathcal{H}).$
\end{enumerate}
\end{definition}
It is clear that if $ A\in Sec_{\alpha}(\mathcal{H}), \ 0\leq \alpha \leq \dfrac{\pi}{2},$ then $ A $ is accretive.
Recall that an operator $ A\in \mathfrak{B}(\mathcal{H}) $
is called accretive if $ Re \langle Ax, x \rangle \geq 0  $ for all $ x\in\mathcal{H}.$
It is called dissipative if $ Im \langle Ax, x \rangle \geq 0  $ for all $ x\in\mathcal{H}$.

The main goal of this study is to generalize well-known results concerning the joint $ f$-numerical radius for $n$-tuple Hilbert space operators, as found in the existing mathematical literature, to the framework of the joint $(f,\delta)$-numerical radius. In Section \ref{Sec:2}, this work aims to provide new and diverse contributions to this area. In Section \ref{Sec:3}, novel and interesting estimates for the lower and upper bounds of the $(f,\delta)$-numerical radius function are established in the context of sectorial operators.

The results obtained in this study not only generalize and extend many well-known inequalities associated with the classical and joint numerical radius functions, but also improve several of them. Moreover, this study provides strengthened formulations and new perspectives that significantly enrich existing literature.

Throughout this article, $ f:[0,\infty)\rightarrow [0,\infty) $ denotes a continuous, strictly increasing, and surjective function.  Note that under these conditions, $ f(0)=0$. Furthermore, if $ f $ is a convex function in $ [0,\infty ), $ then for any $ 0\leq \alpha \leq 1$ and $0\leq x < \infty$, we have $ f(\alpha x )\leq \alpha f(x)$. For $ \mathbb{A}\in \mathfrak{B}^{n}(\mathcal{H})$ and $\lambda\in \mathbb{R} $, we define
\(
\Delta_{(f,\delta)}(\mathbb{A}) = \left\lbrace x\in S_{1}(\mathcal{H}): \Vert \mathbb{A}x \Vert_{f}\geq\lambda\right\rbrace.
\)
A function $f : [0,\infty) \to [0,\infty)$ is said to be \emph{geometrically convex} if
\(
f(\sqrt{xy}) \le \sqrt{f(x)f(y)}
\)
holds for all $x,y \in [0,\infty)$.

\section{Joint \texorpdfstring{$(f,\delta)$}{(f,delta)}-numerical radius of \texorpdfstring{$n$}{n}-tuples of operators}
\label{Sec:2}

In this section, we first investigate some important  properties of the joint $(f,\delta)$-numerical radius function of bounded linear $n$-tuples of operators on Hilbert spaces. Subsequently, some generalization formulas
for the lower and upper bounds of the joint $(f,\delta)$-numerical radius of bounded linear $n$-tuple of operators on Hilbert spaces are obtained.

\begin{theorem} \label{thm-fdnumradntupop}
\label{thm1}
Let $\mathbb{A} = (A_{1}, \ldots, A_{n}), \ \mathbb{B} = (B_{1}, \ldots, B_{n}) \in \mathfrak{B}^{n}(\mathcal{H})$.
\begin{enumerate}[label=\upshape{\arabic*.}, ref=\upshape{\arabic*}, labelindent=5pt, leftmargin=*,ref=\upshape{\arabic*}]
\item \label{item1-thm-fdnumradntupop} For all $ \lambda \in \mathbb{C}, \ \lambda\neq 0 $ and $ 0\leq \delta \leq \Vert \lambda \mathbb{A} \Vert_{f}, $
\[
\omega_{(f,\delta)}(\lambda \mathbb{A} )= \vert \lambda \vert \omega_{\left(f, \frac{\delta}{\vert \lambda \vert}\right) } ( \mathbb{A} ),
\]
provided that $ f $ satisfies the equality $ f(xy)=f(x)f(y) $ for $ x,y\geq 0. $

\item \label{item2-thm-fdnumradntupop} If $ f $ is geometrically convex, then for any $ 0\leq \delta \leq \Vert \mathbb{A} + \mathbb{B} \Vert_{f} ,$
\[
\omega_{(f,\delta)}(\mathbb{A} + \mathbb{B} )\leq \omega_{\left( f, \delta- \Vert \mathbb{B} \Vert_{f}\right) } (\mathbb{A})+ \omega_{\left( f, \delta- \Vert \mathbb{A} \Vert_{f}\right) }(\mathbb{B}) .
\]
In special case, for any $ \alpha \leq \max \left\lbrace \Vert \mathbb{A} \Vert_{f}, \Vert \mathbb{B} \Vert_{f} \right\rbrace, $
\[
\omega_{(f,\delta)}(\mathbb{A} + \mathbb{B} )\leq \omega_{\left( f, \delta-\alpha\right) }(\mathbb{A}) + \omega_{\left( f, \delta-\alpha\right) } (\mathbb{B}).
\]
\item \label{item3-thm-fdnumradntupop} If $\mathbb{A}$ is $n$-tuple of normal operators in $ \mathfrak{B}^{n}(\mathcal{H}),$ then for any $ 0\leq \delta \leq  \Vert \mathbb{A} \Vert_{f} $,
\[
\omega_{(f,\delta)}(\mathbb{A}) = \omega_{(f,\delta)}(\mathbb{A}^{*}).
\]
\item \label{item4-thm-fdnumradntupop} If $\mathbb{A}$ is $n$-tuple of hyponormal operators in $ \mathfrak{B}^{n}(\mathcal{H}) ,$ then for any $ 0\leq \delta \leq  \Vert \mathbb{A}^{*} \Vert_{f}$,
\[
\omega_{(f,\delta)}(\mathbb{A}^{*}) \leq \omega_{(f,\delta)}(\mathbb{A}).
\]
\end{enumerate}
\end{theorem}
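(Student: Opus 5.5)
We treat the four parts separately. In each case we compare the admissible sets $\Delta_{(f,\delta)}$ and the quantities inside the suprema of Definition~\ref{def6}.

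\textbf{Part \ref{item1-thm-fdnumradntupop}.} Multiplicativity gives $f(|\lambda|t)=f(|\lambda|)f(t)$. Hence $f^{-1}(f(|\lambda|)s)=|\lambda| f^{-1}(s)$, since $f(|\lambda| f^{-1}(s))=f(|\lambda|)s$. Consequently
\[
\Vert \lambda\mathbb{A}x\Vert_f=|\lambda|\,\Vert\mathbb{A}x\Vert_f
\quad\text{and}\quad
|\langle\lambda\mathbb{A}x,x\rangle|_f=|\lambda|\,|\langle\mathbb{A}x,x\rangle|_f .
\]
The first identity shows $\Delta_{(f,\delta)}(\lambda\mathbb{A})=\Delta_{(f,\delta/|\lambda|)}(\mathbb{A})$, and $\delta/|\lambda|\le \Vert\mathbb{A}\Vert_f$. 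Taking suprema over this common set gives the claim.

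\textbf{Part \ref{item2-thm-fdnumradntupop}.} This is the main obstacle, because we need two subadditivity facts for the $f$-aggregates, and they must come from geometric convexity. The plan is to prove first a Minkowski-type inequality
\[
f^{-1}\Bigl(\sum_m f(a_m+b_m)\Bigr)\le f^{-1}\Bigl(\sum_m f(a_m)\Bigr)+f^{-1}\Bigl(\sum_m f(b_m)\Bigr),\qquad a_m,b_m\ge 0,
\]
presumably as in \cite[Thm.~2.1]{Alomari2024}, where the subadditivity of $\omega_{(f,0)}$ is obtained under geometric convexity.

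For $x\in\Delta_{(f,\delta)}(\mathbb{A}+\mathbb{B})$ we use $\Vert(A_m+B_m)x\Vert\le\Vert A_mx\Vert+\Vert B_mx\Vert$, monotonicity of $f$, and the Minkowski inequality. This gives
\[
\delta\le\Vert\mathbb{A}x\Vert_f+\Vert\mathbb{B}x\Vert_f\le \Vert\mathbb{A}x\Vert_f+\Vert\mathbb{B}\Vert_f .
\]
So $x\in\Delta_{(f,\delta-\Vert\mathbb{B}\Vert_f)}(\mathbb{A})$, and symmetrically $x\in\Delta_{(f,\delta-\Vert\mathbb{A}\Vert_f)}(\mathbb{B})$; negative thresholds are read as $0$, as the paper allows. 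Applying the same inequality to $|\langle(A_m+B_m)x,x\rangle|\le|\langle A_mx,x\rangle|+|\langle B_mx,x\rangle|$ yields
\[
|\langle(\mathbb{A}+\mathbb{B})x,x\rangle|_f\le|\langle\mathbb{A}x,x\rangle|_f+|\langle\mathbb{B}x,x\rangle|_f .
\]
Taking the supremum over $x$ gives the inequality. If the Minkowski step does not follow directly from geometric convexity, I would fall back on citing the earlier subadditivity result for $\omega_f$ and only add the set inclusion argument.

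For the special case, take $\alpha\ge\max\{\Vert\mathbb{A}\Vert_f,\Vert\mathbb{B}\Vert_f\}$; I suspect this is the intended reading of the stated condition. Then $\delta-\alpha$ is at most both thresholds. Since the joint $(f,\delta)$-numerical radius is nonincreasing in $\delta$, the special case follows from the general one.

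\textbf{Parts \ref{item3-thm-fdnumradntupop} and \ref{item4-thm-fdnumradntupop}.} We always have $|\langle A_m^*x,x\rangle|=|\langle A_mx,x\rangle|$, so the two quantities inside the suprema agree pointwise. For normal $A_m$ we have $\Vert A_m^*x\Vert=\Vert A_mx\Vert$, hence the sets $\Delta_{(f,\delta)}$ coincide and equality follows. For hyponormal $A_m$ we have $\Vert A_m^*x\Vert\le\Vert A_mx\Vert$, hence $\Vert\mathbb{A}^*x\Vert_f\le\Vert\mathbb{A}x\Vert_f$ by monotonicity of $f$ and $f^{-1}$. This gives $\Delta_{(f,\delta)}(\mathbb{A}^*)\subseteq\Delta_{(f,\delta)}(\mathbb{A})$, and a supremum over a smaller set is smaller, so the inequality follows.
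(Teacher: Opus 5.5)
Parts 1, 3 and 4 of your proposal match the paper's proof. In Part 1 you use the scaling identity together with $\Delta_{(f,\delta)}(\lambda\mathbb{A})=\Delta_{(f,\delta/|\lambda|)}(\mathbb{A})$. In Parts 3 and 4 you use $|\langle A_m^*x,x\rangle|=|\langle A_mx,x\rangle|$ together with equality, respectively inclusion, of the sets $\Delta_{(f,\delta)}$.

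Part 2 also follows the paper's route, but the step you left open is a genuine gap. The Minkowski-type inequality
\[
f^{-1}\Bigl(\sum_m f(a_m+b_m)\Bigr)\le f^{-1}\Bigl(\sum_m f(a_m)\Bigr)+f^{-1}\Bigl(\sum_m f(b_m)\Bigr)
\]
does not follow from geometric convexity. Take $f(t)=\sqrt{t}$, which satisfies $f(\sqrt{xy})=\sqrt{f(x)f(y)}$ with equality, and take $n=2$, $a=(1,0)$, $b=(0,1)$. The left side is $f^{-1}(2)=4$, while the right side is $2$.

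Your fallback of citing the earlier subadditivity result does not help either, because Part 2 itself fails for this $f$. On $\mathcal{H}=\mathbb{C}$ take $\mathbb{A}=(I,0)$, $\mathbb{B}=(0,I)$ and $\delta=0$. Then $\omega_{(f,0)}(\mathbb{A}+\mathbb{B})=f^{-1}(2)=4$, whereas $\omega_{(f,0)}(\mathbb{A})+\omega_{(f,0)}(\mathbb{B})=2$.

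The paper fills exactly this step by citing Mulholland's inequality. That result needs $f$ to be convex \emph{and} $t\mapsto\log f(e^t)$ to be convex, the latter being geometric convexity. The statement only assumes the second condition, so what is missing is the extra hypothesis that $f$ is convex. Once it is added, Mulholland's theorem supplies your Minkowski step. Your argument is then complete and coincides with the paper's:
\begin{itemize}
\item the triangle inequality for $\Vert A_mx\Vert$ and for $|\langle A_mx,x\rangle|$;
\item the two inclusions $\Delta_{(f,\delta)}(\mathbb{A}+\mathbb{B})\subset\Delta_{(f,\delta-\Vert\mathbb{B}\Vert_f)}(\mathbb{A})\cap\Delta_{(f,\delta-\Vert\mathbb{A}\Vert_f)}(\mathbb{B})$;
\item taking the supremum.
\end{itemize}

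On the special case, you are right to reverse the condition to $\alpha\ge\max\{\Vert\mathbb{A}\Vert_f,\Vert\mathbb{B}\Vert_f\}$. It then follows from the general inequality by monotonicity in $\delta$, as you say. The paper's proof does not treat the special case at all.

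As literally stated, with $\alpha\le\max$, it is false even for $n=1$ and $f(t)=t$. Take $A_1=\begin{pmatrix}0&2\\0&0\end{pmatrix}$, $B_1=\begin{pmatrix}0&0\\2&0\end{pmatrix}$, $\delta=2$ and $\alpha=0$. Then $\omega_{(f,2)}(A_1+B_1)=2$, while $\omega_{(f,2)}(A_1)=\omega_{(f,2)}(B_1)=0$.
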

\begin{proof}
\noindent \ref{item1-thm-fdnumradntupop}.\ For $ \lambda\in \mathbb{C}, \ \lambda\neq 0 $ and $ x\in \mathcal{H}, $ since $ f $ satisfies $ f(xy)=f(x)f(y) $, then
\begin{align}
& \vert \langle \lambda \mathbb{A}x,x \rangle\vert_{f}  =  f^{-1}  \left( \sum\limits_{m=1}^{n}f\left( \vert \langle\lambda A_{m}x,x\rangle \vert \right)  \right) \nonumber \\
& \quad =  f^{-1}  \left( \sum\limits_{m=1}^{n}f\left( \vert \lambda \vert\vert \langle A_{m}x,x\rangle \vert \right)  \right)
  =  f^{-1}  \left( \sum\limits_{m=1}^{n}f\left( \vert \lambda \vert\right)f\left(  \vert \langle A_{m}x,x\rangle \vert \right)  \right) \nonumber\\
& \quad =  \vert \lambda \vert f^{-1}  \left( \sum\limits_{m=1}^{n}f\left( \vert \langle A_{m}x,x\rangle \vert \right)  \right)
  =  \vert \lambda \vert \vert \langle \mathbb{A}x,x \rangle\vert_{f}. \label{equ3}
\end{align}
Since for any $ x\in \Delta_{(f,\delta )}(\lambda \mathbb{A} ), \ \lambda \in \mathbb{C}\setminus \lbrace 0 \rbrace  $ the following equality holds
\(
\Vert \lambda  \mathbb{A}x  \Vert_{f} = \vert \lambda \vert \Vert \mathbb{A}x \Vert_{f},
\)
it follows that
\begin{equation}
\label{equ4}
\Delta_{(f,\delta)}(\lambda \mathbb{A})=\Delta_{\left(f, \frac{\delta}{\vert \lambda \vert}\right) } ( \mathbb{A} ).
\end{equation}
Therefore, \eqref{equ3} and \eqref{equ4} imply
\(
\omega_{(f,\delta)}(\lambda \mathbb{A} )= \vert \lambda \vert \omega_{\left(f, \frac{\delta}{\vert \lambda \vert}\right) } ( \mathbb{A} )
\)
for $ 0\leq \delta \leq \Vert \lambda \mathbb{A} \Vert_{f}. $

\noindent \ref{item2-thm-fdnumradntupop}.\ Since $ f $ monotonically increasing,
\begin{multline*}
\sum\limits_{m=1}^{n}f\left( \vert \langle (A_{m}+B_{m})x,x\rangle \vert \right)  =  \sum\limits_{m=1}^{n}f\left( \vert \langle A_{m}x,x\rangle + \langle B_{m}x,x\rangle \vert \right)  \\
\leq  \sum\limits_{m=1}^{n}f\left( \vert \langle A_{m}x,x\rangle\vert + \vert\langle B_{m}x,x\rangle \vert \right).
\end{multline*}
Also, from \cite{Mulholland1950} we obtain
\begin{multline*}
f^{-1} \left(  \sum\limits_{m=1}^{n}f\left( \vert \langle A_{m}x,x\rangle\vert + \vert\langle B_{m}x,x\rangle \vert \right) \right) \leq f^{-1} \left(  \sum\limits_{m=1}^{n}f\left( \vert \langle A_{m}x,x\rangle\vert \right) \right) \\
+ f^{-1} \left(  \sum\limits_{m=1}^{n}f\left( \vert \langle B_{m}x,x\rangle\vert \right) \right).
\end{multline*}
Consequently, one can easily see that
\begin{eqnarray}
\label{equ5}
\begin{array}{r}
f^{-1} \left( \sum\limits_{m=1}^{n}f\left( \vert \langle (A_{m}+B_{m})x,x\rangle \vert \right)  \right) \leq f^{-1} \left(  \sum\limits_{m=1}^{n}f\left( \vert \langle A_{m}x,x\rangle\vert \right) \right) \\
+ f^{-1} \left(  \sum\limits_{m=1}^{n}f\left( \vert \langle B_{m}x,x\rangle\vert \right) \right).
\end{array}
\end{eqnarray}
On the other hand, it is clear that for any $x \in S_{1}(\mathcal{H})$,
\[
\sum\limits_{m=1}^n f\left( \Vert (A_m + B_m)x \Vert \right) \leq
\sum\limits_{m=1}^n f\left(  \Vert A_m x\Vert + \Vert B_m x\Vert \right) .
\]
Hence, by \cite[Corollary 1.1]{Mulholland1950}, one obtains that
\begin{multline*}
f^{-1}\left(
\sum\limits_{m=1}^n f\left( \Vert(A_m + B_m)x\Vert\right)
\right)
\leq
f^{-1}\left(
\sum\limits_{m=1}^n f\left(  \Vert A_m x\Vert \right)
\right) \\
 \quad +
f^{-1}\left(
\sum\limits_{m=1}^n f\left( \Vert B_m x\Vert\right)
\right) \leq
f^{-1}\left(
\sum\limits_{m=1}^n f\left(  \Vert A_m x\Vert \right)
\right)
+
\Vert \mathbb{B} \Vert_{f},
\end{multline*}
that is,
\(
\Vert (\mathbb{A}+\mathbb{B})x\Vert_f
\leq
\Vert \mathbb{A}x\Vert_f + \Vert \mathbb{B}\Vert_f.
\)
From the last inequality, for $x \in \Delta_{\delta}({\mathbb{A}+\mathbb{B}})$, one can easily see that
\[
\delta \leq \Vert (\mathbb{A}+\mathbb{B})x\Vert_f \leq \Vert \mathbb{A}x\Vert_f + \Vert \mathbb{B}\Vert_f.
\]
that is,
\(
\delta - \Vert \mathbb{B} \Vert_{f} \leq \Vert (\mathbb{A}+\mathbb{B})x\Vert_{f}.
\)
Similarly, it can be shown that
\(
\delta - \Vert \mathbb{A} \Vert_{f} \leq \Vert (\mathbb{A}+\mathbb{B})x \Vert_{f}
\)
for $x \in \Delta_{\delta}({\mathbb{A}+\mathbb{B}}).$
Thus,
\begin{equation}
\label{equ6}
\Delta_{(f,\delta)}(\mathbb{A}+\mathbb{B})
\subset
\Delta_{(f,\delta-\Vert \mathbb{B} \Vert_{f})}(\mathbb{A})
\cap
\Delta_{(f,\delta-\Vert \mathbb{A} \Vert_{f})}(\mathbb{B}).
\end{equation}
Consequently, from the relations \eqref{equ5} and \eqref{equ6}, it follows that
\[
\omega_{(f,\delta)}(\mathbb{A}+\mathbb{B})
\le
\omega_{(f,\delta-\Vert \mathbb{B} \Vert_{f})}(\mathbb{A})
+
\omega_{(f,\delta-\Vert \mathbb{A} \Vert_{f})}(\mathbb{B}),
\]
for any $ 0 \leq \delta \leq \Vert \mathbb{A} + \mathbb{B} \Vert_{f}$.

\noindent \ref{item3-thm-fdnumradntupop}.\ If $\mathbb{A}  \in \mathfrak{B}^{n}(\mathcal{H})$ is an $n$-tuple of operators, then for $x \in \mathcal{H}$,
\begin{equation}
\label{equ7}
\begin{array}{l}
\vert\langle \mathbb{A}x, x \rangle\vert_{f}
= f^{-1}\left( \sum\limits_{m=1}^{n} f\left(  \vert\langle A_m x, x \rangle\vert \right)  \right) \\
= f^{-1}\left( \sum\limits_{m=1}^{n} f\left(  \vert\langle A_m^{*} x, x \rangle\vert \right)  \right)
= \vert\langle \mathbb{A}^{*} x, x \rangle\vert_{f}.
\end{array}
\end{equation}
On the other hand, since $ A_{m}, \ 1\leq m \leq n $ is normal operator in $ \mathcal{H}, $ then
for $0 \leq \delta \leq \Vert \mathbb{A} \Vert_{f}$, it is clear that
\[
\delta \leq \Vert \mathbb{A} x \Vert_{f}
= f^{-1}\left( \sum\limits_{m=1}^{n} f\left(  \Vert A_m x \Vert \right)  \right)
= f^{-1}\left( \sum\limits_{m=1}^{n} f\left(  \Vert A_m^{*} x\Vert \right)  \right)
= \Vert A x\Vert_{f}.
\]
Then, for $0 \leq \delta \leq \Vert \mathbb{A} \Vert_{f}$, it is true that
\begin{equation}
\label{equ8}
\Delta_{(f,\delta)}(\mathbb{A})
=
\Delta_{(f,\delta)}(\mathbb{A}^{*}).
\end{equation}
Therefore, from \eqref{equ7} and \eqref{equ8}, it follows that
\(
\omega_{(f,\delta)}(\mathbb{A})
=
\omega_{(f,\delta)}(\mathbb{A}^{*}).
\)

\noindent \ref{item4-thm-fdnumradntupop}.\ Since $ \mathbb{A} $ is hyponormal operator, then for
$ 0 \leq \delta \leq \Vert A^{*}\Vert_{f}, $
\[
\delta \leq \Vert \mathbb{A}^{*}x\Vert_{f} \leq \Vert \mathbb{A}x\Vert_{f}, \
x \in S_{1}(\mathcal{H}),
\]
that is,
\begin{equation}
\label{equ9}
\Delta_{(f,\delta)}(\mathbb{A}^{*})
\subset
\Delta_{(f,\delta)}(\mathbb{A}).
\end{equation}
Thus, \eqref{equ7} and \eqref{equ9} yield
\( \omega_{(f,\delta)}(\mathbb{A}^{*}) \leq \omega_{(f,\delta)}(\mathbb{A}).   \)
\end{proof}

If we take $f(t)=t^{2}$, $0 \leq t < \infty $, and $\delta = 0$ in the statement \ref{item2-thm-fdnumradntupop} of Theorem \ref{thm1}, we obtain the following corollary, which was proved in \cite{Jana2024}.

\begin{corollary}
\label{cor1}
Let $\mathbb{A}, \ \mathbb{B} \in \mathfrak{B}^{n}(\mathcal{H})$. Then,
\[
\omega_{(t^{2},0)}(\mathbb{A}+\mathbb{B})
\leq
\omega_{(t^{2},0)}(\mathbb{A})
+
\omega_{(t^{2},0)}(\mathbb{B}).
\]
\end{corollary}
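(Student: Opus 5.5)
The corollary is meant to be a direct specialization of part \ref{item2-thm-fdnumradntupop} of Theorem \ref{thm1}, so the plan is to verify the hypotheses for $f(t)=t^{2}$ and $\delta=0$, and then check that the shifted parameters collapse back to $0$.

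\textbf{Step 1: hypotheses on $f$.} The function $f(t)=t^{2}$ is continuous, strictly increasing and surjective on $[0,\infty)$. It is geometrically convex with equality, since
\[
f(\sqrt{xy})=xy=\sqrt{x^{2}y^{2}}=\sqrt{f(x)f(y)}.
\]
It is also convex, which is the condition behind the Mulholland inequality invoked in the proof. In fact, for $f(t)=t^{2}$ that inequality is just Minkowski's inequality in $\ell^{2}$:
\[
\Bigl(\sum_{m}(a_m+b_m)^{2}\Bigr)^{1/2}\le \Bigl(\sum_{m} a_m^{2}\Bigr)^{1/2}+\Bigl(\sum_{m} b_m^{2}\Bigr)^{1/2},
\]
so it holds unconditionally here.

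\textbf{Step 2: apply the theorem with $\delta=0$.} Part \ref{item2-thm-fdnumradntupop} gives
\[
\omega_{(t^{2},0)}(\mathbb{A}+\mathbb{B})\le \omega_{(t^{2},-\Vert\mathbb{B}\Vert_{f})}(\mathbb{A})+\omega_{(t^{2},-\Vert\mathbb{A}\Vert_{f})}(\mathbb{B}).
\]

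\textbf{Step 3: remove the negative parameters.} This is the only point needing care. Both shifted parameters are $\le 0$, and the paper notes that for $\delta\le 0$ one has $\omega_{(f,\delta)}=\omega_{(f,0)}$. The reason is that $\Vert\mathbb{A}x\Vert_{f}\ge 0\ge\delta$ for every $x$, so
\[
\Delta_{(f,\delta)}(\mathbb{A})=S_{1}(\mathcal{H})=\Delta_{(f,0)}(\mathbb{A}).
\]
Substituting this into Step 2 yields the claimed inequality.

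\textbf{Fallback.} As an alternative, or as a sanity check, one can argue directly, since with $\delta=0$ the constraint sets are the whole unit sphere. For each $x\in S_{1}(\mathcal{H})$, the triangle inequality in $\mathbb{C}$ followed by Minkowski's inequality gives
\[
\vert\langle(\mathbb{A}+\mathbb{B})x,x\rangle\vert_{f}\le\vert\langle\mathbb{A}x,x\rangle\vert_{f}+\vert\langle\mathbb{B}x,x\rangle\vert_{f}.
\]
Taking the supremum over $x\in S_{1}(\mathcal{H})$ then gives the result.
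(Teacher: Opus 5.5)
Your proposal is correct and follows the paper's own route: the paper obtains this corollary by setting $f(t)=t^{2}$ and $\delta=0$ in part \ref{item2-thm-fdnumradntupop} of Theorem \ref{thm1}. Your Step 3 makes explicit a point the paper leaves unstated: the shifted parameters $-\Vert\mathbb{B}\Vert_{f}$ and $-\Vert\mathbb{A}\Vert_{f}$ are nonpositive, so the corresponding sets $\Delta_{(f,\cdot)}$ are all of $S_{1}(\mathcal{H})$. Your fallback argument is exactly inequality \eqref{equ5} from the theorem's proof, with the supremum taken over the whole unit sphere.
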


If we take $n=1$ in Corollary \ref{cor1}, then Corollary \ref{cor1} coincides with the following well-known result.
\begin{corollary}
\label{cor2}
Let $A_{1}, \ B_{1} \in \mathfrak{B}(\mathcal{H})$. Then,
\[
\omega(A_{1}+B_{1})
\leq
\omega(A_{1})
+
\omega(B_{1}).
\]
\end{corollary}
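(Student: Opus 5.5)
The plan is to derive Corollary \ref{cor2} directly from Corollary \ref{cor1} by specializing to $n=1$, checking only that $\omega_{(t^{2},0)}$ reduces to the classical numerical radius for a single operator. Take $f(t)=t^{2}$ on $[0,\infty)$. This $f$ is continuous, strictly increasing and surjective, with $f^{-1}(s)=\sqrt{s}$. For $n=1$ and any $x\in S_{1}(\mathcal{H})$ we get
\[
\vert\langle A_{1}x,x\rangle\vert_{f}=\sqrt{\vert\langle A_{1}x,x\rangle\vert^{2}}=\vert\langle A_{1}x,x\rangle\vert .
\]
With $\delta=0$, the constraint set $\Delta_{(f,0)}(A_{1})$ is all of $S_{1}(\mathcal{H})$, because $\Vert A_{1}x\Vert_{f}=\Vert A_{1}x\Vert\geq 0$ always holds. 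Hence $\omega_{(t^{2},0)}(A_{1})=\omega(A_{1})$, and likewise for $B_{1}$ and $A_{1}+B_{1}$.

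Substituting these identifications into Corollary \ref{cor1} with $n=1$ gives $\omega(A_{1}+B_{1})\le\omega(A_{1})+\omega(B_{1})$ at once.

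The only point needing care is the justification of Corollary \ref{cor1} itself. It rests on item \ref{item2-thm-fdnumradntupop} of Theorem \ref{thm1}, which requires $f$ to be geometrically convex. For $f(t)=t^{2}$ we have $f(\sqrt{xy})=xy=\sqrt{f(x)f(y)}$, so the condition holds with equality. Also, the shifted parameters $\delta-\Vert\mathbb{B}\Vert_{f}$ and $\delta-\Vert\mathbb{A}\Vert_{f}$ are $\le 0$ when $\delta=0$, and the paper notes that nonpositive $\delta$ gives the same value as $\delta=0$.

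Alternatively, the result can be checked without the general machinery. For each unit vector $x$, the triangle inequality in $\mathbb{C}$ gives
\[
\vert\langle (A_{1}+B_{1})x,x\rangle\vert\le\vert\langle A_{1}x,x\rangle\vert+\vert\langle B_{1}x,x\rangle\vert\le\omega(A_{1})+\omega(B_{1}).
\]
Taking the supremum over $x\in S_{1}(\mathcal{H})$ finishes the proof.
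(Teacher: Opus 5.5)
Your proposal is correct and follows the paper's route: the paper obtains Corollary~\ref{cor2} simply by setting $n=1$ in Corollary~\ref{cor1}. Your check that $\omega_{(t^{2},0)}$ reduces to $\omega$ for a single operator, together with your remarks on the geometric convexity of $t^{2}$ and on the nonpositive shifted parameters, just makes that specialization explicit, and your closing triangle-inequality argument is a valid self-contained proof that avoids relying on Theorem~\ref{thm1} and Mulholland's inequality.
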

\begin{remark}
Theorem \ref{thm1} generalizes \textnormal{\cite[Proposition 2.2]{Ismailov2026}}.
\end{remark}

\begin{theorem}
\label{thm2}
Let $ f $ be geometrically convex function,  $
\mathbb{A}\in \mathfrak{B}^{n}(\mathcal{H}) $ be $n$-tuple of operators and $ \ 0\leq \delta \leq \Vert \mathbb{A} \Vert_{f}.
$ Then
\[
f^{2}\left( \omega_{(f,\delta)}(\mathbb{A} )\right)
\leq
\sqrt{n}f(1) f\left(
\omega_{\left( f, \frac{f^{2}(\delta )}{nf(1)} \right) }(\mathbb{A}^{*}\mathbb{A}) \right) .
\]
\end{theorem}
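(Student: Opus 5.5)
The plan is to compare, for each unit vector $x$, the two quantities $\vert\langle\mathbb{A}x,x\rangle\vert_f$ and $\vert\langle\mathbb{A}^{*}\mathbb{A}x,x\rangle\vert_f$. Here $\mathbb{A}^{*}\mathbb{A}=(A_1^{*}A_1,\ldots,A_n^{*}A_n)$. The only analytic input is geometric convexity in the form $f(\sqrt{t})=f(\sqrt{t\cdot 1})\le\sqrt{f(t)f(1)}$, that is, $f^{2}(\sqrt t)\le f(1)f(t)$ for $t\ge 0$.

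\textbf{Step 1 (pointwise bound).} Fix $x\in S_1(\mathcal H)$. Put $a_m=\vert\langle A_mx,x\rangle\vert$ and $b_m=\langle A_m^{*}A_mx,x\rangle=\Vert A_mx\Vert^{2}$. Cauchy--Schwarz gives $a_m\le \sqrt{b_m}$. Monotonicity of $f$ and geometric convexity then give $f^{2}(a_m)\le f(1)f(b_m)$.

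\textbf{Step 2 (summing).} By definition $f(\vert\langle\mathbb{A}x,x\rangle\vert_f)=\sum_m f(a_m)$ and $f(\vert\langle\mathbb{A}^{*}\mathbb{A}x,x\rangle\vert_f)=\sum_m f(b_m)$. Applying Cauchy--Schwarz in $\mathbb{R}^n$ to $\sum_m f(a_m)$, and then Step 1, bounds the sum by a power of $n$ times $\bigl(f(1)\sum_m f(b_m)\bigr)^{1/2}$. Squaring yields $f^{2}(\vert\langle\mathbb{A}x,x\rangle\vert_f)$ bounded by that constant times $f(1)\,f(\vert\langle\mathbb{A}^{*}\mathbb{A}x,x\rangle\vert_f)$. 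The prefactor must be the paper's $\sqrt n f(1)$. The naive Cauchy--Schwarz route produces $n f(1)$, so this is the first place needing care.

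\textbf{Step 3 (the constraint sets).} Suppose $x\in\Delta_{(f,\delta)}(\mathbb{A})$, i.e.\ $\sum_m f(\Vert A_mx\Vert)\ge f(\delta)$. Note that $\Vert A_m^{*}A_mx\Vert\ge\langle A_m^{*}A_mx,x\rangle=\Vert A_mx\Vert^{2}$. Geometric convexity then gives $f^{2}(\Vert A_mx\Vert)\le f(1)f(\Vert A_m^{*}A_mx\Vert)$. The same summation as in Step 2 turns $f(\delta)^2\le(\sum_m f(\Vert A_mx\Vert))^2$ into a lower bound for $f(\Vert\mathbb{A}^{*}\mathbb{A}x\Vert_f)$. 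This places $x$ in a set $\Delta_{(f,\delta')}(\mathbb{A}^{*}\mathbb{A})$ for a threshold $\delta'$ built from $f^{2}(\delta)/(nf(1))$. Since $\omega_{(f,\cdot)}$ is nonincreasing in its threshold, taking the supremum over $x$ then gives the theorem.

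\textbf{Main obstacle.} The hard part is matching the exact constants as the statement is worded. There are two issues: the factor $\sqrt n$ rather than $n$, and the threshold $f^{2}(\delta)/(nf(1))$ without an $f^{-1}$. The direct argument gives $\Vert\mathbb{A}^{*}\mathbb{A}x\Vert_f\ge f^{-1}\bigl(f^{2}(\delta)/(nf(1))\bigr)$.

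\emph{Threshold.} My first attempt is to check whether one may read $f^{2}$ as the composition $f\circ f$. If not, I would look for an additional inequality of the form $f^{-1}(s)\ge s$ on the relevant range, so that by monotonicity in $\delta$ the stated threshold is at most the obtained one and the constraint-set inclusion suffices.

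\emph{Constant.} For the $\sqrt n$, I would try to avoid squaring the full Cauchy--Schwarz estimate. The idea is to bound $(\sum_m f(a_m))^{2}$ by $\sqrt n\,\sum_m f(a_m)^{2}$ using that each $f(a_m)\le f(\vert\langle\mathbb{A}x,x\rangle\vert_f)$. If that fails, I would identify exactly which extra hypothesis on $f$ (for instance a normalization like $f(1)\le 1$, or submultiplicativity as in Theorem~\ref{thm1}) makes the $\sqrt n$ version hold, and prove it under that hypothesis.
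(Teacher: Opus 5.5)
Your Steps 1--3 are the paper's own argument, and they are correct: Cauchy--Schwarz $\vert\langle A_mx,x\rangle\vert\le\Vert A_mx\Vert=\langle A_m^{*}A_mx,x\rangle^{1/2}$, geometric convexity with second argument $1$, Cauchy--Schwarz in $\mathbb{R}^n$, and $\langle A_m^{*}A_mx,x\rangle\le\Vert A_m^{*}A_mx\Vert$ for the constraint sets. But what these steps prove is
\[
f^{2}\bigl(\omega_{(f,\delta)}(\mathbb{A})\bigr)\le n f(1)\, f\bigl(\omega_{(f,\delta')}(\mathbb{A}^{*}\mathbb{A})\bigr),\qquad \delta'=f^{-1}\Bigl(\frac{f^{2}(\delta)}{nf(1)}\Bigr).
\]
The paper's proof yields exactly this as well. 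Its chain \eqref{equ10} ends in $\sqrt{nf(1)}\,f^{1/2}(\cdot)$, which squares to $nf(1)$, not $\sqrt n\, f(1)$. The inclusion \eqref{equ11} drops the $f^{-1}$ that the derivation just above it produces; compare the proof of Theorem \ref{thm7}, where the $f^{-1}$ is kept. So there is no hidden trick you are missing.

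The repairs in your ``Main obstacle'' paragraph cannot succeed, because the statement as worded is false. Take $f(t)=t$, which is geometrically convex and multiplicative with $f(1)=1$, and take $\mathbb{A}=(I,\ldots,I)$ and $\delta=0$. Then $\mathbb{A}^{*}\mathbb{A}=\mathbb{A}$ and $\omega_{(f,0)}(\mathbb{A})=n$. The left side is $n^{2}$ and the right side is $\sqrt n\cdot n$, so the claim fails for every $n\ge 2$. With $f(t)=t^{2}$ you get the same $n^{2}$ versus $n^{3/2}$, under either reading of $f^{2}$.

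This defeats each of your proposed fixes. Your bound $(\sum_m f(a_m))^{2}\le\sqrt n\sum_m f(a_m)^{2}$ already fails when all $a_m$ are equal. The extra hypotheses you suggest ($f(1)\le1$, multiplicativity) hold for this $f$, so they do not help. The same example gives equality with the constant $nf(1)$, so $nf(1)$ is sharp.

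The threshold cannot be rescued either. The inequality $f^{-1}(s)\ge s$ fails for $f(t)=t^{2}$ and $s>1$. Concretely, take $n=1$, $\mathbb{A}=(cI)$ with $c>1$, and $\delta=c=\Vert\mathbb{A}\Vert_f$. The stated threshold is $f^{2}(c)/f(1)=c^{4}$, which exceeds $\Vert\mathbb{A}^{*}\mathbb{A}\Vert_f=c^{2}$. So the constraint set on the right is empty. By contrast $\delta'=c^{2}$ is admissible and gives equality.

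The right conclusion is to state and prove the corrected inequality displayed above, which your Steps 1--3 already establish.
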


\begin{proof}
Since $ f $ is geometrically convex function, then for any $x\in S_{1}(\mathcal{H})$,
\begin{multline}
\label{equ10}
f\left( \vert\langle \mathbb{A} x,x\rangle \vert_{f} \right)
=
\sum\limits_{m=1}^n
f\left( \vert \langle A_m x,x\rangle \vert \right) \leq
\sum\limits_{m=1}^n
f\left( \Vert A_m x\Vert \right)  \\
=
\sum\limits_{m=1}^n
f\left( \langle A_m^*A_m x,x\rangle^{1/2} \right) \leq \sqrt{f(1)}
\sum\limits_{m=1}^n
 f^{1/2}\left( \langle A_m^*A_m x,x\rangle\right)  \\
\leq
\sqrt{nf(1)}\left( \sum\limits_{m=1}^n
f\left(  \langle A_m^*A_m x,x\rangle\right)  \right) ^{1/2} =
\sqrt{nf(1)}\left(
f\left(  \langle A^{*}A x,x\rangle\right)  \right) ^{1/2}.
\end{multline}
Since $ f $ is geometrically convex function, for any $ x \in \Delta_{(f,\delta)}(\mathbb{A}), \ 0 \leq \delta \leq \Vert \mathbb{A} \Vert_f, $ it holds
\begin{align*}
f(\delta)
&\leq f(\Vert\mathbb{A} x\Vert_f) = \sum\limits_{m=1}^n f(\Vert A_m x\Vert) \\
&= \sum\limits_{m=1}^n f\left( \langle A_m^* A_m x, x\rangle^{1/2} \right) \leq \sum\limits_{m=1}^n f\left( \Vert A_m^* A_m x\Vert^{1/2} \right)  \\
&\leq \sqrt{f(1)}\sum\limits_{m=1}^n  f^{1/2} \left( \Vert A_m^* A_m x\Vert\right) \leq \sqrt{nf(1)}\left( \sum\limits_{m=1}^n  f \left( \Vert A_m^* A_m x\Vert\right)  \right)^{1/2}  \\
&= \sqrt{n f(1)}f^{1/2}(\Vert \mathbb{A}^* \mathbb{A} x\Vert_f) .
\end{align*}
Hence,
\(
\dfrac{f^2(\delta)}{n f(1)} \leq f(\Vert \mathbb{A}^* \mathbb{A} x\Vert_f),
\)
that is,
\begin{equation}
\label{equ11}
\Delta_{(f,\delta)}(\mathbb{A})
\subset
\Delta_{\left( f,\frac{f^2(\delta)}{n f(1)}\right) }(\mathbb{A}^*\mathbb{A}).
\end{equation}
Therefore, from \eqref{equ10} and \eqref{equ11} it follows that
\[
f^{2}\left( \omega_{(f,\delta)}(\mathbb{A} )\right)
\leq
\sqrt{n}f(1) f\left(
\omega_{\left( f, \frac{f^{2}(\delta )}{nf(1)} \right) }(\mathbb{A}^{*}\mathbb{A}) \right) . \tag*{\qedhere}
\]
\end{proof}

We now provide several useful auxiliary lemmas.
\begin{lemma}
\label{lem1}
Let $\mathbb{A}\in \mathfrak{B}^{n}(\mathcal{H})$ be $n$-tuple of operators. Then,
\(
\Vert \mathbb{A}\Vert_{f} = \Vert \ \vert  \mathbb{A} \vert \  \Vert_f.
\)
\end{lemma}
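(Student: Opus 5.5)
The plan is to prove the equality pointwise, for each $x\in\mathcal{H}$, and then take the supremum over $x\in S_{1}(\mathcal{H})$. By Definition \ref{def1}, $\vert\mathbb{A}\vert=(\vert A_{1}\vert,\ldots,\vert A_{n}\vert)$ with $\vert A_{m}\vert=(A_{m}^{*}A_{m})^{1/2}$. By Definition \ref{def5},
\[
\Vert \vert\mathbb{A}\vert x\Vert_{f}=f^{-1}\Bigl(\sum\limits_{m=1}^{n}f\bigl(\Vert \vert A_{m}\vert x\Vert\bigr)\Bigr)
\quad\text{and}\quad
\Vert \mathbb{A}x\Vert_{f}=f^{-1}\Bigl(\sum\limits_{m=1}^{n}f\bigl(\Vert A_{m}x\Vert\bigr)\Bigr).
\]
It therefore suffices to show that $\Vert \vert A_{m}\vert x\Vert=\Vert A_{m}x\Vert$ for every $m$ and every $x$.

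This is the standard polar-decomposition identity. Since $\vert A_{m}\vert$ is a positive, hence self-adjoint, operator whose square is $A_{m}^{*}A_{m}$, we have
\[
\Vert \vert A_{m}\vert x\Vert^{2}=\langle \vert A_{m}\vert^{2}x,x\rangle=\langle A_{m}^{*}A_{m}x,x\rangle=\Vert A_{m}x\Vert^{2}.
\]
Taking nonnegative square roots gives the componentwise equality. Applying $f$ to each term, summing over $m$, and applying $f^{-1}$ then yields $\Vert \vert\mathbb{A}\vert x\Vert_{f}=\Vert \mathbb{A}x\Vert_{f}$ for all $x\in\mathcal{H}$. This step uses only that $f$ is a function, and that $f^{-1}$ is well defined because $f$ is a strictly increasing continuous surjection of $[0,\infty)$.

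Finally, we take the supremum over $x\in S_{1}(\mathcal{H})$, following item 2 of Definition \ref{def5}. Since the two functions of $x$ being maximized coincide, their suprema coincide, so $\Vert\mathbb{A}\Vert_{f}=\Vert\,\vert\mathbb{A}\vert\,\Vert_{f}$. The only step that needs any justification is the use of self-adjointness of $\vert A_{m}\vert$ in the identity $\Vert \vert A_{m}\vert x\Vert^{2}=\langle \vert A_{m}\vert^{2}x,x\rangle$, which follows from the continuous functional calculus for the positive operator $A_{m}^{*}A_{m}$. No convexity or multiplicativity assumption on $f$ is needed. The same pointwise identity also gives $\Delta_{(f,\delta)}(\mathbb{A})=\Delta_{(f,\delta)}(\vert\mathbb{A}\vert)$ and $\Vert\mathbb{A}\Vert_{(f,\delta)}=\Vert\,\vert\mathbb{A}\vert\,\Vert_{(f,\delta)}$, which may be useful later in the paper.
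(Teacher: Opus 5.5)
Your proof is correct and takes the same route as the paper. The paper likewise establishes the pointwise identity $f(\Vert \mathbb{A}x\Vert_f)=\sum_{m=1}^{n} f(\Vert\,\vert A_m\vert x\Vert)=f(\Vert\,\vert\mathbb{A}\vert x\Vert_f)$ via $\Vert A_m x\Vert^2=\langle A_m^*A_m x,x\rangle=\Vert\,\vert A_m\vert x\Vert^2$, and then takes the supremum over $S_1(\mathcal{H})$.
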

\begin{proof} For any $ x\in \mathcal{H}, $ it is clear that
\begin{multline}
\label{equ12}
 f\left( \Vert \mathbb{A} x \Vert_{f} \right) = \sum\limits_{m=1}^{n}f\left( \Vert A_{m}x \Vert \right)  = \sum\limits_{m=1}^{n}f\left( \langle A_{m}x,A_{m}x\rangle^{1/2} \right)  \\
 = \sum\limits_{m=1}^{n}f\left( \langle A_{m}^{*}A_{m}x,x\rangle^{1/2} \right)=\sum\limits_{m=1}^{n}f\left( \langle \vert A_{m} \vert^{2}x,x \rangle ^{1/2}  \right)  \\
 =\sum\limits_{m=1}^{n}f\left( \Vert \ \vert A_{m} \vert x  \Vert \right) =f\left( \Vert \ \vert \mathbb{A} \vert x  \Vert_{f} \right).
\end{multline}
Taking supremum over all $ x\in S_{1}(\mathcal{H}) $ in \eqref{equ12} yields
\(\Vert \mathbb{A}\Vert_{f} = \Vert \ \vert  \mathbb{A} \vert \  \Vert_f.\)
\end{proof}

\begin{lemma}
\label{lem2}
Let $\mathbb{A}\in \mathfrak{B}^{n}(\mathcal{H})$ be $n$-tuple normal operator. Then,
\[
\Vert \mathbb{A}^{*}\Vert_{f} = \Vert \mathbb{A} \Vert_f.
\]
\end{lemma}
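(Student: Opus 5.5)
The plan is to argue pointwise in $x$, as in the proof of Lemma \ref{lem1}, and then take suprema over the unit sphere. The key fact is that each component $A_m$ is normal, so $A_m^*A_m = A_mA_m^*$ for every $1\le m\le n$.

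Fix $x\in\mathcal{H}$ and compute, for each $m$,
\[
\Vert A_m^{*}x\Vert^{2}=\langle A_mA_m^{*}x,x\rangle=\langle A_m^{*}A_mx,x\rangle=\Vert A_mx\Vert^{2},
\]
so $\Vert A_m^*x\Vert=\Vert A_mx\Vert$. Apply $f$ to each term and sum over $m$:
\[
f\left(\Vert \mathbb{A}^{*}x\Vert_{f}\right)=\sum\limits_{m=1}^{n}f\left(\Vert A_m^{*}x\Vert\right)=\sum\limits_{m=1}^{n}f\left(\Vert A_mx\Vert\right)=f\left(\Vert \mathbb{A}x\Vert_{f}\right).
\]
Since $f$ is strictly increasing and surjective, $f^{-1}$ exists on $[0,\infty)$. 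Applying it gives $\Vert\mathbb{A}^*x\Vert_f=\Vert\mathbb{A}x\Vert_f$ for every $x\in\mathcal{H}$.

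Taking the supremum over $x\in S_1(\mathcal{H})$ on both sides then gives $\Vert\mathbb{A}^*\Vert_f=\Vert\mathbb{A}\Vert_f$ directly from Definition \ref{def5}.

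There is no real obstacle here. The only point to state carefully is that normality is needed componentwise. Hyponormality would give only $\Vert A_m^*x\Vert\le\Vert A_mx\Vert$, and hence, by monotonicity of $f$ and $f^{-1}$, only the inequality $\Vert\mathbb{A}^*\Vert_f\le\Vert\mathbb{A}\Vert_f$. This is the same mechanism already used in parts \ref{item3-thm-fdnumradntupop} and \ref{item4-thm-fdnumradntupop} of Theorem \ref{thm1}.
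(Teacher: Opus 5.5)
Your proof is correct and follows the paper's own argument: normality gives $\Vert A_m^{*}x\Vert=\Vert A_mx\Vert$ for every $x$, summing $f$ of these yields $f(\Vert \mathbb{A}^{*}x\Vert_f)=f(\Vert \mathbb{A}x\Vert_f)$, and taking the supremum over $S_1(\mathcal{H})$ finishes it. Your side remark on hyponormal tuples, which give only $\Vert \mathbb{A}^{*}\Vert_f\le\Vert \mathbb{A}\Vert_f$, is also correct.
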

\begin{proof}  For any $ x\in \mathcal{H}, $ it is clear that
\begin{align}
\label{equ13}
f\left( \Vert \mathbb{A}^{*} x \Vert_{f} \right)
&= \sum\limits_{m=1}^{n}f\left( \Vert A_{m}^{*}x \Vert \right) = \sum\limits_{m=1}^{n}f\left( \langle A_{m}^{*}x,A_{m}^{*}x\rangle^{1/2} \right) \nonumber \\
&= \sum\limits_{m=1}^{n}f\left( \langle A_{m}A_{m}^{*}x,x\rangle^{1/2} \right) = \sum\limits_{m=1}^{n}f\left( \langle A_{m}^{*}A_{m}x,x\rangle^{1/2} \right) \nonumber \\
&=\sum\limits_{m=1}^{n}f\left( \langle \vert A_{m}^{*} \vert^{2}x,x \rangle ^{1/2}  \right) =\sum\limits_{m=1}^{n}f\left( \Vert \ \vert A_{m} \vert x  \Vert \right)
=f\left( \Vert  \mathbb{A} x  \Vert_{f} \right) .
\end{align}
Taking supremum over all $ x\in S_{1}(\mathcal{H})  $ in \eqref{equ13} yields
\(
\Vert \mathbb{A}^{*}\Vert_{f} = \Vert  \mathbb{A}  \Vert_f.
\)
\end{proof}
\begin{lemma}
\label{lem3}
Let $\mathbb{A}\in \mathfrak{B}^{n}(\mathcal{H})$ be $n$-tuple of operators and $ 0\leq \delta \leq \Vert \mathbb{A} \Vert_{f}. $
 Then
\[
\Vert \mathbb{A}\Vert_{(f,\delta)} = \Vert \mathbb{A} \Vert_f.
\]
\end{lemma}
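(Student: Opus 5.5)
The plan is to prove the two inequalities $\Vert \mathbb{A}\Vert_{(f,\delta)} \le \Vert \mathbb{A}\Vert_f$ and $\Vert \mathbb{A}\Vert_{(f,\delta)} \ge \Vert \mathbb{A}\Vert_f$ separately, working directly from Definition \ref{def5} and Definition \ref{def6}. No earlier result is needed beyond these definitions.

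For the upper bound, note that $\Delta_{(f,\delta)}(\mathbb{A})\subset S_{1}(\mathcal{H})$ by definition. Hence the supremum of $\Vert \mathbb{A}x\Vert_f$ over $\Delta_{(f,\delta)}(\mathbb{A})$ is at most the supremum over all of $S_1(\mathcal{H})$, which is $\Vert \mathbb{A}\Vert_f$.

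For the lower bound, I first show that $\Delta_{(f,\delta)}(\mathbb{A})$ is nonempty when $0\le\delta\le \Vert\mathbb{A}\Vert_f$.

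1. If $\delta<\Vert \mathbb{A}\Vert_f$, the definition of the supremum gives some $x_0\in S_1(\mathcal{H})$ with $\Vert \mathbb{A}x_0\Vert_f>\delta$. So the set is nonempty.
2. The borderline case $\delta=\Vert\mathbb{A}\Vert_f$ is the main obstacle, because the supremum need not be attained in infinite dimensions. In that case the set may be empty, and the statement should be read with the convention that the supremum of the empty set is interpreted appropriately, or with $\delta<\Vert \mathbb{A}\Vert_f$. I would flag this and treat the attained case separately: if some $x$ attains the norm, then $\Vert \mathbb{A}\Vert_{(f,\delta)}=\Vert\mathbb{A}\Vert_f$ directly.

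Now take any $\varepsilon>0$ with $\Vert\mathbb{A}\Vert_f-\varepsilon\ge\delta$. Choose $x_\varepsilon\in S_1(\mathcal{H})$ with $\Vert\mathbb{A}x_\varepsilon\Vert_f>\Vert\mathbb{A}\Vert_f-\varepsilon\ge\delta$. Then $x_\varepsilon\in\Delta_{(f,\delta)}(\mathbb{A})$, so
\[
\Vert \mathbb{A}\Vert_{(f,\delta)}\ge \Vert\mathbb{A}x_\varepsilon\Vert_f>\Vert\mathbb{A}\Vert_f-\varepsilon .
\]
Letting $\varepsilon\to0$ gives $\Vert \mathbb{A}\Vert_{(f,\delta)}\ge\Vert\mathbb{A}\Vert_f$, and combined with the upper bound this proves the equality.

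The key point is that every near-maximizing vector of $x\mapsto\Vert\mathbb{A}x\Vert_f$ automatically satisfies the constraint $\Vert\mathbb{A}x\Vert_f\ge\delta$. Removing the vectors below the threshold $\delta$ therefore never lowers the supremum.
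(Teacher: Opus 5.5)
Your proof is correct and is essentially the paper's argument: the paper splits $S_{1}(\mathcal{H})$ at the threshold to get $\Vert\mathbb{A}\Vert_f\le\max\{\delta,\Vert\mathbb{A}\Vert_{(f,\delta)}\}$, which is your near-maximizer observation in packaged form, and combines it with the same trivial reverse inequality. Your caveat about $\delta=\Vert\mathbb{A}\Vert_f$ with a non-attained supremum is justified, since $\Delta_{(f,\delta)}(\mathbb{A})$ can then be empty; the paper's proof silently needs the same restriction through its hypothesis ``if $\Vert\mathbb{A}\Vert_{(f,\delta)}\ge\delta$'', i.e.\ $\Delta_{(f,\delta)}(\mathbb{A})\neq\emptyset$.
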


\begin{proof}
It is clear that
\begin{align*}
\Vert \mathbb{A} \Vert_f
&= \sup \left\lbrace  \Vert \mathbb{A} x \Vert_f : x \in S_{1}(\mathcal{H}) \right\rbrace  \\
&= \max \left\lbrace
\sup \left\lbrace  \Vert \mathbb{A} x \Vert_f : \
x \in S_{1}(\mathcal{H}), \
\Vert \mathbb{A} x\Vert_f \geq \delta\right\rbrace, \right. \\
&\hspace{4cm} \left. \sup \left\lbrace  \Vert \mathbb{A} x \Vert_f: \
x \in S_{1}(\mathcal{H}), \ 0\leq
\Vert \mathbb{A} x\Vert_f < \delta\right\rbrace \right\rbrace \\
&\leq \max \left\lbrace
\sup \left\lbrace \Vert \mathbb{A} x \Vert_{f} : \
x \in S_{1}(\mathcal{H}), \
\Vert \mathbb{A} x\Vert_f \geq \delta\right\rbrace, \delta \right\rbrace \\
&= \max \left\lbrace \delta, \Vert \mathbb{A} \Vert_{(f,\delta )}  \right\rbrace .
\end{align*}
If $\Vert\mathbb{A}\Vert_{(f,\delta)} \geq \delta$, then the above inequality implies that
\(
\Vert \mathbb{A} \Vert_f \leq \Vert \mathbb{A} \Vert_{(f,\delta)}.
\)
Moreover, it is clear that
\(
\Vert \mathbb{A} \Vert_{(f,\delta)} \leq \Vert \mathbb{A} \Vert_f.
\)
Finally, we obtain
\(
\Vert \mathbb{A} \Vert_{(f,\delta)} = \Vert A \Vert_f.\)
\end{proof}

First, we present a well-known inequality that is essential for proving the theorems.

\begin{lemma} \textnormal{(\!\!\cite{Berezansky})}
\label{lem4}
If $ A\in \mathfrak{B}(\mathcal{H}) , \ A\geq 0 ,$ then for any $ x\in \mathbb{A} $,
\[
\Vert Ax \Vert^{2}\leq \Vert A \Vert \langle Ax,x\rangle .
\]
\end{lemma}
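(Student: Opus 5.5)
The plan is to reduce the inequality to the Cauchy--Schwarz inequality for the positive sesquilinear form induced by $A$, together with the elementary bound $\langle Ay,y\rangle \le \Vert A\Vert \,\Vert y\Vert^{2}$ valid for $A\ge 0$. (Here the hypothesis should be read as $x\in\mathcal{H}$; the symbol $\mathbb{A}$ in the statement is evidently a typographical slip.)

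\emph{Step 1.} Since $A\ge 0$, the form $[u,v]:=\langle Au,v\rangle$ is a positive semidefinite Hermitian sesquilinear form on $\mathcal{H}$. Cauchy--Schwarz for such forms (proved by expanding $[u+tv,u+tv]\ge 0$ for $t\in\mathbb{C}$, with no definiteness needed) gives
\[
\vert \langle Au,v\rangle\vert^{2}\le \langle Au,u\rangle\,\langle Av,v\rangle .
\]

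\emph{Step 2.} Apply this with $u=x$ and $v=Ax$. The left side becomes $\vert\langle Ax,Ax\rangle\vert^{2}=\Vert Ax\Vert^{4}$, so
\[
\Vert Ax\Vert^{4}\le \langle Ax,x\rangle\,\langle A(Ax),Ax\rangle .
\]
The second factor satisfies $\langle A(Ax),Ax\rangle\le \Vert A\Vert\,\Vert Ax\Vert^{2}$, using $A\ge 0$ and $\vert\langle Ay,y\rangle\vert\le \Vert A\Vert\,\Vert y\Vert^{2}$. Hence $\Vert Ax\Vert^{4}\le \Vert A\Vert\,\langle Ax,x\rangle\,\Vert Ax\Vert^{2}$.

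\emph{Step 3.} If $Ax=0$, the claim is trivial because $\langle Ax,x\rangle\ge 0$. Otherwise, dividing by $\Vert Ax\Vert^{2}$ gives $\Vert Ax\Vert^{2}\le\Vert A\Vert\langle Ax,x\rangle$.

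An alternative route uses $A^{1/2}$: write $\Vert Ax\Vert^{2}=\Vert A^{1/2}(A^{1/2}x)\Vert^{2}\le\Vert A^{1/2}\Vert^{2}\Vert A^{1/2}x\Vert^{2}=\Vert A\Vert\langle Ax,x\rangle$. This works as well but relies on the square root and on the identity $\Vert A^{1/2}\Vert^{2}=\Vert A\Vert$.

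The only point requiring care is that Cauchy--Schwarz holds for a merely semidefinite form, and this is standard. I do not expect any real obstacle in this argument.
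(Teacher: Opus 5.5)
Your proof is correct and complete. The paper gives no proof of this lemma; it only cites Berezansky. Your main route is the standard textbook proof:
\begin{itemize}
\item apply the Cauchy--Schwarz inequality for the positive semidefinite form $\langle A\cdot,\cdot\rangle$ with $v=Ax$;
\item bound $\langle A(Ax),Ax\rangle\le\Vert A\Vert\,\Vert Ax\Vert^{2}$;
\item divide by $\Vert Ax\Vert^{2}$, treating $Ax=0$ separately.
\end{itemize}
Your $A^{1/2}$ alternative is equally valid. The Hermitian symmetry you use for the form is automatic, because $A\ge 0$ on a complex Hilbert space forces $A=A^{*}$. You are also right that $x\in\mathbb{A}$ should read $x\in\mathcal{H}$.
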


The next result establishes the relationship between the $(f,\delta)$-numerical radius of an $n$-tuple of operators and its modulus.
\begin{theorem}
\label{thm3}
Let $ f $ be geometrically convex function, $ \mathbb{A} \in \mathfrak{B}^{n}(\mathcal{H}) $ be $n$-tuple of operators and
$0 \leq \delta \leq \Vert \mathbb{A}\Vert_f.$ Then,
\(
f\left( \Vert \mathbb{A} \Vert_{f} \right)
\leq n
f\left( \omega_{(f,\delta )}( \vert \mathbb{A} \vert )\right)
.
\)
\end{theorem}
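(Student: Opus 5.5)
Fix $x \in S_{1}(\mathcal{H})$. Since $|A_{m}| \geq 0$, Lemma~\ref{lem4} applied to $|A_m|$ gives
\[
\Vert A_{m}x\Vert^{2}=\Vert\, |A_{m}|x\Vert^{2} \leq \Vert A_m\Vert \,\langle |A_{m}|x,x\rangle .
\]
I will avoid this crude form and instead work pointwise at the vector level. The plan has three steps.

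\emph{Step 1: a pointwise bound.} For each $m$,
\[
\Vert A_m x\Vert^2=\langle |A_m|^2x,x\rangle \leq \Vert\,|A_m|\,\Vert\,\langle |A_m|x,x\rangle \leq \Vert \mathbb{A}\Vert_f\text{-type quantities},
\]
which is awkward to use directly. The better route goes through suprema. For any $x\in S_1(\mathcal{H})$ and any $m$, monotonicity of $f$ and the triangle inequality will be used to compare $f(\Vert A_m x\Vert)$ with $f$ of a numerical-radius quantity for $|A_m|$.

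\emph{Step 2: reduce to a single positive operator.} By Lemma~\ref{lem1},
\[
f(\Vert\mathbb{A}\Vert_f)=\sup_{x}\sum_m f(\Vert\,|A_m|x\Vert)\leq \sum_m f(\Vert\,|A_m|\,\Vert).
\]
Since $|A_m|$ is positive, hence normal, $\Vert\,|A_m|\,\Vert=\omega(|A_m|)$, so it suffices to show
\[
f(\omega(|A_m|)) \leq f\bigl(\omega_{(f,\delta)}(|\mathbb{A}|)\bigr) \quad \text{for each } m;
\]
summing over $m$ then produces the factor $n$.

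\emph{Step 3: attain the single-operator supremum inside $\Delta_{(f,\delta)}(|\mathbb{A}|)$.} This is the main obstacle. Choose unit vectors $x_k$ with $\langle |A_m|x_k,x_k\rangle\to\Vert\,|A_m|\,\Vert$. For positive operators this forces $\Vert\,|A_m|x_k\Vert\to\Vert\,|A_m|\,\Vert$, which follows from Lemma~\ref{lem4} together with $\Vert\,|A_m|x_k\Vert\geq\langle |A_m|x_k,x_k\rangle$. Then:
\begin{itemize}
\item Monotonicity gives $|\langle|\mathbb{A}|x_k,x_k\rangle|_f\geq f^{-1}\bigl(f(\langle|A_m|x_k,x_k\rangle)\bigr)\to\Vert\,|A_m|\,\Vert$.
\item It remains to check $x_k\in\Delta_{(f,\delta)}(|\mathbb{A}|)$, i.e.\ $\Vert\,|\mathbb{A}|x_k\Vert_f\geq\delta$.
\end{itemize}
The membership condition holds when $\Vert\,|A_m|\,\Vert$ is the largest of the norms and is close to $\Vert\mathbb{A}\Vert_f$, but not in general. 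I would therefore run the argument with a better choice of vectors. Take $x_k$ maximizing $\Vert\,|\mathbb{A}|x\Vert_f$, so that $\Vert\,|\mathbb{A}|x_k\Vert_f\to\Vert\mathbb{A}\Vert_f\geq\delta$ by Lemma~\ref{lem1}; these vectors lie in $\Delta_{(f,\delta)}$ for large $k$, up to a limiting argument when $\delta=\Vert\mathbb{A}\Vert_f$. Then bound
\[
f(\Vert\mathbb{A}\Vert_f)=\lim_k\sum_m f(\Vert\,|A_m|x_k\Vert)
\]
term by term, using
\[
\Vert\,|A_m|x_k\Vert^2\leq\Vert\,|A_m|\,\Vert\,\langle|A_m|x_k,x_k\rangle
\]
together with geometric convexity of $f$:
\[
f(\Vert\,|A_m|x_k\Vert)\leq \sqrt{f(\Vert\,|A_m|\,\Vert)\, f(\langle|A_m|x_k,x_k\rangle)}.
\]
Each factor is at most $f\bigl(\omega_{(f,\delta)}(|\mathbb{A}|)\bigr)$: the second because $x_k\in\Delta_{(f,\delta)}$, and the first by the attainment argument above, applied with care. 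Summing over $m$ yields the factor $n$. The delicate point I expect to fight with is controlling $f(\Vert\,|A_m|\,\Vert)$ by the restricted $(f,\delta)$-radius rather than by the unrestricted $\omega_{(f,0)}$.
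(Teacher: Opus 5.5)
\textbf{There is a genuine gap.} It sits at the point you flag as delicate, and it cannot be closed. Both the reduction in Step 2 and the last line of Step 3 need $f(\Vert\,|A_m|\,\Vert)\le f\bigl(\omega_{(f,\delta)}(|\mathbb{A}|)\bigr)$, i.e.\ $\Vert A_m\Vert\le\omega_{(f,\delta)}(|\mathbb{A}|)$ for each $m$. This holds for $\delta=0$, where the supremum runs over all of $S_1(\mathcal{H})$, but it fails in general for $\delta>0$.

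Take $\mathcal{H}=\mathbb{C}^2$, $f(t)=t$ (geometrically convex), $A_1x=(x_1,0)$ and $A_2x=(0,10x_2)$. Then $\Vert\mathbb{A}x\Vert_f=|x_1|+10|x_2|$, so $\Vert\mathbb{A}\Vert_f=\sqrt{101}$. For $\delta=\sqrt{101}$ the set $\Delta_{(f,\delta)}(|\mathbb{A}|)$ consists of the unit vectors with $(|x_1|,|x_2|)=(1,10)/\sqrt{101}$. Hence $\omega_{(f,\delta)}(|\mathbb{A}|)=1001/101<10=\Vert\,|A_2|\,\Vert$. Indeed $\omega_{(f,\delta)}(|\mathbb{A}|)<10$ for every $\delta\in(10,\sqrt{101}]$, since $|\langle|\mathbb{A}|x,x\rangle|_f=1+9|x_2|^2$ and $|x_2|=1$ is excluded from $\Delta$. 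So no choice of vectors, however careful, gives your first-factor bound.

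Even the summed version fails, which shows that the first inequality of Step 2, $f(\Vert\mathbb{A}\Vert_f)\le\sum_m f(\Vert\,|A_m|\,\Vert)$, already discards too much. Take $f(t)=t^2$ and let $A_1,A_2$ be the orthogonal projections onto $e_1$ and $(\cos 80^\circ,\sin 80^\circ)$. At $\delta=\Vert\mathbb{A}\Vert_f$, the set $\Delta_{(f,\delta)}(|\mathbb{A}|)$ consists of the unimodular multiples of the bisector $(\cos40^\circ,\sin40^\circ)$. Then $n\,f(\omega_{(f,\delta)}(|\mathbb{A}|))=4\cos^4 40^\circ\approx 1.38<2=\sum_m f(\Vert A_m\Vert)$.

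\textbf{The missing idea} is not to control $\Vert\,|A_m|\,\Vert$ by the restricted radius at all, and to let the unknown cancel instead. After Lemma~\ref{lem4} and geometric convexity, bound the first factor by the tuple norm: $f(\Vert\,|A_m|\,\Vert)=\sup_x f(\Vert A_mx\Vert)\le f(\Vert\mathbb{A}\Vert_f)$, which equals $f(\Vert\,|\mathbb{A}|\,\Vert_f)$ by Lemma~\ref{lem1}. Treat the second factors jointly by Cauchy--Schwarz: $\sum_m f^{1/2}(\langle|A_m|x,x\rangle)\le\sqrt{n}\,f^{1/2}(|\langle|\mathbb{A}|x,x\rangle|_f)$. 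Together these give, for every unit $x$,
\[
f^{2}(\Vert\mathbb{A}x\Vert_f)\le n\,f(\Vert\mathbb{A}\Vert_f)\,f\bigl(|\langle|\mathbb{A}|x,x\rangle|_f\bigr).
\]
Now evaluate at your near-maximizers $x_k\in\Delta_{(f,\delta)}(\mathbb{A})=\Delta_{(f,\delta)}(|\mathbb{A}|)$; this is what Lemma~\ref{lem3} encodes. You get $f^2(\Vert\mathbb{A}\Vert_f)\le n\,f(\Vert\mathbb{A}\Vert_f)\,f(\omega_{(f,\delta)}(|\mathbb{A}|))$. Cancelling one factor $f(\Vert\mathbb{A}\Vert_f)$ proves the theorem; the case $\mathbb{A}=0$ is trivial. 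This is the paper's argument.

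Cauchy--Schwarz is what produces the constant $n$. Bounding the second factors termwise by $f(\omega_{(f,\delta)}(|\mathbb{A}|))$, with the first bounded by $f(\Vert\mathbb{A}\Vert_f)$, would only give $n^2$.
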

\begin{proof}
Since $f$ is geometrically convex, then
for any $x \in \mathcal{H}$ by Lemma \ref{lem4} it follows that
\begin{align*}
f(\Vert\mathbb{A}x\Vert_f)
&= \sum\limits_{m=1}^n f(\Vert A_m x \Vert) = \sum\limits_{m=1}^n f\left( \langle A_m^*A_m x, x \rangle^{1/2} \right) = \sum\limits_{m=1}^n f\left( \Vert \ \vert A_m \vert x \Vert \right) \\
&= \sum\limits_{m=1}^n f\left( \left( \Vert \ \vert A_m \vert x \Vert^{2} \right)^{1/2} \right)  \leq \sum\limits_{m=1}^n f\left( \Vert \ \vert A_m \vert \ \Vert^{1/2} \langle \vert A_{m} \vert x,x \rangle^{1/2}\right)  \\
&\leq \sum\limits_{m=1}^n \left( f\left( \Vert \ \vert A_m \vert \ \Vert\right)\right)  ^{1/2} \left( f \left( \langle \vert A_{m} \vert x,x \rangle\right)\right) ^{1/2} \\
&\leq \left( f\left( \Vert \ \vert \mathbb{A} \vert \ \Vert_{f} \right)  \right) ^{1/2} \sum\limits_{m=1}^{n} \left( f\left( \langle \vert A_{m} \vert x,x \rangle \right)  \right) ^{1/2} \\
&\leq \sqrt{n} \left( f  \left( \Vert \ \vert \mathbb{A} \vert \ \Vert_{f} \right)\right)^{1/2} \left( \sum\limits_{m=1}^n f \left( \langle \vert A_{m} \vert x,x \rangle \right) \right) ^{1/2} \\
&= \sqrt{n} f^{1/2} \left( \Vert \ \vert \mathbb{A} \vert \ \Vert_{f} \right)f^{1/2}\left( \langle \vert \mathbb{A} \vert x,x \rangle_{f} \right),
\end{align*}
that is,
\begin{equation}
\label{equ14}
f^{2}(\Vert\mathbb{A}x\Vert_f)\leq n f\left( \Vert \ \vert \mathbb{A} \ \vert \Vert_{f} \right)f\left( \langle \vert \mathbb{A} \vert x,x \rangle_{f} \right).
\end{equation}
Moreover, for any $ x\in S_{1}(\mathcal{H}) $ and $ 0\leq \delta \leq \Vert \mathbb{A} \Vert_{f} $, since
\begin{multline*}
f(\delta)\leq \sum\limits_{m=1}^n f\left( \Vert A_{m}x \Vert \right) = \sum\limits_{m=1}^n f \left( \langle A_{m}^*A_{m}x,x \rangle^{1/2} \right) \\
= \sum\limits_{m=1}^n f \left( \Vert \ \vert A_{m} \vert x \Vert  \right) = f \left( \Vert \ \vert \mathbb{A} \vert \ x \Vert  \right) ,
\end{multline*}
we obtain
\begin{equation}
\label{equ15}
\Delta_{(f,\delta )}(\mathbb{A}) = \Delta_{(f,\delta)}(\vert \mathbb{A} \vert).
\end{equation}
Therefore, \eqref{equ14} and \eqref{equ15} yields
\(
f^{2}\left( \Vert \mathbb{A} \Vert_{(f,\delta )} \right)
\leq n f \left( \Vert \ \vert \mathbb{A}  \vert \ \Vert_{f}   \right)
f\left( \omega_{(f,\delta )}( \vert \mathbb{A} \vert )\right)
.
\)
By Lemma \ref{lem3} we have
\[
f^{2}\left( \Vert \mathbb{A} \Vert_{f} \right)
\leq n f \left( \Vert \ \vert \mathbb{A} \vert \ \Vert_{f}   \right)
f\left( \omega_{(f,\delta )}( \vert \mathbb{A} \vert )\right)
.
\]
By Lemma \ref{lem1} it follows that
\(
f^{2}\left( \Vert \mathbb{A} \Vert_{f} \right)
\leq n f \left( \Vert \mathbb{A} \Vert_{f}  \right)
f\left( \omega_{(f,\delta )}( \vert \mathbb{A} \vert )\right)
.
\)
Thus, we have
\(
f\left( \Vert \mathbb{A} \Vert_{f} \right)
\leq n
f\left( \omega_{(f,\delta )}( \vert \mathbb{A} \vert )\right)
.
\)
\end{proof}

If we take $ f(t)=t^{2}, \ 0\leq t< \infty  $ in Theorem \ref{thm3}, then we obtain the following corollaries.
\begin{corollary}
\label{cor3}
Let $ \mathbb{A} \in \mathfrak{B}^{n}(\mathcal{H}) $ be $n$-tuple of operators and
$0 \leq \delta \leq \Vert \mathbb{A}\Vert_{t^{2}}.$  Then,
\(
\Vert \mathbb{A} \Vert_{t^{2}} \leq \sqrt{n} \omega_{(t^{2},\delta)}(\vert \mathbb{A} \vert ).
\)
\end{corollary}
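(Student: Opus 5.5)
Corollary \ref{cor3} is the specialization $f(t)=t^{2}$ of Theorem \ref{thm3}, so the plan is to check the hypotheses of Theorem \ref{thm3} for this $f$ and then take square roots.

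\emph{Step 1: hypotheses.} The function $f(t)=t^{2}$ is continuous, strictly increasing and surjective on $[0,\infty)$. It is geometrically convex, in fact with equality:
\[
f(\sqrt{xy})=xy=\sqrt{x^{2}y^{2}}=\sqrt{f(x)f(y)}.
\]
The admissible range of $\delta$ in the corollary, $0\leq \delta\leq \Vert \mathbb{A}\Vert_{t^{2}}$, is exactly the range required in Theorem \ref{thm3} with $f(t)=t^2$. Hence Theorem \ref{thm3} applies directly and gives
\[
\Vert \mathbb{A}\Vert_{t^{2}}^{2}\leq n\,\omega_{(t^{2},\delta)}^{2}(\vert\mathbb{A}\vert).
\]

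\emph{Step 2: conclusion.} Both sides are nonnegative and $f^{-1}(s)=\sqrt{s}$ is increasing, so taking square roots yields
\[
\Vert \mathbb{A}\Vert_{t^{2}}\leq \sqrt{n}\,\omega_{(t^{2},\delta)}(\vert\mathbb{A}\vert),
\]
which is the claim.

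\emph{Possible obstacle.} Formally there is none: the argument is a direct substitution into Theorem \ref{thm3}. The only point worth watching is that the auxiliary results used inside that proof (Lemma \ref{lem1}, Lemma \ref{lem3}, Lemma \ref{lem4}) were proved for general $f$. Since they require nothing beyond the standing assumptions on $f$, which we verified in Step 1, the specialization is legitimate. For orientation, $\Vert\mathbb{A}\Vert_{t^2}=\Vert\sum_m A_m^*A_m\Vert^{1/2}$ is the Euclidean operator norm of Definition \ref{def1}, so the corollary reads as a Drnov\v{s}ek--M\"uller type lower bound for the joint $\delta$-numerical radius of the modulus tuple.
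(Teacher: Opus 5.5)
Your argument is the paper's own: Corollary \ref{cor3} is obtained there simply by substituting $f(t)=t^{2}$ into Theorem \ref{thm3}. Your check that $t^{2}$ is geometrically convex, followed by taking square roots in $\Vert\mathbb{A}\Vert_{t^{2}}^{2}\le n\,\omega_{(t^{2},\delta)}^{2}(\vert\mathbb{A}\vert)$, is all the specialization requires.

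I would not sign off on your claim that there is formally no obstacle. Theorem \ref{thm3} uses Lemma \ref{lem3} to replace $\Vert\mathbb{A}\Vert_{(f,\delta)}$ by $\Vert\mathbb{A}\Vert_{f}$, and that lemma tacitly needs $\Delta_{(f,\delta)}(\mathbb{A})\neq\emptyset$. This is a condition on $\delta$ and on attainment of the norm, not on $f$, so checking the standing assumptions on $f$ does not cover it.

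For $0\le\delta<\Vert\mathbb{A}\Vert_{t^{2}}$ the set is nonempty and contains a norming sequence, by the definition of the supremum. At $\delta=\Vert\mathbb{A}\Vert_{t^{2}}$ it is nonempty only if the supremum defining $\Vert\mathbb{A}\Vert_{t^{2}}$ is attained, e.g.\ when $\dim\mathcal{H}<\infty$.

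In general it can be empty. Take $n=1$, $\mathcal{H}=\ell^{2}$ and $A=\mathrm{diag}(1-1/k)_{k\ge1}\ge0$. Then $\vert A\vert=A$ and $\Vert A\Vert=1$, but $\Vert Ax\Vert<1$ for every unit $x$. Hence $\Delta_{(t^{2},1)}(\vert A\vert)=\emptyset$, and the asserted inequality $1\le\omega_{(t^{2},1)}(\vert A\vert)$ fails under any usual convention for $\sup\emptyset$.

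So your proof, like the paper's, establishes the corollary for $0\le\delta<\Vert\mathbb{A}\Vert_{t^{2}}$. At the endpoint it holds only under norm attainment, and you should state that restriction explicitly rather than declare the lemmas unproblematic.
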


\begin{corollary}
\label{cor4}
Let $\mathbb{A} = (A_1,\dots,A_n) \in \mathfrak{B}^n(\mathcal{H})$, $ A_{m}\geq 0, \ m=1,\ldots, n $ and $ 0\leq \delta \leq \Vert \mathbb{A} \Vert_{t^{2}}$. Then,
\(
\omega_{(t^{2},\delta)}(\mathbb{A})\geq \frac{\Vert \mathbb{A} \Vert_{t^{2}}}{\sqrt{n}}.
\)
\end{corollary}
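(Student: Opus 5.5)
The plan is to specialize Corollary \ref{cor3} to positive operators. When each $A_m\geq 0$, the modulus satisfies $\vert A_m\vert=(A_m^{*}A_m)^{1/2}=(A_m^{2})^{1/2}=A_m$. This holds by uniqueness of the positive square root: $A_m$ is itself a positive operator whose square is $A_m^{2}$. Hence $\vert\mathbb{A}\vert=\mathbb{A}$ as $n$-tuples, in the sense of item 6 of Definition \ref{def1}.

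Next we apply Corollary \ref{cor3}, that is, Theorem \ref{thm3} with $f(t)=t^{2}$. First we confirm that $f(t)=t^{2}$ satisfies the standing hypotheses: it is continuous, strictly increasing, surjective on $[0,\infty)$, and geometrically convex, since $(\sqrt{xy})^{2}=xy=\sqrt{x^{2}y^{2}}$ with equality. For $0\leq\delta\leq\Vert\mathbb{A}\Vert_{t^{2}}$, Corollary \ref{cor3} then gives $\Vert\mathbb{A}\Vert_{t^{2}}\leq\sqrt{n}\,\omega_{(t^{2},\delta)}(\vert\mathbb{A}\vert)$.

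Substituting $\vert\mathbb{A}\vert=\mathbb{A}$ yields $\Vert\mathbb{A}\Vert_{t^{2}}\leq\sqrt{n}\,\omega_{(t^{2},\delta)}(\mathbb{A})$. Dividing by $\sqrt{n}$ gives the claim.

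There is one point to check. The admissible range of $\delta$ in Corollary \ref{cor3} is stated in terms of $\Vert\mathbb{A}\Vert_{t^{2}}$, while the $\delta$-sets inside $\omega_{(t^{2},\delta)}(\vert\mathbb{A}\vert)$ are defined relative to $\Vert\,\vert\mathbb{A}\vert\,\Vert_{t^{2}}$. These two norms agree by Lemma \ref{lem1}, and also trivially here since $\vert\mathbb{A}\vert=\mathbb{A}$, so the ranges coincide. I expect no real obstacle. The only substantive input is the identity $\vert A_m\vert=A_m$ for positive $A_m$, which is standard functional calculus.
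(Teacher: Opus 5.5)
Your proposal is correct and follows the paper's route: Corollary \ref{cor4} is Corollary \ref{cor3} (Theorem \ref{thm3} with $f(t)=t^{2}$) combined with $\vert A_{m}\vert=A_{m}$ for $A_{m}\geq 0$, and your side checks (geometric convexity of $t^{2}$, and the $\delta$-range via Lemma \ref{lem1}) are sound. One caveat, which you inherit from Lemma \ref{lem3} and which the paper shares, concerns the endpoint $\delta=\Vert\mathbb{A}\Vert_{t^{2}}$ when $\dim\mathcal{H}=\infty$ and the norm is not attained: there $\Delta_{(t^{2},\delta)}(\mathbb{A})=\emptyset$, so you should assume $\delta<\Vert\mathbb{A}\Vert_{t^{2}}$ or that the supremum defining $\Vert\mathbb{A}\Vert_{t^{2}}$ is attained.
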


If we take $ \delta =0 $ in  Corollary \ref{cor4}, then the obtained result is an improved version of the result obtained in \cite{Drnovsek2014}.

Now, it will be presented two theorems that describes the relationship between
the $(f,\delta)$-numerical radius of an $n$-tuple of operators and the $\delta$-numerical
radius of its coordinate operators.
\begin{theorem}
\label{thm4}
Let $  \mathbb{A} = (A_1, \ldots, A_n) \in \mathfrak{B}^{n}(\mathcal{H})$  be $n$-tuple of operators and $
0 \leq \delta_m \leq \Vert A_m \Vert,\; m=1,\ldots,n. $ Then
\[
\frac{1}{n} \sum\limits_{m=1}^n \omega_{\delta_m}(A_m)
\leq
\omega_{(f,\delta)}(\mathbb{A}),
\]
where
\(
\delta = f^{-1}\left( \sum\limits_{m=1}^n f(\delta_m) \right).
\)

\end{theorem}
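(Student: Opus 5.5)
The plan is to prove the stronger termwise bound $\omega_{\delta_m}(A_m)\le \omega_{(f,\delta)}(\mathbb{A})$ for every $m=1,\ldots,n$, and then sum over $m$ and divide by $n$. As explained below, the membership step needed for this bound is \emph{not} established by the argument as planned.

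\textbf{Step 1 (pointwise domination).} For any $x\in\mathcal{H}$ and any fixed $m$, monotonicity of $f$ and $f^{-1}$ together with $f\ge 0$ give
\[
\vert\langle \mathbb{A}x,x\rangle\vert_f=f^{-1}\Bigl(\sum_{k=1}^n f(\vert\langle A_kx,x\rangle\vert)\Bigr)\ge f^{-1}\bigl(f(\vert\langle A_mx,x\rangle\vert)\bigr)=\vert\langle A_mx,x\rangle\vert .
\]
The same computation gives $\Vert\mathbb{A}x\Vert_f\ge \Vert A_mx\Vert$.

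\textbf{Step 2 (approximating each $\omega_{\delta_m}(A_m)$).} Fix $\varepsilon>0$. By Definition \ref{def2} and Definition \ref{def3}, choose $x_m\in S_1(\mathcal{H})$ with $\Vert A_mx_m\Vert\ge\delta_m$ and $\vert\langle A_mx_m,x_m\rangle\vert>\omega_{\delta_m}(A_m)-\varepsilon$. If $x_m\in\Delta_{(f,\delta)}(\mathbb{A})$, then Step 1 gives
\[
\omega_{(f,\delta)}(\mathbb{A})\ge\vert\langle\mathbb{A}x_m,x_m\rangle\vert_f\ge\omega_{\delta_m}(A_m)-\varepsilon .
\]
Letting $\varepsilon\to0$, summing over $m$ and dividing by $n$ then yields the claim.

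\textbf{Step 3 (the main obstacle: the membership $x_m\in\Delta_{(f,\delta)}(\mathbb{A})$).} This requires
\[
\sum_{k} f(\Vert A_kx_m\Vert)\ge \sum_k f(\delta_k).
\]
The $k=m$ summand is controlled by the choice of $x_m$, but the summands $k\ne m$ are not. A naive attempt to force them large does not work.

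What I would try first is to modify $x_m$ while keeping it nearly optimal for $A_m$. One option is to perturb $x_m$ toward vectors $y_k$ with $\Vert A_ky_k\Vert\ge\delta_k$. Another is to exploit the freedom given by $\delta\le\Vert\mathbb{A}\Vert_f$ together with connectedness of the level sets of $x\mapsto\Vert\mathbb{A}x\Vert_f$ on $S_1(\mathcal{H})$ (as in the Dekker-type argument cited after Definition \ref{def2}). Neither route yields the needed bound on the cross terms in general, so this step remains open.

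In fact the stronger termwise inequality aimed at in Steps 1 and 2 can fail. Take $f(t)=t$, $A_1=\mathrm{diag}(1,0)$, $A_2=\mathrm{diag}(0,1/2)$, $\delta_1=1$ and $\delta_2=1/2$. Then $\delta=3/2$, while $\Vert\mathbb{A}\Vert_f=\sqrt5/2<3/2$, so $\Delta_{(f,\delta)}(\mathbb{A})$ is empty. Here the range condition $\delta\le\Vert\mathbb{A}\Vert_f$ itself fails.

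The factor $1/n$ in the statement suggests that the intended proof instead averages over a single common vector $x\in\Delta_{(f,\delta)}(\mathbb{A})$, via $\frac1n\sum_m\vert\langle A_mx,x\rangle\vert\le\max_m\vert\langle A_mx,x\rangle\vert\le\vert\langle\mathbb{A}x,x\rangle\vert_f$. If so, the whole argument reduces to the same membership question: finding, for given $\delta_m$, one $x$ with $\Vert A_mx\Vert\ge\delta_m$ that is nearly optimal for every $m$ at once and satisfies $\Vert\mathbb{A}x\Vert_f\ge\delta$. Under the hypotheses of Theorem \ref{thm4} alone I do not see how to obtain this. I would expect to need either an added assumption guaranteeing $\delta\le\Vert\mathbb{A}\Vert_f$, or a reformulation in which each $\omega_{\delta_m}(A_m)$ is taken over a common constraint set.
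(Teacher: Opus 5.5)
Your Steps 1 and 2 are exactly the paper's argument. The paper proves the termwise bound $\omega_{\delta_m}(A_m)\le\omega_{(f,\delta)}(\mathbb{A})$ and then averages, so the factor $1/n$ is just wasted slack, not a common-vector argument. The paper disposes of your Step 3 by asserting $\Delta_{\delta_m}(A_m)\subset\Delta_{(f,\delta)}(\mathbb{A})$ through the chain $f(\delta_m)\le\sum_k f(\delta_k)\le\sum_k f(\Vert A_kx\Vert)=f(\Vert\mathbb{A}x\Vert_f)$. The second inequality needs $\Vert A_kx\Vert\ge\delta_k$ for every $k$. So the chain is valid only for $x\in\bigcap_k\Delta_{\delta_k}(A_k)$, not for $x\in\Delta_{\delta_m}(A_m)$. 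You were right to refuse this step. It is false, and so is the theorem as stated, so the open step in your proposal cannot be closed by any argument.

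Your example only shows that the hypotheses do not force $\delta\le\Vert\mathbb{A}\Vert_f$. The failure persists inside the admissible range. Take $\mathcal{H}=\mathbb{C}^2$, $f(t)=t$, $A_1=\mathrm{diag}(1,0)$ and $A_2=\begin{pmatrix}0&2\sqrt{2}\\0&0\end{pmatrix}$, with $\delta_1=1=\Vert A_1\Vert$ and $\delta_2=2\le2\sqrt{2}=\Vert A_2\Vert$. Then $\Vert\mathbb{A}x\Vert_f=|x_1|+2\sqrt{2}|x_2|\le3$, with equality only when $|x_1|=1/3$ and $|x_2|=2\sqrt{2}/3$. Hence $\delta=3=\Vert\mathbb{A}\Vert_f$, and $\Delta_{(f,\delta)}(\mathbb{A})$ consists exactly of these vectors. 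On them $|\langle\mathbb{A}x,x\rangle|_f=|x_1|^2+2\sqrt{2}|x_1||x_2|=1/9+8/9=1$, so $\omega_{(f,\delta)}(\mathbb{A})=1$. On the other hand $\omega_{1}(A_1)=1$, and $\omega_{2}(A_2)=\sqrt{2}$. The latter is attained at $|x_1|=|x_2|=1/\sqrt{2}$, where $\Vert A_2x\Vert=2$. Thus $\tfrac12\bigl(\omega_{\delta_1}(A_1)+\omega_{\delta_2}(A_2)\bigr)=(1+\sqrt{2})/2>1$, and the termwise bound fails for $m=2$ as well.

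What your Step 1 does prove is a corrected statement. For $x\in\Delta_{\delta_m}(A_m)$ you get $\Vert\mathbb{A}x\Vert_f\ge\Vert A_mx\Vert\ge\delta_m$, so $\Delta_{\delta_m}(A_m)\subset\Delta_{(f,\delta_m)}(\mathbb{A})$. Hence $\omega_{\delta_m}(A_m)\le\omega_{(f,\delta_m)}(\mathbb{A})\le\omega_{(f,\min_k\delta_k)}(\mathbb{A})$. The range condition is automatic here, since $\delta_m\le\Vert A_m\Vert\le\Vert\mathbb{A}\Vert_f$. So the inequality holds with $\delta$ replaced by $\min_k\delta_k$. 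In particular, the case $\delta_1=\cdots=\delta_n=0$ used for Corollary~\ref{cor6} is unaffected.
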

\begin{proof}
For any $x \in \Delta_{\delta_m}(A_m)$, $m=1,\ldots,n$,
\[
0 \leq f(\delta_m)
\leq \sum\limits_{m=1}^n f(\delta_m)
\leq \sum\limits_{m=1}^n f(\Vert A_m x\Vert)
= f(\Vert \mathbb{A}x\Vert_f).
\]
Hence, for any $m = 1,\ldots,n$,
\(
\Delta_{\delta_m}(A_m) \subset \Delta_{(f,\delta)}(\mathbb{A}),
\
\delta = f^{-1}\left( \sum\limits_{m=1}^n f(\delta_m) \right).
\)
Thus, since
\(
f\left(\vert \langle A_{m}x,x\rangle \vert \right)  \leq \sum\limits_{m=1}^n f\left( \vert \langle A_{m}x,x\rangle \vert \right)
= f\left( \vert \langle \mathbb{A}x,x\rangle  \vert_{f} \right) , \ x \in \mathcal{H}
\),
it follows that
\(
\omega_{\delta_m}(A_m)
\leq
\omega_{(f,\delta)}(\mathbb{A}), \ m=1,\ldots, n,
\)
and hence,
\[
\frac{1}{n} \sum\limits_{m=1}^n \omega_{\delta_m}(A_m)
\leq
\omega_{(f,\delta)}(\mathbb{A}).
\tag*{\qedhere} \]
\end{proof}

By Theorem \ref{thm4} and the subadditivity of the concave increasing function $ f^{-1} $ the following corollary holds.
\begin{corollary}
\label{cor5}
Let $  \mathbb{A} = (A_1, \ldots, A_n) \in \mathfrak{B}^{n}(\mathcal{H})
\ \text{and} \
0 \leq \delta_m \leq \Vert A_m \Vert,\; m=1,\ldots,n . $ Then,
\begin{align*}
f^{-1}\left( \sum\limits_{m=1}^{n}f\left(\omega_{\delta_{m}}(A_{m}) \right) \right) & \leq \sum\limits_{m=1}^{n} f^{-1}\left( f\left(\omega_{\delta_{m}}(A_{m}) \right) \right) \\
&= \sum\limits_{m=1}^{n}\omega_{\delta_{m}}(A_{m}) \leq n \omega_{(f,\delta)}(\mathbb{A}),
\end{align*}
where
\(
\delta = f^{-1}\left( \sum\limits_{m=1}^n f(\delta_m) \right).
\)
Hence,
\(
\sum\limits_{m=1}^{n}f\left( \omega_{\delta_{m}}(A_{m}) \right) \leq f\left(n \omega_{(f,\delta)}(\mathbb{A}) \right) .
\)
\end{corollary}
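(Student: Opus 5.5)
The plan is to derive Corollary \ref{cor5} directly from Theorem \ref{thm4}, together with one structural property of $f^{-1}$: subadditivity on $[0,\infty)$. Set $a_m=\omega_{\delta_m}(A_m)\ge 0$ for $m=1,\ldots,n$ and $\delta=f^{-1}\bigl(\sum_{m=1}^n f(\delta_m)\bigr)$. With this notation the displayed chain reads
\[
f^{-1}\Bigl(\sum_{m=1}^n f(a_m)\Bigr)\le \sum_{m=1}^n f^{-1}(f(a_m))=\sum_{m=1}^n a_m\le n\,\omega_{(f,\delta)}(\mathbb{A}).
\]
Each link is handled separately.

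\textbf{Step 1 (first inequality).} This is subadditivity of $f^{-1}$ applied to $u_m=f(a_m)\ge 0$, i.e.
\[
f^{-1}(u_1+\cdots+u_n)\le f^{-1}(u_1)+\cdots+f^{-1}(u_n),
\]
obtained from the two-term case by induction. The two-term case follows from concavity of $g=f^{-1}$ together with $g(0)=0$. For $s,t>0$, concavity gives
\[
g(s)\ge \tfrac{s}{s+t}\,g(s+t)+\tfrac{t}{s+t}\,g(0)=\tfrac{s}{s+t}\,g(s+t),
\]
and similarly $g(t)\ge \tfrac{t}{s+t}\,g(s+t)$. Adding these yields $g(s)+g(t)\ge g(s+t)$. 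The hypothesis "concave increasing $f^{-1}$" is exactly what the paper invokes before the statement. It holds as soon as $f$ is convex, since the inverse of an increasing convex bijection of $[0,\infty)$ is concave. I will state this assumption explicitly, because it is the only place where anything beyond the standing assumptions on $f$ is used.

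\textbf{Step 2 (middle equality).} Since $f$ is a bijection of $[0,\infty)$, we have $f^{-1}(f(a_m))=a_m$.

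\textbf{Step 3 (last inequality).} This is exactly Theorem \ref{thm4} multiplied by $n$.

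\textbf{Step 4 (the final "hence").} Apply the increasing function $f$ to both ends of the chain. This gives
\[
\sum_{m=1}^n f(a_m)=f\Bigl(f^{-1}\Bigl(\sum_{m=1}^n f(a_m)\Bigr)\Bigr)\le f\bigl(n\,\omega_{(f,\delta)}(\mathbb{A})\bigr).
\]

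The only step with real content is Step 1, in particular justifying concavity of $f^{-1}$. I would first check that concavity is genuinely needed rather than automatic. For a general increasing $f$, subadditivity of $f^{-1}$ can fail; for example $f(t)=\sqrt t$ gives $f^{-1}(u)=u^2$, which is superadditive. So the corollary should be read under the assumption, implicit in the sentence preceding it, that $f^{-1}$ is concave (equivalently, $f$ is convex). If one wants to avoid this assumption, there is a fallback. Since $f$ is increasing, $f(a_m)\le f(\max_k a_k)$, so $\sum_m f(a_m)\le n f(\max_k a_k)$, and by Theorem \ref{thm4}'s proof $\max_k a_k\le\omega_{(f,\delta)}(\mathbb{A})$. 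This gives the weaker bound $\sum_m f(a_m)\le n\,f\bigl(\omega_{(f,\delta)}(\mathbb{A})\bigr)$, but it does not match the stated form.
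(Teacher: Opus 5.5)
Your chain is the paper's own argument: Theorem~\ref{thm4} multiplied by $n$, plus subadditivity of the concave increasing $f^{-1}$. Steps 1, 2 and 4 are fine. You are also right that concavity of $f^{-1}$ (e.g.\ convexity of $f$) is an extra hypothesis not covered by the standing assumptions, and that the first inequality fails without it.

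The genuine gap is Step 3, which you treat as routine. Theorem~\ref{thm4} is false, so the last inequality of the chain and the final ``hence'' are false too, and no argument can close this step. The proof of Theorem~\ref{thm4} asserts $\Delta_{\delta_m}(A_m)\subset\Delta_{(f,\delta)}(\mathbb{A})$ with $\delta=f^{-1}\bigl(\sum_k f(\delta_k)\bigr)$. But $\Vert A_m x\Vert\ge\delta_m$ for a single index $m$ only gives $f(\Vert\mathbb{A}x\Vert_f)\ge f(\delta_m)$. To reach $\sum_k f(\delta_k)$ you would need $\Vert A_k x\Vert\ge\delta_k$ for every $k$ at the same $x$.

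Here is a counterexample with $f(t)=t^2$, so $f^{-1}$ is concave and your Step 1 holds. Take $\mathcal{H}=\mathbb{C}^2$, $A_1=\begin{pmatrix}1&0\\0&0\end{pmatrix}$, $A_2=\begin{pmatrix}0&2\\0&0\end{pmatrix}$, $\delta_1=0$ and $\delta_2=2=\Vert A_2\Vert$, hence $\delta=2$.
\begin{itemize}
\item For a unit vector $x=(x_1,x_2)$, $\Vert\mathbb{A}x\Vert_{t^2}^2=|x_1|^2+4|x_2|^2=1+3|x_2|^2$. So $\Vert\mathbb{A}\Vert_{t^2}=2$, which makes $\delta$ admissible, and $\Delta_{(t^2,2)}(\mathbb{A})=\{x\in S_1(\mathcal{H}): x_1=0\}$.
\item On that set $\langle A_1x,x\rangle=|x_1|^2=0$ and $\langle A_2x,x\rangle=2x_2\bar{x}_1=0$, so $\omega_{(t^2,2)}(\mathbb{A})=0$.
\item On the other hand $\omega_0(A_1)=\omega(A_1)=1$ and $\omega_2(A_2)=0$.
\item Hence $\sum_m\omega_{\delta_m}(A_m)=1>0=n\,\omega_{(f,\delta)}(\mathbb{A})$, and $\sum_m f(\omega_{\delta_m}(A_m))=1>0=f(n\,\omega_{(f,\delta)}(\mathbb{A}))$.
\end{itemize}
Taking $\delta_2=2-\varepsilon$ with small $\varepsilon$ gives the same failure, so this is not a boundary artifact.

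Your fallback breaks for the same reason. It relies on $\max_k\omega_{\delta_k}(A_k)\le\omega_{(f,\delta)}(\mathbb{A})$, which here reads $1\le 0$.

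What is actually true is $\omega_{\delta_m}(A_m)\le\omega_{(f,\delta_m)}(\mathbb{A})$, because $\Vert\mathbb{A}x\Vert_f\ge\Vert A_mx\Vert$ and $|\langle A_mx,x\rangle|\le|\langle\mathbb{A}x,x\rangle|_f$. With that, your chain (for convex $f$) proves the corollary with $\delta$ replaced by $\min_m\delta_m$.
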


If we take $ f(t)=t^{2} $ and $ \delta_{m}=0, \ m=1,\ldots, n $ in Corollary \ref{cor5}, we obtain the following corollary.
\begin{corollary}
\label{cor6}
Let $  \mathbb{A} = (A_1, \ldots, A_n) \in \mathfrak{B}^{n}(\mathcal{H})$ be $n$-tuple of operators. Then
\[
\dfrac{1}{\sqrt{n}}\left( \sum\limits_{m=1}^{n}\omega^{2}(A_{m}) \right)^{1/2} \leq \omega (\mathbb{A}).
\]
and using the following well-known inequality
\(
\dfrac{1}{2}\Vert A_{m} \Vert \leq \omega (A_{m}), \ 1\leq m \leq n
\)
it follows that
\(
\dfrac{1}{2\sqrt{n}}\left( \sum\limits_{m=1}^{n}\Vert A_{m} \Vert^{2} \right)^{1/2} \leq \omega (\mathbb{A}).
\)

Also,
\(
\Vert \mathbb{A} \Vert \leq \left( \sum\limits_{m=1}^{n}\Vert A_{m} \Vert^{2} \right)^{1/2}
\)
yields
\(
\dfrac{1}{2\sqrt{n}} \Vert \mathbb{A} \Vert\leq \omega (\mathbb{A}),
\)
obtained in \textup{\cite{Muller2014}}.
\end{corollary}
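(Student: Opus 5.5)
The plan is to argue directly from the definitions with $f(t)=t^{2}$ and $\delta_m=0$. The route suggested by the wording, feeding these choices into Corollary \ref{cor5}, does not suffice. It only yields $\sum_{m}\omega^{2}(A_m)\le n^{2}\omega^{2}(\mathbb{A})$, which carries the factor $\frac1n$ rather than $\frac{1}{\sqrt n}$, because the averaging in Theorem \ref{thm4} throws too much away. Instead I would keep the key pointwise step from the proof of Theorem \ref{thm4}, namely that each coordinate is dominated by the joint quantity. For every $x\in S_1(\mathcal H)$ and every $m$,
\[
|\langle A_m x,x\rangle|^{2}\le \sum_{k=1}^{n}|\langle A_k x,x\rangle|^{2}\le \omega^{2}(\mathbb{A}).
\]
Taking the supremum over $x$ gives $\omega(A_m)\le\omega(\mathbb{A})$ for each $m=1,\dots,n$. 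This is exactly $\omega_{\delta_m}(A_m)\le\omega_{(f,\delta)}(\mathbb{A})$ from Theorem \ref{thm4} with $f(t)=t^2$ and $\delta=0$.

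Next I would square these $n$ inequalities and add them, which gives $\sum_{m=1}^{n}\omega^{2}(A_m)\le n\,\omega^{2}(\mathbb{A})$. Taking square roots yields the first assertion,
\[
\frac{1}{\sqrt n}\Bigl(\sum_{m=1}^{n}\omega^{2}(A_m)\Bigr)^{1/2}\le\omega(\mathbb{A}).
\]
The key point is that summing squares before taking roots is what produces $\sqrt n$ instead of $n$.

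For the second assertion I would insert the classical lower bound $\frac12\|A_m\|\le\omega(A_m)$ from \eqref{equ1} into each term of the left-hand side. Monotonicity of squaring and of the square root then gives $\frac{1}{2\sqrt n}\bigl(\sum_m\|A_m\|^{2}\bigr)^{1/2}\le\omega(\mathbb{A})$.

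For the final assertion I would use item 8 of Definition \ref{def1} together with the triangle inequality:
\[
\|\mathbb{A}\|^{2}=\Bigl\|\sum_{m}A_m^{*}A_m\Bigr\|\le\sum_m\|A_m^{*}A_m\|=\sum_m\|A_m\|^{2}.
\]
This gives $\|\mathbb{A}\|\le\bigl(\sum_m\|A_m\|^{2}\bigr)^{1/2}$. Chaining it with the previous bound recovers Müller's estimate $\frac{1}{2\sqrt n}\|\mathbb{A}\|\le\omega(\mathbb{A})$.

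The only real obstacle is the first step, because Corollary \ref{cor5} is too weak as stated. Once the pointwise domination $\omega(A_m)\le\omega(\mathbb{A})$ is used and the squares are summed directly, everything else is routine.
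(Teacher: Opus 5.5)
Your proof is correct, and it departs from the paper's argument in a way that matters. The paper obtains Corollary~\ref{cor6} just by substituting $f(t)=t^{2}$ and $\delta_m=0$ into Corollary~\ref{cor5}. As you point out, Corollary~\ref{cor5} is built on the \emph{averaged} bound $\frac{1}{n}\sum_{m}\omega_{\delta_m}(A_m)\le\omega_{(f,\delta)}(\mathbb{A})$ of Theorem~\ref{thm4} together with subadditivity of $f^{-1}$. The substitution therefore only gives $\bigl(\sum_{m}\omega^{2}(A_m)\bigr)^{1/2}\le n\,\omega(\mathbb{A})$, and hence only $\frac{1}{2n}\Vert\mathbb{A}\Vert\le\omega(\mathbb{A})$ instead of M\"uller's $\frac{1}{2\sqrt{n}}\Vert\mathbb{A}\Vert\le\omega(\mathbb{A})$. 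The loss is intrinsic to that route: the averaged inequality by itself is compatible with $\omega(A_1)=n\,\omega(\mathbb{A})$ and $\omega(A_m)=0$ for $m\ge 2$, so nothing derived from it alone can beat the factor $\frac{1}{n}$.

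Your argument instead uses the stronger per-coordinate domination $\max_m\omega(A_m)\le\omega(\mathbb{A})$ and then sums squares, which is exactly what produces $\frac{1}{\sqrt{n}}$. The paper does establish this domination inside the proof of Theorem~\ref{thm4}, just before averaging, and you verify it directly from the definitions. Your last two steps are the ones the paper intends: inserting $\frac{1}{2}\Vert A_m\Vert\le\omega(A_m)$, and bounding $\Vert\mathbb{A}\Vert^{2}=\Vert\sum_m A_m^{*}A_m\Vert\le\sum_m\Vert A_m\Vert^{2}$.

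So the paper's route buys a one-line specialization of a statement covering arbitrary $f$ and $\delta_m$, at the price of the constant, while yours actually proves the corollary as stated. Yours also generalizes just as cheaply. At $\delta=0$ the same domination gives $\sum_m f(\omega(A_m))\le n\,f\bigl(\omega_{(f,0)}(\mathbb{A})\bigr)$ for every admissible $f$. For convex $f$, where $n f(t)\le f(nt)$, this sharpens the last inequality of Corollary~\ref{cor5}.
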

\begin{theorem}
\label{thm5}
Let $  \mathbb{A} = (A_1, \ldots, A_n) \in \mathfrak{B}^{n}(\mathcal{H})$ be $n$-tuple of operators and $
\delta\leq \Vert \mathbb{A}  \Vert_{f}.$ Then
\[
f\left( \omega_{(f,\delta)}(\mathbb{A}) \right) \leq \sum\limits_{m=1}^{n}f\left( \omega_{\delta_{m}}(A_{m}) \right),
\]
where $ \delta_{m}=\delta-\tau_{m}, \ \tau_{m}=f^{-1}\left( \sum\limits_{\substack{k=1 \\ k\neq m}}^{n}f\left(  \Vert A_{k} \Vert\right)  \right) , \ m=1,\ldots, n. $
\end{theorem}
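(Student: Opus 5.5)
The plan follows the pattern of Theorems \ref{thm1}--\ref{thm4}. First we compare the admissible sets: every unit vector admissible for $\omega_{(f,\delta)}(\mathbb{A})$ should be admissible for each coordinate $\delta_m$-numerical radius. Then we compare the quantities being maximized, pointwise in $x$.

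\emph{Step 1 (inclusion of sets).} Fix $x\in\Delta_{(f,\delta)}(\mathbb{A})$, so that $f(\delta)\le \sum_{k=1}^n f(\Vert A_k x\Vert)$. For each $k$ we have $\Vert A_k x\Vert\le\Vert A_k\Vert$, and $f$ is increasing, so
\[
f(\delta)\le f(\Vert A_m x\Vert)+\sum_{k\neq m} f(\Vert A_k\Vert)=f(\Vert A_m x\Vert)+f(\tau_m).
\]
We want to conclude $\Vert A_m x\Vert\ge \delta-\tau_m=\delta_m$. Suppose instead that $\Vert A_m x\Vert<\delta-\tau_m$. Then $f(\delta)<f(\delta-\tau_m)+f(\tau_m)$. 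This contradicts superadditivity, $f(a)+f(b)\le f(a+b)$ for $a,b\ge 0$. Superadditivity holds when $f$ is convex with $f(0)=0$, since then $f(\alpha x)\le\alpha f(x)$, which the paper recorded in the introduction. So this step needs convexity (or superadditivity) of $f$, which I would add as a standing hypothesis, as the paper does implicitly elsewhere.

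If $\delta_m\le 0$, the condition $\Vert A_m x\Vert\ge\delta_m$ holds trivially, and by the convention stated earlier $\omega_{\delta_m}=\omega_{0}$. Also $\delta_m\le\Vert A_m\Vert$, since $\delta\le\Vert\mathbb{A}\Vert_f$ and $f(\Vert\mathbb{A}\Vert_f)\le\sum_k f(\Vert A_k\Vert)$, so each $\omega_{\delta_m}(A_m)$ is well defined. This gives
\[
\Delta_{(f,\delta)}(\mathbb{A})\subset\bigcap_{m=1}^n\{x\in S_1(\mathcal{H}):\Vert A_m x\Vert\ge\delta_m\}.
\]

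\emph{Step 2 (pointwise bound).} For $x\in\Delta_{(f,\delta)}(\mathbb{A})$, Step 1 shows that $\langle A_m x,x\rangle$ belongs to the set defining $W_{\delta_m}(A_m)$, for every $m$ simultaneously. Hence $|\langle A_m x,x\rangle|\le\omega_{\delta_m}(A_m)$. Since $f$ is increasing,
\[
f\bigl(\vert\langle\mathbb{A}x,x\rangle\vert_f\bigr)=\sum_{m=1}^n f(|\langle A_m x,x\rangle|)\le\sum_{m=1}^n f(\omega_{\delta_m}(A_m)).
\]

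\emph{Step 3 (supremum).} Take the supremum over $x\in\Delta_{(f,\delta)}(\mathbb{A})$. Since $f$ is continuous and increasing, $f(\sup)=\sup f$, and this yields $f(\omega_{(f,\delta)}(\mathbb{A}))\le\sum_m f(\omega_{\delta_m}(A_m))$.

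The main obstacle is Step 1, specifically the passage from $f(\delta)\le f(\Vert A_mx\Vert)+f(\tau_m)$ to $\Vert A_mx\Vert\ge\delta-\tau_m$. This is exactly where superadditivity of $f$ is needed. If the paper intends no convexity assumption, I would first check whether the argument can be rephrased with $\delta_m$ defined through $f^{-1}(f(\delta)-f(\tau_m))$, which is always at least $\delta-\tau_m$ under convexity. Without such an assumption, I expect the stated choice of $\delta_m$ to need this extra hypothesis.
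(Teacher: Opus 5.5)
Your argument is correct and is essentially the paper's proof: the inclusion $\Delta_{(f,\delta)}(\mathbb{A})\subset\bigcap_{m}\Delta_{\delta_m}(A_m)$, then the identity $f(\vert\langle\mathbb{A}x,x\rangle\vert_f)=\sum_m f(\vert\langle A_mx,x\rangle\vert)$, then a supremum. Your diagnosis of Step 1 is also right: the paper passes from $f^{-1}\bigl(f(\Vert A_mx\Vert)+f(\tau_m)\bigr)$ to $\Vert A_mx\Vert+\tau_m$ without comment, which is exactly subadditivity of $f^{-1}$ (superadditivity of $f$, e.g.\ from convexity), and this is genuinely needed, since for $f(t)=\sqrt t$, $A_1=A_2=I$ on $\mathbb{C}$ and $\delta=\Vert\mathbb{A}\Vert_f=4$ one gets $\delta_1=\delta_2=3>\Vert A_m\Vert$.
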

\begin{proof}  For any $ \delta\leq \Vert \mathbb{A} \Vert_{f}  $ and $ x\in \Delta_{(f,\delta)}(\mathbb{A}) $,
\begin{align*}
0 \leq \delta &\leq \Vert \mathbb{A} x \Vert_{f} = f^{-1}\left( \sum\limits_{m=1}^{n}f\left( \Vert A_{m} x\Vert \right)  \right)  \\
&\leq f^{-1}\left( f\left( \Vert A_{m} x\Vert \right)+ \left( \sum\limits_{\substack{k=1 \\ k\neq m}}^{n}f\left(  \Vert A_{k} \Vert\right)  \right) \right)  \\
&\leq \Vert A_{m} x\Vert + f^{-1}\left(\sum\limits_{\substack{k=1 \\ k\neq m}}^{n}f\left(  \Vert A_{k} \Vert\right)   \right) \\
&= \Vert A_{m} x\Vert + \tau_{m}, \ \tau_{m}=f^{-1}\left( \sum\limits_{\substack{k=1 \\ k\neq m}}^{n}f\left(  \Vert A_{k} \Vert\right)  \right) , \ m=1,\ldots, n.
\end{align*}
Hence,
\(
\delta-\tau_{m}\leq \Vert A_{m}x \Vert, \ m=1,\ldots,n.
\)
This means that for each $ m=1,\ldots,n $,
\(
x\in \Delta_{\delta_{m}}(A_{m}), \ \delta_{m}=\delta-\tau_{m}.
\)
Finally, it follows that
\[
\Delta_{(f,\delta)}(\mathbb{A})\subset\bigcap \limits_{m=1}^{n}\Delta_{\delta_{m}}(A_{m}).
\]
Thus, from
\(
f\left( \vert \langle \mathbb{A} x,x \rangle \vert_{f}  \right) = \sum\limits_{m=1}^{n} f\left( \vert \langle A_{m} x,x \rangle \vert  \right)
\)
it is clear that
\(
f\left( \omega_{(f,\delta)}(\mathbb{A}) \right) \leq \sum\limits_{m=1}^{n}f\left( \omega_{\delta_{m}}(A_{m}) \right).
\)
\end{proof}

If we take $ f(t)=t^{p}, \ 0\leq t <\infty, \ 1\leq p <\infty $ in Theorem \ref{thm5}, then we obtain the following corollary.
\begin{corollary}
\label{cor7}
Let \( \mathbb{A} = (A_1, \ldots, A_n) \in \mathfrak{B}^{n}(\mathcal{H})$ be $n$-tuple of operators and $ \ \delta\leq \Vert \mathbb{A} \Vert_{f} \). Then
\(
\omega_{(t^{p},\delta )}^{p}(\mathbb{A}) \leq \sum\limits_{m=1}^{n} \omega_{\delta_{m}}^{p}(A_{m}),
\)
where $ \delta_{m}=\delta -\tau_{m}, \ \tau_{m}=\left( \sum\limits_{\substack{k=1 \\ k\neq m}}^{n} \Vert A_{k} \Vert^{p}\right) ^{1/p}. $
Hence,
\(
\omega_{(f,\delta )}(\mathbb{A}) \leq \left( \sum\limits_{m=1}^{n} \Vert A_{m} \Vert^{p}\right)^{1/p} .
\)
\end{corollary}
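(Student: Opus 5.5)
The plan is to specialize Theorem \ref{thm5} to the power function $f(t)=t^{p}$, $0\leq t<\infty$, $1\leq p<\infty$. Then I will bound each coordinate term by the operator norm of $A_m$.

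First, check that $f(t)=t^{p}$ satisfies the standing hypotheses: it is continuous, strictly increasing and surjective from $[0,\infty)$ onto $[0,\infty)$, with $f^{-1}(s)=s^{1/p}$. The proof of Theorem \ref{thm5} used the inequality
\[
f^{-1}(a+b)\leq f^{-1}(a)+f^{-1}(b),\qquad a,b\geq 0.
\]
For $f^{-1}(s)=s^{1/p}$ this holds because $s\mapsto s^{1/p}$ is concave with value $0$ at $0$ when $p\geq 1$, so it is subadditive. So Theorem \ref{thm5} applies verbatim.

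Second, substitute into the formulas of Theorem \ref{thm5}. We have $f(\omega_{(f,\delta)}(\mathbb{A}))=\omega_{(t^{p},\delta)}^{p}(\mathbb{A})$ and $f(\omega_{\delta_m}(A_m))=\omega_{\delta_m}^{p}(A_m)$. The shift becomes
\[
\tau_m=f^{-1}\Bigl(\sum_{k\neq m}f(\Vert A_k\Vert)\Bigr)=\Bigl(\sum_{k\neq m}\Vert A_k\Vert^{p}\Bigr)^{1/p}.
\]
This gives the first displayed inequality of the corollary directly.

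For the final bound, use the monotonicity already noted in the paper: $\omega_{\delta_m}(A_m)\leq\omega_{0}(A_m)=\omega(A_m)\leq\Vert A_m\Vert$. If $\delta_m<0$, the convention stated earlier identifies $\omega_{\delta_m}$ with $\omega_{0}$, so the same bound holds. Summing the $p$-th powers and taking $p$-th roots, which is legitimate since $s\mapsto s^{1/p}$ is increasing, yields
\[
\omega_{(t^{p},\delta)}(\mathbb{A})\leq\Bigl(\sum_{m=1}^{n}\Vert A_m\Vert^{p}\Bigr)^{1/p}.
\]

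The only delicate point is bookkeeping on the admissible range of the parameters. The values $\delta_m=\delta-\tau_m$ may be negative, and they may not obviously satisfy $\delta_m\leq\Vert A_m\Vert$. I would handle this with two observations:
\begin{itemize}
\item Every $x\in\Delta_{(f,\delta)}(\mathbb{A})$ satisfies $\Vert A_m x\Vert\geq\delta_m$, so each set $\Delta_{\delta_m}(A_m)$ is nonempty whenever $\Delta_{(f,\delta)}(\mathbb{A})$ is. Hence each supremum $\omega_{\delta_m}(A_m)$ is taken over a nonempty set.
\item Negative parameters reduce to the case $\delta=0$, by the convention stated earlier.
\end{itemize}
Everything else is direct substitution.
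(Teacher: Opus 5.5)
Your proposal is correct and follows the paper's route: the paper obtains this corollary by substituting $f(t)=t^{p}$ into Theorem~\ref{thm5}, and the final bound follows from $\omega_{\delta_m}(A_m)\leq\omega(A_m)\leq\Vert A_m\Vert$, exactly as you describe. Your explicit check that $s\mapsto s^{1/p}$ is subadditive for $p\geq 1$ is a worthwhile addition, because the proof of Theorem~\ref{thm5} uses $f^{-1}(a+b)\leq f^{-1}(a)+f^{-1}(b)$ without listing it as a hypothesis.
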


If we take $ n=1 $ and $ \delta=0 $ in Corollary \ref{cor7}, then we obtain the following well-known inequality.
\begin{corollary}
\label{cor8}
Let $  A_1 \in \mathfrak{B}(\mathcal{H})$. Then,
\(
\omega (A_{1})\leq \Vert A_{1} \Vert.
\)
\end{corollary}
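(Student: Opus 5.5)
Corollary \ref{cor8} is the case $n=1$, $\delta=0$ of Corollary \ref{cor7}, so the plan is simply to specialise and check that each quantity reduces to its classical counterpart. Corollary \ref{cor7} comes from Theorem \ref{thm5} with $f(t)=t^{p}$, so I will first read off what its two conclusions become when $n=1$, $\delta=0$.

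For $n=1$ the sum defining $\tau_{1}$ runs over $k\neq 1$ and is therefore empty. The plan is to read this empty sum as $0$, so that $\tau_{1}=f^{-1}(0)=0$ (recall $f(0)=0$) and $\delta_{1}=\delta-\tau_{1}=0$. Then $\omega_{\delta_1}(A_1)=\omega_0(A_1)$. Since $W_{0}(A_{1})$ is the closure of the usual numerical range, $\omega_{0}(A_{1})=\omega(A_{1})$.

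On the left-hand side, for $n=1$ we have $\vert\langle A_1x,x\rangle\vert_{t^p}=\vert\langle A_1x,x\rangle\vert$ and $\Delta_{(t^p,0)}(A_1)=S_1(\mathcal{H})$. Hence $\omega_{(t^{p},0)}(A_{1})=\omega(A_{1})$, and the first inequality of Corollary \ref{cor7} reduces to the trivial $\omega^p(A_1)\le\omega^p(A_1)$. The content must therefore come from the second conclusion, $\omega_{(f,\delta)}(\mathbb{A})\le(\sum_m\Vert A_m\Vert^p)^{1/p}$, which for $n=1$ reads $\omega(A_1)\le\Vert A_1\Vert$.

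The step I expect to be the real obstacle is justifying that second conclusion, since it is asserted in Corollary \ref{cor7} without an explicit argument. I would check it directly: $\omega_{\delta_m}(A_m)\le\Vert A_m\Vert$, because $\vert\langle A_mx,x\rangle\vert\le\Vert A_mx\Vert\le\Vert A_m\Vert$ by Cauchy--Schwarz for $x\in S_1(\mathcal{H})$, and this bound persists under closure. Summing the $p$-th powers and taking $p$-th roots then gives the second conclusion. In the case $n=1$ this is exactly the Cauchy--Schwarz estimate, which I would present as the self-contained proof and use to cross-check the specialisation.
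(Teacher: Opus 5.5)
Your proposal is correct and follows the paper's route: specialise Corollary~\ref{cor7} to $n=1$, $\delta=0$. There $\tau_1=0$ and both sides of the first inequality collapse to $\omega^p(A_1)$, so the content sits in the second clause, $\omega_{(f,\delta)}(\mathbb{A})\le\bigl(\sum_m\Vert A_m\Vert^p\bigr)^{1/p}$. You rightly note that this clause tacitly uses $\omega_{\delta_m}(A_m)\le\Vert A_m\Vert$, which is essentially the inequality being derived, so your direct Cauchy--Schwarz estimate $|\langle A_1x,x\rangle|\le\Vert A_1x\Vert\le\Vert A_1\Vert$ on $S_1(\mathcal{H})$ is what actually closes the argument.
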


The next statement shows the relationship between the $(f,\delta)$-numerical radius of $n$-tuple of operators and that of its modulus.
\begin{theorem}
\label{thm6}
Let $ f $ be geometrically convex, $  \mathbb{A} = (A_1, \ldots, A_n) \in \mathfrak{B}^{n}(\mathcal{H}) $ be $n$-tuple of operators and   $
0\leq \delta\leq \Vert \mathbb{A}  \Vert_{f}.$ Then,
\[
f\left( \omega_{(f,\delta )}(\mathbb{A}  ) \right) \leq \sqrt{n}f^{1/2}\left( \Vert \ \vert \mathbb{A}  \vert \ \Vert \right) f^{1/2}\left( \omega_{(f,\delta)}(\vert \mathbb{A} \vert) \right).
\]
\end{theorem}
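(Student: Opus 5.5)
The plan is a pointwise estimate for each $x\in\Delta_{(f,\delta)}(\mathbb{A})$, followed by a comparison of the admissible sets and a passage to suprema, as in the proofs of Theorems \ref{thm2} and \ref{thm3}. The interpretation of $\Vert\,\vert\mathbb{A}\vert\,\Vert$ in the statement needs fixing first. In the proof of Theorem \ref{thm3} the natural quantity that appears is $\Vert\,\vert\mathbb{A}\vert\,\Vert_f$. I would read the statement's $\Vert\,\vert\mathbb{A}\vert\,\Vert$ that way, or else bound each $f(\Vert\,\vert A_m\vert\,\Vert)$ by $f(\Vert\,\vert\mathbb{A}\vert\,\Vert_f)$, which is all that is needed. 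By Lemma \ref{lem1} this also equals $f(\Vert\mathbb{A}\Vert_f)$.

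\textbf{Pointwise bound.} Fix $x\in S_1(\mathcal{H})$. I would proceed in four steps.

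First, by Cauchy--Schwarz, $\vert\langle A_mx,x\rangle\vert\le\Vert A_mx\Vert=\Vert\,\vert A_m\vert x\Vert$.

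Second, Lemma \ref{lem4} applied to $\vert A_m\vert\ge0$ gives
\[
\Vert\,\vert A_m\vert x\Vert\le \Vert\,\vert A_m\vert\,\Vert^{1/2}\langle\vert A_m\vert x,x\rangle^{1/2}.
\]

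Third, since $f$ is increasing and geometrically convex,
\[
f(\vert\langle A_mx,x\rangle\vert)\le f^{1/2}(\Vert\,\vert A_m\vert\,\Vert)\,f^{1/2}(\langle\vert A_m\vert x,x\rangle).
\]
Each factor $f(\Vert\,\vert A_m\vert\,\Vert)$ is at most $\sum_k f(\Vert\,\vert A_k\vert\,\Vert)$, and this sum is in turn controlled by $f(\Vert\,\vert\mathbb{A}\vert\,\Vert_f)$; this is the same step as in Theorem \ref{thm3}.

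Fourth, sum over $m$ and apply the Cauchy--Schwarz inequality $\sum_m a_m^{1/2}\le\sqrt n(\sum_m a_m)^{1/2}$. This yields
\[
f(\vert\langle\mathbb{A}x,x\rangle\vert_f)\le\sqrt n\,f^{1/2}(\Vert\,\vert\mathbb{A}\vert\,\Vert)\,f^{1/2}\bigl(\vert\langle\vert\mathbb{A}\vert x,x\rangle\vert_f\bigr),
\]
using $f(\vert\langle\vert\mathbb{A}\vert x,x\rangle\vert_f)=\sum_m f(\langle\vert A_m\vert x,x\rangle)$.

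\textbf{Admissible sets.} From the identity $\Vert A_mx\Vert=\Vert\,\vert A_m\vert x\Vert$, exactly as in \eqref{equ15}, we get $\Delta_{(f,\delta)}(\mathbb{A})=\Delta_{(f,\delta)}(\vert\mathbb{A}\vert)$. The range condition $\delta\le\Vert\,\vert\mathbb{A}\vert\,\Vert_f$ holds by Lemma \ref{lem1}.

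\textbf{Suprema.} For $x$ in the common set, the factor $f^{1/2}(\vert\langle\vert\mathbb{A}\vert x,x\rangle\vert_f)$ is at most $f^{1/2}(\omega_{(f,\delta)}(\vert\mathbb{A}\vert))$. Taking the supremum over $x\in\Delta_{(f,\delta)}(\mathbb{A})$ on the left, and using continuity and monotonicity of $f$ so that $\sup f(\cdot)=f(\sup\cdot)$, gives the claim.

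The main obstacle is the constant step: replacing the individual norms $\Vert\,\vert A_m\vert\,\Vert$ by a single tuple quantity without losing more than the factor $\sqrt n$. If one insists on the literal operator norm $\Vert\,\vert\mathbb{A}\vert\,\Vert=\Vert\sum_m A_m^*A_m\Vert^{1/2}$, I would use $\Vert\,\vert A_m\vert\,\Vert\le\Vert\,\vert\mathbb{A}\vert\,\Vert$, which holds because $A_m^*A_m\le\sum_k A_k^*A_k$, together with monotonicity of $f$. The rest of the argument is the same.
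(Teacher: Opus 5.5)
Your proposal is correct and follows the paper's proof step for step. The steps are Cauchy--Schwarz, $\Vert A_mx\Vert=\Vert\,\vert A_m\vert x\Vert$, Lemma~\ref{lem4}, geometric convexity, the $\sqrt n$ Cauchy--Schwarz estimate, the identity $\Delta_{(f,\delta)}(\mathbb{A})=\Delta_{(f,\delta)}(\vert\mathbb{A}\vert)$, and the passage to suprema.

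No reinterpretation of $\Vert\,\vert\mathbb{A}\vert\,\Vert$ is needed. Definition~\ref{def1} already gives $\Vert\,\vert\mathbb{A}\vert\,\Vert=\Vert\sum_m A_m^*A_m\Vert^{1/2}$. Your closing bound $\Vert\,\vert A_m\vert\,\Vert\le\Vert\,\vert\mathbb{A}\vert\,\Vert$, obtained from $A_m^*A_m\le\sum_kA_k^*A_k$, is exactly the step the paper uses.

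One claim in your third step is false, however. In general $\sum_k f(\Vert\,\vert A_k\vert\,\Vert)$ is \emph{larger} than $f(\Vert\,\vert\mathbb{A}\vert\,\Vert_f)$, not smaller, because a sum of suprema dominates the supremum of the sum. For example, two orthogonal rank-one projections with $f(t)=t^2$ give $2$ versus $1$. So if you keep the $\Vert\cdot\Vert_f$ variant, obtain $f(\Vert\,\vert A_m\vert\,\Vert)\le f(\Vert\,\vert\mathbb{A}\vert\,\Vert_f)$ directly: use $f(\Vert\,\vert A_m\vert x\Vert)\le f(\Vert\,\vert\mathbb{A}\vert x\Vert_f)$ for each unit $x$, then take the supremum.
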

\begin{proof}
Since $ f $ geometrically convex, then by Lemma \ref{lem4} for any $ x\in S_{1}(\mathcal{H}) $
\begin{align}
\label{equ16}
f\left( \vert \langle \mathbb{A}x,x \rangle  \vert_{f}  \right)  & = \sum\limits_{m=1}^{n} f\left( \vert \langle A_{m}x,x \rangle  \vert  \right) \leq \sum\limits_{m=1}^{n}f\left( \Vert A_{m}x \Vert \right) \nonumber \\
&=\sum\limits_{m=1}^{n}f\left( \langle A_{m}^{*}A_{m}x,x\rangle^{1/2} \right)  =\sum\limits_{m=1}^{n}f\left(\Vert \ \vert A_{m}\vert x \Vert\right) \nonumber  \\
&\leq \sum\limits_{m=1}^{n}f\left(  \Vert \ \vert A_{m} \vert \ \Vert^{1/2}\langle \vert A_{m}\vert x,x\rangle^{1/2}\right) \nonumber \\
&= \sum\limits_{m=1}^{n} f^{1/2}\left(  \Vert \ \vert A_{m} \vert \ \Vert\right) f^{1/2}\left( \langle \vert A_{m}\vert x,x\rangle\right) \nonumber  \\
&\leq f^{1/2}\left( \Vert \ \vert \mathbb{A} \vert \ \Vert \right) \sum\limits_{m=1}^{n} f^{1/2}\left( \langle \vert A_{m}\vert x,x\rangle\right) \nonumber  \\
&\leq \sqrt{n} f^{1/2}\left( \Vert \ \vert \mathbb{A} \vert \ \Vert \right) \left( \sum\limits_{m=1}^{n} f\left( \vert \langle \vert A_{m}\vert x,x\rangle\vert\right)\right) ^{1/2} \nonumber  \\
&=\sqrt{n} f^{1/2}\left( \Vert \ \vert \mathbb{A} \vert \ \Vert \right) f^{1/2}\left( \vert \langle \vert \mathbb{A}\vert x,x\rangle \Vert_{f}\right).
\end{align}
On the other hand, since for $ x\in S_{1}(\mathcal{H}),$
\begin{multline*}
f(\delta) \leq f\left( \Vert \mathbb{A}x \Vert_{f} \right) = \sum\limits_{m=1}^{n} f\left( \Vert A_{m}x \Vert \right)
= \sum\limits_{m=1}^{n} f\left( \langle A_{m}^{*}A_{m}x,x\rangle^{1/2} \right) \\
 = \sum\limits_{m=1}^{n}f\left( \Vert \vert A_{m} \vert x  \Vert\right) = f\left( \Vert \ \vert \mathbb{A} \vert x \ \Vert_{f} \right) ,
\end{multline*}
it follows that
\begin{equation}
\label{equ17}
\Delta_{(f,\delta)}(\mathbb{A}) = \Delta_{(f,\delta)}( \vert \mathbb{A} \vert).
\end{equation}
Consequently, \eqref{equ16} and \eqref{equ17} imply
\[
f\left( \omega_{(f,\delta )}(\mathbb{A}  ) \right) \leq \sqrt{n}f^{1/2}\left( \Vert \ \vert \mathbb{A}\vert \ \Vert \right) f^{1/2}\left( \omega_{(f,\delta)}(\vert \mathbb{A} \vert) \right).
\tag*{\qedhere}\]
\end{proof}

Now, it will be given a result that established the relationship between the $(f,\delta)$-numerical radius of an operator and its real and imaginary parts.
\begin{theorem}
\label{thm7}
Let $ f $ be geometrically convex, $\mathbb{A}= (A_1, \ldots, A_n) \in \mathfrak{B}^{n}(\mathcal{H})$ be $n$-tuple of operators, and $0\leq \delta \leq \Vert \mathbb{A}\Vert_f$. Then
\[
f^{2}\left( \omega_{(f,\delta )}(\mathbb{A}  ) \right) \leq \sqrt{\dfrac{nf(1)}{2}}f\left(  \omega_{(f,\delta^{*})}\left( \mathbb{A}^{*}\mathbb{A}+\mathbb{A}\mathbb{A}^{*} \right) \right) ,
\]
where $ \delta^{*}=\dfrac{f^{2}(\delta)}{nf(1)}. $
\end{theorem}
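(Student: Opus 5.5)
The plan is to follow the template of Theorem \ref{thm2}, replacing $A_m^*A_m$ by $A_m^*A_m+A_mA_m^*$. There are two ingredients: a pointwise inequality for $f(\vert\langle \mathbb{A}x,x\rangle\vert_f)$, and an inclusion of the constraint sets $\Delta$. The supremum over $\Delta_{(f,\delta)}(\mathbb{A})$ is then taken at the end.

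\textbf{Pointwise bound.} Fix $x\in S_1(\mathcal{H})$ and $m$. Use the elementary estimate
\[
\vert\langle A_mx,x\rangle\vert\le \tfrac12\bigl(\Vert A_mx\Vert+\Vert A_m^*x\Vert\bigr)\le \Bigl(\tfrac12\bigl(\Vert A_mx\Vert^2+\Vert A_m^*x\Vert^2\bigr)\Bigr)^{1/2}.
\]
The first step holds because $\vert\langle A_mx,x\rangle\vert$ is bounded by both $\Vert A_mx\Vert$ and $\Vert A_m^*x\Vert$; the second is the quadratic-mean inequality. This gives
\[
\vert\langle A_mx,x\rangle\vert\le \Bigl(\tfrac12\langle (A_m^*A_m+A_mA_m^*)x,x\rangle\Bigr)^{1/2}.
\]
Apply the increasing function $f$, then geometric convexity in the form $f(\sqrt{ab})\le\sqrt{f(a)f(b)}$. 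Taking $a=\frac12$ and $b=\langle(A_m^*A_m+A_mA_m^*)x,x\rangle$ produces a factor $f^{1/2}(\tfrac12)$, which we then want to compare with $\sqrt{f(1)/2}$; doing that requires convexity-type information such as $f(\tfrac12)\le\tfrac12 f(1)$. Alternatively, follow the paper's pattern $f(s^{1/2})\le \sqrt{f(1)}f^{1/2}(s)$ (geometric convexity with $a=1$, $b=s$) and absorb the $\tfrac12$ separately. Next sum over $m$ and apply Cauchy--Schwarz, $\sum_m f^{1/2}(\cdot)\le\sqrt n(\sum_m f(\cdot))^{1/2}$. Finally identify $\sum_m f(\langle(A_m^*A_m+A_mA_m^*)x,x\rangle)=f(\vert\langle(\mathbb{A}^*\mathbb{A}+\mathbb{A}\mathbb{A}^*)x,x\rangle\vert_f)$, where $\mathbb{A}^*\mathbb{A}+\mathbb{A}\mathbb{A}^*$ is understood componentwise as in Theorem \ref{thm2}. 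Squaring yields $f^2(\vert\langle\mathbb{A}x,x\rangle\vert_f)\le c\,f(\vert\langle(\mathbb{A}^*\mathbb{A}+\mathbb{A}\mathbb{A}^*)x,x\rangle\vert_f)$ with a constant $c$ of the stated type.

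\textbf{Set inclusion.} Show that $\Delta_{(f,\delta)}(\mathbb{A})\subset\Delta_{(f,\delta^*)}(\mathbb{A}^*\mathbb{A}+\mathbb{A}\mathbb{A}^*)$. Exactly as in the proof of Theorem \ref{thm2}, for $x\in\Delta_{(f,\delta)}(\mathbb{A})$ we have
\[
f(\delta)\le\sum_m f(\Vert A_mx\Vert)\le\sum_m f\bigl(\Vert (A_m^*A_m+A_mA_m^*)x\Vert^{1/2}\bigr).
\]
This uses $\Vert A_mx\Vert^2=\langle A_m^*A_mx,x\rangle\le\langle(A_m^*A_m+A_mA_m^*)x,x\rangle\le\Vert(A_m^*A_m+A_mA_m^*)x\Vert$, valid since $A_mA_m^*\ge0$. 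Then geometric convexity and Cauchy--Schwarz give $f(\delta)\le\sqrt{nf(1)}\,f^{1/2}(\Vert(\mathbb{A}^*\mathbb{A}+\mathbb{A}\mathbb{A}^*)x\Vert_f)$, that is, the threshold $\delta^*=f^2(\delta)/(nf(1))$. Combining the pointwise bound with this inclusion and taking suprema gives the claim.

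\textbf{Main obstacle.} The hard step is the constant bookkeeping: producing exactly $\sqrt{nf(1)/2}$ in the pointwise bound while keeping $\delta^*$ without the $\tfrac12$. I expect to need $f(\tfrac12 t)\le\tfrac12 f(t)$, which holds when $f$ is convex (as noted in the paper's preliminaries), or else to accept a slightly different constant. I would first try handling the factor $\tfrac12$ through $f(\sqrt{\tfrac12\,s})\le f^{1/2}(\tfrac12)f^{1/2}(s)$ and then bounding $f(\tfrac12)\le\tfrac12f(1)$.
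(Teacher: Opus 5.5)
\textbf{The constant.} Your argument follows the paper's route. Your averaging/quadratic-mean step gives the same bound $\vert\langle A_mx,x\rangle\vert^2\le\frac12\langle(A_m^*A_m+A_mA_m^*)x,x\rangle$ that the paper obtains from $\mathrm{Re}^2A_m+\mathrm{Im}^2A_m$. The proof breaks at ``Squaring yields \dots{} with a constant $c$ of the stated type.'' Your unsquared estimate is $f(\vert\langle\mathbb{A}x,x\rangle\vert_f)\le\sqrt{nf(\frac12)}\,f^{1/2}(\vert\langle(\mathbb{A}^*\mathbb{A}+\mathbb{A}\mathbb{A}^*)x,x\rangle\vert_f)$. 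Squaring gives $c=nf(\frac12)$, and at best $c=\frac{nf(1)}{2}$ after using $f(\frac12)\le\frac12f(1)$. That is the \emph{square} of the printed constant $\sqrt{nf(1)/2}$, and it is strictly larger once $nf(1)>2$.

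No bookkeeping can close this, because the printed inequality is false. Take $f(t)=t$ (geometrically convex), $n=3$, $A_1=A_2=A_3=I$, $\delta=0$. Then $\omega_{(f,0)}(\mathbb{A})=3$, so the left side is $9$. On the right, $\mathbb{A}^*\mathbb{A}+\mathbb{A}\mathbb{A}^*=(2I,2I,2I)$ has $\omega_{(f,0)}$ equal to $6$, so the right side is $6\sqrt{3/2}\approx 7.35$. The paper's proof has the same slip: its estimate \eqref{equ18} squares to $\frac{nf(1)}{2}$, not $\sqrt{nf(1)/2}$. Also, as you suspected, geometric convexity alone does not give $f(\frac12)\le\frac12 f(1)$. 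For example, $f(t)=\sqrt t$ is geometrically convex, yet $f(\frac12)=f(1)/\sqrt2$.

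\textbf{The set inclusion.} This step has a second, independent slip. What you actually derive is $f(\Vert(\mathbb{A}^*\mathbb{A}+\mathbb{A}\mathbb{A}^*)x\Vert_f)\ge f^2(\delta)/(nf(1))=\delta^*$. That means $\Vert(\mathbb{A}^*\mathbb{A}+\mathbb{A}\mathbb{A}^*)x\Vert_f\ge f^{-1}(\delta^*)$, not $\ge\delta^*$. Since $\omega_{(f,\tau)}$ is nonincreasing in $\tau$, this yields threshold $\delta^*$ only when $f^{-1}(\delta^*)\ge\delta^*$. The inclusion you state fails, for example, with $f(t)=t^2$, $n=1$, $A_1=2I$, $\delta=2$. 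Here $\delta^*=16$, but $\Vert(A_1^*A_1+A_1A_1^*)x\Vert=8$ for every unit $x$. So $\Delta_{(f,\delta^*)}(\mathbb{A}^*\mathbb{A}+\mathbb{A}\mathbb{A}^*)=\emptyset$, while $\Delta_{(f,\delta)}(\mathbb{A})=S_1(\mathcal{H})$. The paper's own \eqref{equ19} also arrives at $f^{-1}(f^2(\delta)/(nf(1)))$.

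\textbf{What you can prove.} Using only geometric convexity, your argument establishes
\[
f^{2}\bigl(\omega_{(f,\delta)}(\mathbb{A})\bigr)\le nf(\tfrac12)\,f\Bigl(\omega_{(f,f^{-1}(\delta^{*}))}\bigl(\mathbb{A}^{*}\mathbb{A}+\mathbb{A}\mathbb{A}^{*}\bigr)\Bigr).
\]
When $f$ is convex, $nf(\frac12)\le\frac{nf(1)}{2}$. This bound is attained in the $n=3$ example above, and $\frac n2$ is the constant used in Corollary~\ref{cor9}. You should state and prove this corrected version rather than the printed one.
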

\begin{proof}  Because $ f $ is geometrically convex, it is clear that for any $ x\in S_{1}(\mathcal{H}) $ and  $ m=1,\ldots,n $,
\begin{align*}
f\left(  \vert \langle A_{m}x,x\rangle \vert \right) &= f\left( \left( \vert \langle Re A_{m}x,x\rangle \vert ^{2} + \vert \langle ImA_{m}x,x\rangle \vert^{2}  \right)^{1/2}  \right) \\
&\leq f\left( \left( \Vert ReA_{m}x \Vert^{2}+ \Vert ImA_{m}x \Vert^{2}\right)^{1/2}   \right) \\
&= f\left( \langle \left( Re^{2}A_{m}+Im^{2}A_{m} \right)x,x\rangle^{1/2}  \right)  \\
&\leq f^{1/2}(1) f^{1/2}\left( \vert \langle \left( Re^{2}A_{m}+Im^{2}A_{m} \right)x,x  \rangle \vert \right).
\end{align*}
Hence, we obtain that
\begin{align*}
\sum\limits_{m=1}^{n}f\left( \vert \langle A_{m}x,x\rangle \vert \right)  &\leq f^{1/2}(1) \sum\limits_{m=1}^{n}f^{1/2}\left( \vert \langle \left( Re^{2}A_{m}+Im^{2}A_{m} \right)x,x  \rangle \vert \right) \\
&\leq f^{1/2}(1) n^{1/2} \left(\sum\limits_{m=1}^{n} f\left(\vert \langle \left( Re^{2}A_{m}+Im^{2}A_{m} \right)x,x\rangle  \vert \right)  \right) ^{1/2},
\end{align*}
that is,
\begin{align}
\label{equ18}
& f^{-1}\left( \sum\limits_{m=1}^{n}f\left( \vert \langle A_{m}x,x\rangle \vert \right)  \right)  \nonumber \\
&\leq f^{-1}\left( f^{1/2}(1) n^{1/2} \left(\sum\limits_{m=1}^{n} f\left(\vert \langle \left( Re^{2}A_{m}+Im^{2}A_{m} \right)x,x\rangle  \vert \right)  \right) ^{1/2} \right) \nonumber \\
&= f^{-1}\left( f^{1/2}(1) n^{1/2} \left(\sum\limits_{m=1}^{n} f\left(\vert \langle \left( \dfrac{A_{m}^{*}A_{m}+A_{m}A_{m}^{*}}{2} \right)x,x\rangle  \vert \right)  \right) ^{1/2} \right) \nonumber \\
&\leq f^{-1}\left( f^{1/2}(1) \sqrt{\dfrac{n}{2}} \left(\sum\limits_{m=1}^{n} f\left(\vert \langle \left( A_{m}^{*}A_{m}+A_{m}A_{m}^{*} \right)x,x\rangle  \vert \right)  \right) ^{1/2} \right)\nonumber \\
&= f^{-1}\left( f^{1/2}(1) \sqrt{\dfrac{n}{2}} \left( f\left(\vert \langle \left( \mathbb{A}^{*}\mathbb{A}+\mathbb{A}\mathbb{A}^{*} \right)x,x\rangle  \vert_{f} \right)  \right) ^{1/2} \right) \nonumber\\
&= f^{-1}\left( \sqrt{\dfrac{nf(1)}{2}}\sqrt{f\left(\vert \langle \left( \mathbb{A}^{*}\mathbb{A}+\mathbb{A}\mathbb{A}^{*} \right)x,x\rangle  \vert_{f} \right)} \right).
\end{align}
On the other hand, for each $ x\in S_{1}(H), $ since
\begin{align*}
f(\delta)&\leq f\left( \Vert \mathbb{A}x \Vert_{f} \right) = \sum\limits_{m=1}^{n} f\left( \Vert A_{m}x \Vert \right) \leq \sum\limits_{m=1}^{n}f\left( \Vert ReA_{m}x+iImA_{m}x \Vert \right)  \\
&\leq \sum\limits_{m=1}^{n} f\left(  \Vert ReA_{m}x \Vert + \Vert ImA_{m}x \Vert\right) \\
& \leq \sum\limits_{m=1}^{n} f\left(  \sqrt{2}\left( \Vert ReA_{m}x \Vert^{2} + \Vert ImA_{m}x \Vert^{2}\right)^{1/2} \right) \\
&= \sum\limits_{m=1}^{n} f\left(  \sqrt{2}\langle \left( Re^{2}A_{m}+ Im^{2}A_{m} \right)x,x\rangle^{1/2} \right) \\
&\leq \sum\limits_{m=1}^{n}f\left( \sqrt{2} \Vert \left( Re^{2}A_{m}+Im^{2}A_{m}\right) x \Vert^{1/2}\right) \\
&= \sum\limits_{m=1}^{n}f\left( \sqrt{2}\Vert \left( \dfrac{A_{m}^{*}A_{m}+A_{m}A_{m}^{*}}{2} \right) x \Vert^{1/2}  \right) \\
&= \sum\limits_{m=1}^{n}f\left( \Vert \left(  A_{m}^{*}A_{m}+A_{m}A_{m}^{*}\right) x \Vert^{1/2} \right)   \\
&\leq \sum\limits_{m=1}^{n}f^{1/2}(1) f^{1/2} \left( \Vert \left(  A_{m}^{*}A_{m}+A_{m}A_{m}^{*}\right) x \Vert \right) \\
&= f^{1/2}(1)\sqrt{n} \left( \sum\limits_{m=1}^{n}f\left( \Vert \left(  A_{m}^{*}A_{m}+A_{m}A_{m}^{*}\right) x \Vert \right)  \right)^{1/2} \\
&= f^{1/2}(1)\sqrt{n} \left( f \left( \Vert \left(  \mathbb{A}^{*}\mathbb{A}+\mathbb{A}\mathbb{A}^{*}\right) x \Vert_{f} \right)  \right) ^{1/2},
\end{align*}
it follows that
\(
f^{-1} \left(\dfrac{f^{2}(\delta)}{nf(1)}\right)  \leq \Vert \left(  \mathbb{A}^{*}\mathbb{A}+\mathbb{A}\mathbb{A}^{*}\right) x \Vert_{f}  .
\)
Hence,
\begin{equation}
\label{equ19}
\Delta_{(f,\delta )}(\mathbb{A})\subset \Delta_{(f,\delta^{*})}(\mathbb{A}^{*}\mathbb{A}+\mathbb{A}^{*}\mathbb{A})
\end{equation}
where $ \delta^{*}=f^{-1}\left( \dfrac{f^{2}(\delta)}{nf(1)}\right) . $
Therefore, from \eqref{equ18} and \eqref{equ19} it follows that
\[
\omega_{(f,\delta)}(\mathbb{A})\leq f^{-1}\left( \sqrt{\dfrac{nf(1)}{2}}\sqrt{f\left( \omega_{(f,\delta^{*})}(\mathbb{A}^{*}\mathbb{A}+\mathbb{A}^{*}\mathbb{A})\right) } \right) ,
\]
that is,
\(
f^{2}\left( \omega_{(f,\delta )}(\mathbb{A}  ) \right) \leq \sqrt{\dfrac{nf(1)}{2}}f\left(  \omega_{(f,\delta^{*})}\left( \mathbb{A}^{*}\mathbb{A}+\mathbb{A}\mathbb{A}^{*} \right) \right) .
\)
\end{proof}

If we take $ f(t)=t^{p}, \ 0\leq t<\infty, \ 1\leq p< \infty $ in Theorem \ref{thm7}, we obtain the following corollary.
\begin{corollary}
\label{cor9}
Let $\mathbb{A} \in \mathfrak{B}^{n}(\mathcal{H})$ be $n$-tuple of operators and $0\leq \delta \leq \Vert \mathbb{A}\Vert_f.$ Then,
\(
\omega_{(t^{2},\delta )}^{2p}(\mathbb{A}  ) \leq \dfrac{n}{2}\omega_{(t^{2},\delta^{*})}^{p}\left( \mathbb{A}^{*}\mathbb{A}+\mathbb{A}\mathbb{A}^{*} \right),
\)
where $ \delta^{*}=\dfrac{\delta^{2}}{n^{1/p}}  . $
\end{corollary}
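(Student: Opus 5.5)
The statement is Corollary \ref{cor9}, which I read with $f(t)=t^{p}$ throughout, so that $\omega_{(t^{2},\cdot)}$ stands for $\omega_{(t^{p},\cdot)}$. The plan is to specialize Theorem \ref{thm7}, but to redo its pointwise estimate for power functions so that the constant comes out as $n/2$.

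\textbf{Checking the hypotheses.} The function $f(t)=t^{p}$ is continuous, strictly increasing and surjective. It is geometrically convex with equality, since $f(\sqrt{xy})=\sqrt{f(x)f(y)}$, and $f(1)=1$. Hence $\delta^{*}=f^{-1}\bigl(f^{2}(\delta)/(nf(1))\bigr)=(\delta^{2p}/n)^{1/p}=\delta^{2}/n^{1/p}$, exactly as in the statement. The domain inclusion \eqref{equ19}, $\Delta_{(t^{p},\delta)}(\mathbb{A})\subset\Delta_{(t^{p},\delta^{*})}(\mathbb{A}^{*}\mathbb{A}+\mathbb{A}\mathbb{A}^{*})$, then follows verbatim from the second half of the proof of Theorem \ref{thm7}. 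I take it as given.

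\textbf{Why not apply Theorem \ref{thm7} as stated.} Plugging $f(t)=t^{p}$ into Theorem \ref{thm7} only yields the factor $\sqrt{n/2}$. This is at most $n/2$ when $n\ge 2$, but for $n=1$ we have $\sqrt{1/2}>1/2$, so it does not give the corollary in all cases.

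\textbf{Pointwise estimate for power functions.} Fix $x\in S_{1}(\mathcal{H})$ and put $T_{m}=A_{m}^{*}A_{m}+A_{m}A_{m}^{*}$.
1. As in Theorem \ref{thm7}, $|\langle A_{m}x,x\rangle|^{2}\le \|Re A_{m}x\|^{2}+\|Im A_{m}x\|^{2}=\tfrac12\langle T_{m}x,x\rangle$.
2. Set $a_{m}=\langle T_{m}x,x\rangle\ge 0$. Step 1 gives $\sum_{m}|\langle A_{m}x,x\rangle|^{p}\le 2^{-p/2}\sum_{m}a_{m}^{p/2}$.
3. By Cauchy–Schwarz, $\sum_{m}a_{m}^{p/2}\le \sqrt{n}\bigl(\sum_{m}a_{m}^{p}\bigr)^{1/2}$.
4. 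Squaring, $\bigl(\sum_{m}|\langle A_{m}x,x\rangle|^{p}\bigr)^{2}\le n\,2^{-p}\sum_{m}a_{m}^{p}\le \tfrac{n}{2}\,|\langle(\mathbb{A}^{*}\mathbb{A}+\mathbb{A}\mathbb{A}^{*})x,x\rangle|_{t^{p}}^{p}$, where the last step uses $2^{-p}\le 1/2$ for $p\ge1$.
5. The left-hand side of step 4 equals $|\langle\mathbb{A}x,x\rangle|_{t^{p}}^{2p}$.

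\textbf{Conclusion.} Take the supremum over $x\in\Delta_{(t^{p},\delta)}(\mathbb{A})$. By the inclusion \eqref{equ19}, the right-hand side of step 4 is bounded by $\tfrac n2\,\omega_{(t^{p},\delta^{*})}^{p}(\mathbb{A}^{*}\mathbb{A}+\mathbb{A}\mathbb{A}^{*})$, which gives the claimed inequality.

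The main obstacle is the constant. A literal specialization of Theorem \ref{thm7} gives $\sqrt{n/2}$, so the refined pointwise estimate above is needed rather than a direct citation. The gain comes from using $|\langle A_m x,x\rangle|^{2}\le\frac12\langle T_m x,x\rangle$ before raising to the power $p/2$, which produces $2^{-p}\le 1/2$. Everything else, in particular the domain inclusion, transfers unchanged from Theorem \ref{thm7}.
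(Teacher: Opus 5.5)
Your proof is correct and is essentially the paper's own argument. The pointwise chain (real/imaginary decomposition with $Re^{2}A_m+Im^{2}A_m=\frac12(A_m^*A_m+A_mA_m^*)$, Cauchy--Schwarz over $m$, and $2^{-p}\le\frac12$), together with the inclusion \eqref{equ19}, is exactly the proof of Theorem \ref{thm7} specialized to $f(t)=t^{p}$, and $t^{p}$ is indeed the intended reading of the subscripts.

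The one thing to correct is your diagnosis of the constant. The $\sqrt{n/2}$ in the statement of Theorem \ref{thm7} is not a weakness of that chain. It comes from a dropped square in the last ``that is'' step of its proof: \eqref{equ18} and \eqref{equ19} actually give $f^{2}(\omega_{(f,\delta)}(\mathbb{A}))\le \frac{nf(1)}{2}\,f(\omega_{(f,\delta^{*})}(\mathbb{A}^{*}\mathbb{A}+\mathbb{A}\mathbb{A}^{*}))$. So your estimate is the paper's estimate, not a refinement of it. Moreover, Theorem \ref{thm7} as literally stated is false for $n\ge 3$ (take $f(t)=t$ and $A_1=A_2=A_3=I$). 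You were therefore right not to rely on it, even for the case $n\ge 3$ where you suggested it would suffice.
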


If we take $ n=1 $ and $ \delta=0 $ in Corollary \ref{cor9}, then we obtain the following corollary, which states the second inequality in \eqref{equ2}.
\begin{corollary}
\label{cor10}
Let $A_{1} \in \mathfrak{B}(\mathcal{H})$. Then
\(
\omega^{2}(A_{1})\leq \dfrac{1}{2} \Vert A_{1}^{*}A_{1}+A_{1}A_{1}^{*} \Vert .
\)
\end{corollary}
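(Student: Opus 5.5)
This is a specialization of Theorem \ref{thm7}. For Corollary \ref{cor10} I take $n=1$, $\delta=0$ and $f(t)=t^{2}$ (as the displayed inequality with $\omega_{(t^{2},\cdot)}$ suggests), so the relevant statement is Corollary \ref{cor9} with $n=1$.

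Since the corollary does not need the full generality, I would argue directly with the classical operators. For $n=1$ and $\delta=0$ we have $\Delta_{(f,0)}(A_1)=S_1(\mathcal{H})$, and $\vert\langle \mathbb{A}x,x\rangle\vert_f=\vert\langle A_1x,x\rangle\vert$ for every admissible $f$. Hence $\omega_{(f,0)}(A_1)=\omega(A_1)$ and $\omega_{(f,0)}(A_1^*A_1+A_1A_1^*)=\omega(A_1^*A_1+A_1A_1^*)$.

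Fix $x\in S_1(\mathcal{H})$ and write $A_1=\operatorname{Re}A_1+i\operatorname{Im}A_1$ with both parts selfadjoint. Then $\langle \operatorname{Re}A_1x,x\rangle$ and $\langle \operatorname{Im}A_1x,x\rangle$ are real, so
\[
\vert\langle A_1x,x\rangle\vert^2=\langle \operatorname{Re}A_1x,x\rangle^2+\langle \operatorname{Im}A_1x,x\rangle^2 .
\]
By Cauchy--Schwarz, $\langle Tx,x\rangle^2\le \Vert Tx\Vert^2=\langle T^2x,x\rangle$ for selfadjoint $T$ and unit $x$. Therefore
\[
\vert\langle A_1x,x\rangle\vert^2\le \langle(\operatorname{Re}^2A_1+\operatorname{Im}^2A_1)x,x\rangle=\tfrac12\langle (A_1^*A_1+A_1A_1^*)x,x\rangle .
\]
The last identity is a direct expansion: the cross terms $\pm\frac{i}{4}(\ldots)$ cancel, leaving $\operatorname{Re}^2A_1+\operatorname{Im}^2A_1=\frac12(A_1^*A_1+A_1A_1^*)$. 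These are exactly the first steps of the proof of Theorem \ref{thm7}.

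Taking the supremum over $x$ gives $\omega^2(A_1)\le\frac12\omega(A_1^*A_1+A_1A_1^*)$. Since $A_1^*A_1+A_1A_1^*\ge 0$ is selfadjoint, hence normal, its numerical radius equals its norm, which yields the claim.

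The main obstacle is only bookkeeping: matching Theorem \ref{thm7}, whose constant $\sqrt{nf(1)/2}$ with $f(t)=t^2$ and $n=1$ becomes $1/\sqrt2$, to the classical form. Working directly from the inequality chain avoids that. If one insists on deriving it from Corollary \ref{cor9} with $p=1$ and $n=1$, the $\delta^{*}=0$ case gives $\omega^2\le\frac12\omega(\cdot)$ verbatim, followed by the same final normality step.
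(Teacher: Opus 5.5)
Your argument is correct and is essentially the paper's. The paper obtains Corollary~\ref{cor10} by putting $n=1$, $\delta=0$ in Corollary~\ref{cor9} (hence Theorem~\ref{thm7}), and your chain $\vert\langle A_1x,x\rangle\vert^2\le\langle(Re^{2}A_{1}+Im^{2}A_{1})x,x\rangle=\frac12\langle(A_1^*A_1+A_1A_1^*)x,x\rangle$, followed by $\omega(A_1^*A_1+A_1A_1^*)=\Vert A_1^*A_1+A_1A_1^*\Vert$, is exactly the $n=1$ case of the first estimate in the proof of Theorem~\ref{thm7}.

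Inlining it is the safer choice. The constant $\sqrt{nf(1)/2}$ printed in Theorem~\ref{thm7} should be $nf(1)/2$ (apply $f$ to the last estimate of its proof and square), so among the paper's general statements only Corollary~\ref{cor9} with $p=1$ (i.e.\ $f(t)=t$, not $t^{2}$ as you first wrote) yields the factor $\frac12$. Also, the cancelling cross terms are $\pm\frac14(A_1^{2}+A_1^{*2})$, with no factor $i$.
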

\begin{theorem}
\label{thm8}
Let $ \mathbb{A}\in\mathfrak{B}^{n}(\mathcal{H}) $ be $n$-tuple of operators. Then
\begin{align*}
\omega_{(f,\delta)}(Re\mathbb{A}) & \leq \omega _{(f,0)}(\mathbb{A})  \ \text{for} \ \delta\leq \Vert Re\mathbb{A} \Vert_{f}\ ,
\\
\omega_{(f,\delta)}(Im\mathbb{A}) & \leq \omega _{(f,0)}(\mathbb{A})  \ \text{for} \  \delta\leq \Vert Im\mathbb{A} \Vert_{f}\ .
\end{align*}
\end{theorem}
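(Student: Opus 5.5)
The plan is a pointwise comparison followed by enlarging the set over which the supremum is taken. The key identity is that for every $x\in\mathcal{H}$ and $m=1,\ldots,n$,
\[
\langle Re A_m x,x\rangle=\tfrac12\bigl(\langle A_mx,x\rangle+\overline{\langle A_mx,x\rangle}\bigr)=Re\langle A_mx,x\rangle ,
\]
and similarly $\langle Im A_m x,x\rangle=Im\langle A_mx,x\rangle$. Hence
\[
|\langle Re A_mx,x\rangle|\le|\langle A_mx,x\rangle| \quad\text{and}\quad |\langle Im A_mx,x\rangle|\le|\langle A_mx,x\rangle| .
\]

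Next I use the standing assumption that $f$ is continuous, strictly increasing and surjective, so that $f^{-1}$ is increasing as well. Applying $f$ termwise, summing over $m$, and then applying $f^{-1}$ gives, for every $x\in S_1(\mathcal{H})$,
\[
|\langle Re\mathbb{A}x,x\rangle|_f=f^{-1}\Bigl(\sum_{m=1}^n f(|\langle Re A_mx,x\rangle|)\Bigr)\le f^{-1}\Bigl(\sum_{m=1}^n f(|\langle A_mx,x\rangle|)\Bigr)=|\langle\mathbb{A}x,x\rangle|_f .
\]
The same inequality holds with $Im$ in place of $Re$. No convexity or geometric convexity of $f$ is needed here.

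Finally I pass to suprema. For $\delta\le\Vert Re\mathbb{A}\Vert_f$ we have $\Delta_{(f,\delta)}(Re\mathbb{A})\subset S_1(\mathcal{H})=\Delta_{(f,0)}(\mathbb{A})$; this also covers $\delta\le 0$, where the set is all of $S_1(\mathcal{H})$, consistent with the convention noted after Definition \ref{def6}. Taking the supremum of the pointwise bound over $\Delta_{(f,\delta)}(Re\mathbb{A})$ gives
\[
\omega_{(f,\delta)}(Re\mathbb{A})\le\sup_{x\in S_1(\mathcal{H})}|\langle\mathbb{A}x,x\rangle|_f=\omega_{(f,0)}(\mathbb{A}),
\]
and the imaginary-part statement follows in the same way.

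There is no real obstacle. The only point needing care is the degenerate case $\delta=\Vert Re\mathbb{A}\Vert_f$ when this supremum is not attained, so that $\Delta_{(f,\delta)}(Re\mathbb{A})$ could be empty. There the supremum over the empty set is taken to be $0$, and the inequality holds trivially.
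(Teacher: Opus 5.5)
Your proposal is correct and takes the same route as the paper. You push the pointwise bound $|\langle Re A_m x,x\rangle| = |Re\langle A_m x,x\rangle| \le |\langle A_m x,x\rangle|$ through the increasing maps $f$ and $f^{-1}$, then use the inclusion $\Delta_{(f,\delta)}(Re\mathbb{A})\subset S_1(\mathcal{H})=\Delta_{(f,0)}(\mathbb{A})$ to compare suprema. You also justify $\langle Re A_m x,x\rangle = Re\langle A_m x,x\rangle$ and treat the possibly empty constraint set, two points the paper leaves implicit.
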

\begin{proof}
For any $ x\in \mathcal{H}, $ it is clear that
\begin{multline*}
\vert \langle (Re\mathbb{A})x,x\rangle \vert = f^{-1}\left( \sum\limits_{m=1}^{n}f\left( \vert \langle ReA_{m}x,x\rangle \vert \right)   \right)\\
\leq f^{-1}\left( \sum\limits_{m=1}^{n}f\left( \vert \langle A_{m}x,x\rangle \vert \right)  \right)
= \vert \langle \mathbb{A}x,x \rangle \vert_{f}
\end{multline*}
and similarly
\(
\vert \langle Im\mathbb{A}x,x\rangle \vert_{f} \leq \vert \langle \mathbb{A}x,x \rangle \vert_{f} .
\)
It is easily to see that
\begin{align*}
& \Delta_{(f,\delta)}(Re\mathbb{A})\subset S_{1}(H) \ \text{for any} \ \delta \leq \Vert Re\mathbb{A} \Vert_{f},
\\
& \Delta_{(f,\delta)}(Im\mathbb{A})\subset S_{1}(H) \ \text{for any} \ \delta \leq \Vert Im\mathbb{A} \Vert_{f}.
\end{align*}
Therefore, from these relations it follows that
\begin{align*}
&\omega_{(f,\delta)}(Re\mathbb{A})\leq \omega _{(f,0)}(\mathbb{A})  \ \text{for} \ \delta\leq \Vert Re\mathbb{A} \Vert_{f},
\\
&\omega_{(f,\delta)}(Im\mathbb{A})\leq \omega _{(f,0)}(\mathbb{A})  \ \text{for} \  \delta\leq \Vert Im\mathbb{A} \Vert_{f} . \tag*{\qedhere}
\end{align*}
\end{proof}
\begin{remark}
Theorem \ref{thm8} generalizes \textnormal{\cite[Thm. 2.9]{Ismailov2026}}.
\end{remark}
\section{Sectorial case of coordinate operators}
\label{Sec:3}
In this section, the joint $(f,\delta)$-numerical radius is investigated in the case when the coordinate operators of $n$-tuple of operators are sectoral.
\begin{theorem}
\label{thm9}
Let $\mathbb{A} = (A_{1}, \ldots, A_{n}) \in \mathfrak{B}^{n}(\mathcal{H})$ and $ 0\leq\delta\leq \Vert \mathbb{A} \Vert_{f}. $
\begin{enumerate}[label=\upshape{\arabic*.}, ref=\upshape{\arabic*}, labelindent=5pt, leftmargin=*,ref=\upshape{\arabic*.}]
\item \label{item1-thm9} If $ A_{m}\in Sec_{\alpha_{m}}(\mathcal{H}), \ 0\leq \alpha_{m}<\dfrac{\pi}{2}, \ m=1,\ldots ,n, $ then
\[
\omega_{(f,\delta)}(\mathbb{A})\leq \omega_{(f,\delta_{r})}(\sec(\alpha_{max})Re\mathbb{A}),
\]
where $ \alpha_{max}=\max\limits_{1\leq m \leq n}\alpha_{m} $ and $ \delta_{r}=\delta - \Vert Im \mathbb{A} \Vert_{f} .$
\item \label{item2-thm9} If $ A_{m}\in Sec_{\alpha_{m}}(\mathcal{H}), \ 0< \alpha_{m}<\dfrac{\pi}{2}, \ m=1,\ldots ,n, $ then
\[
\omega_{(f,\delta)}(\mathbb{A})\leq \omega_{(f,\delta_{i})}(\csc(\alpha_{min})Im\mathbb{A}),
\]
where $ \alpha_{min}=\min\limits_{1\leq m \leq n}\alpha_{m} $ and $ \delta_{i}=\delta - \Vert Re \mathbb{A} \Vert_{f} .$
\end{enumerate}
\end{theorem}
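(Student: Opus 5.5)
The proof will follow the pattern used throughout Section \ref{Sec:2}. First I bound $|\langle \mathbb{A}x,x\rangle|_f$ pointwise by the corresponding quantity for the scaled real (or imaginary) part. Then I show that the constraint set $\Delta_{(f,\delta)}(\mathbb{A})$ sits inside the constraint set defining the right-hand side. Taking suprema then gives the claim.

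\emph{Pointwise estimate for part 1.} Fix $x\in S_1(\mathcal{H})$ and $m$. Since $W(A_m)\subset S(\alpha_m)$, write $\langle A_m x,x\rangle = re^{i\varphi}$ with $|\varphi|<\alpha_m\le\alpha_{max}<\pi/2$. Then
\[
\langle Re A_m x,x\rangle = Re\langle A_m x,x\rangle = r\cos\varphi \ge r\cos\alpha_{max},
\]
which gives $|\langle A_m x,x\rangle|\le \sec(\alpha_{max})\,|\langle Re A_m x,x\rangle| = |\langle \sec(\alpha_{max})Re A_m x,x\rangle|$. Because $f$ is increasing, summing over $m$ and applying $f^{-1}$ yields
\[
|\langle \mathbb{A}x,x\rangle|_f \le |\langle \sec(\alpha_{max})Re\mathbb{A}\,x,x\rangle|_f .
\]
For part 2 the same argument uses $|Im\langle A_m x,x\rangle| = r|\sin\varphi|$. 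Here one needs $r\le \csc(\alpha_{min})\,r|\sin\varphi|$, i.e.\ $|\sin\varphi|\ge\sin\alpha_{min}$.

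\emph{Inclusion of constraint sets.} Write $\mathbb{A}=Re\mathbb{A}+i\,Im\mathbb{A}$. The subadditivity argument from the proof of Theorem \ref{thm1}(\ref{item2-thm-fdnumradntupop}) (Mulholland's inequality) gives
\[
\Vert\mathbb{A}x\Vert_f\le \Vert Re\mathbb{A}\,x\Vert_f+\Vert Im\mathbb{A}\Vert_f .
\]
Hence $x\in\Delta_{(f,\delta)}(\mathbb{A})$ implies $\Vert Re\mathbb{A}\,x\Vert_f\ge \delta_r$. To get the scaled operator, note that $\sec(\alpha_{max})\ge1$ and $f$ is increasing, so $\Vert \sec(\alpha_{max})Re\mathbb{A}\,x\Vert_f\ge\Vert Re\mathbb{A}\,x\Vert_f\ge\delta_r$. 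Therefore $\Delta_{(f,\delta)}(\mathbb{A})\subset\Delta_{(f,\delta_r)}(\sec(\alpha_{max})Re\mathbb{A})$. When $\delta_r\le 0$ the right-hand set is all of $S_1(\mathcal{H})$, consistent with the convention stated earlier. The imaginary case is identical, using $\csc(\alpha_{min})\ge1$. Combining the pointwise estimate with the inclusion and taking suprema proves both parts.

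\emph{Main obstacle.} The delicate point is the pointwise bound in part 2. Sectoriality bounds $|\varphi|$ from above, not from below, so $|\sin\varphi|\ge\sin\alpha_{min}$ does not follow from the hypotheses as stated. For example, the real axis lies in the sector, and there $Im\langle A_mx,x\rangle=0$. I would first check whether the intended reading is $\alpha_{min}\le|\varphi|$ on the numerical ranges, e.g.\ $W(A_m)$ lying on or near the boundary rays. Otherwise the argument only goes through under the additional assumption that $|\arg\langle A_m x,x\rangle|\ge\alpha_{min}$ for all $x\in\Delta_{(f,\delta)}(\mathbb{A})$, and I would state that assumption explicitly at that step. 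Part 1 has no such issue.

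A secondary check is the Mulholland step. It needs $f$ to satisfy the hypotheses of \cite{Mulholland1950}, as in Theorem \ref{thm1}, and I would carry that assumption over, since the rest uses only monotonicity of $f$.
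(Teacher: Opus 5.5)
Your proof of part 1 is essentially the paper's. It uses the same pointwise sector estimate; the paper writes it as $|\langle A_mx,x\rangle|\le(1+\tan^2\alpha_m)^{1/2}|\langle Re A_mx,x\rangle|$. It also uses the same Mulholland-based inclusion $\Delta_{(f,\delta)}(\mathbb{A})\subset\Delta_{(f,\delta_r)}(\sec(\alpha_{max})Re\mathbb{A})$. Your last step, which uses only $\sec(\alpha_{max})\ge 1$ and monotonicity of $f$, is cleaner than the paper's detour through $f(\cos(\alpha_{max})t)$ versus $\cos(\alpha_{max})f(t)$.

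You are right that the triangle inequality for $\Vert\cdot\Vert_f$ is an unstated hypothesis. However, geometric convexity alone, as in Theorem~\ref{thm1}, does not give it. For instance, $f(t)=\sqrt{t}$ is geometrically convex, yet $f^{-1}\bigl(f(1)+f(1)\bigr)=4>2=f^{-1}\bigl(f(1)+f(0)\bigr)+f^{-1}\bigl(f(0)+f(1)\bigr)$. Mulholland's theorem also requires $f$ convex, so that is the hypothesis to carry over.

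Your objection to part 2 is correct, and it settles the matter rather than being something to check: part 2 is false as stated. Take $n=1$, $A_1=I$, $f(t)=t$ and $\delta=0$. Then $W(I)=\{1\}\subset S(\alpha_1)$ for every $\alpha_1\in(0,\pi/2)$, and $Im\,I=0$. So $\csc(\alpha_{min})Im\mathbb{A}=0$ and the right-hand side is $\omega_{(f,-1)}(0)=0$, while the left-hand side is $\omega(I)=1$.

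The paper's proof relies on the claim $|\langle Re A_mx,x\rangle|\le\cot(\alpha_m)|\langle Im A_mx,x\rangle|$. This is the reverse of what sectoriality gives, namely $|\langle Im A_mx,x\rangle|\le\tan(\alpha_m)\langle Re A_mx,x\rangle$. In fact, for $A_m\in Sec_{\alpha_m}(\mathcal{H})$ the claimed inequality fails at every $x$ with $\langle A_mx,x\rangle\neq 0$.

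Your suggested reading $\alpha_{min}\le|\varphi|$ does not repair this either. Combined with $|\varphi|<\alpha_m$, it leaves no nonzero values of $\langle A_mx,x\rangle$ for an index with $\alpha_m=\alpha_{min}$.

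What is needed is an independent lower angle, such as the hypothesis of Theorem~\ref{thm10}: $W(A_m)\subset\{re^{i\varphi}: r\ge 0,\ \gamma_m\le\varphi\le\alpha_m\}$ with $\gamma_m>0$. Then $|\langle A_mx,x\rangle|\le\csc(\gamma_{min})|\langle Im A_mx,x\rangle|$. Your inclusion argument, now using $\csc(\gamma_{min})\ge 1$, goes through unchanged and gives $\omega_{(f,\delta)}(\mathbb{A})\le\omega_{(f,\delta_i)}(\csc(\gamma_{min})Im\mathbb{A})$.
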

\begin{proof}
 \noindent \ref{item1-thm9}\ For any $ x\in \mathcal{H},
 $ it is clear that
\begin{align*}
f\left(  \vert \langle \mathbb{A}x,x \rangle \vert_{f}\right) &= \sum\limits_{m=1}^{n} f\left(  \vert \langle A_{m}x,x \rangle \vert\right) \\
&= \sum\limits_{m=1}^{n} f\left( \left( \vert \langle ReA_{m}x,x\rangle \vert^{2} + \vert \langle ImA_{m}x,x\rangle \vert^{2}\right) ^{1/2} \right) \\
&\leq \sum\limits_{m=1}^{n} f \left( \left( 1+\tan^{2}(\alpha_{m}) \right)^{1/2} \vert \langle ReA_{m}x,x\rangle \vert \right) \\
&\leq \sum\limits_{m=1}^{n} f \left( \sec(\alpha_{max})\vert \langle ReA_{m}x,x\rangle \vert \right) \\
&= \sum\limits_{m=1}^{n} f \left( \vert \langle \sec(\alpha_{max})ReA_{m}x,x\rangle \vert \right) \\
&= f \left( \vert \langle \sec(\alpha_{max})Re\mathbb{A}x,x\rangle \vert_{f}\right).
\end{align*}
Consequently, for $ x\in \mathcal{H}$
\begin{equation}
\label{equ20}
\vert \langle \mathbb{A} x,x \rangle \vert_{f}\leq \vert \langle \sec(\alpha_{max})Re\mathbb{A}x,x\rangle \vert_{f}.
\end{equation}
On the other hand, for $ 0\leq\delta\leq \Vert  \mathbb{A} \Vert_{f} $ and any $ x\in \Delta_{(f,\delta)}( \mathbb{A}) $ we obtain
\begin{align*}
\delta &\leq \Vert \mathbb{A} x \Vert_{f} = f^{-1}\left( \sum\limits_{m=1}^{n}f\left( \Vert A_{m}x \Vert \right)  \right) \\
&\leq f^{-1} \left( \sum\limits_{m=1}^{n}f\left( \Vert ReA_{m}x \Vert + \Vert Im A_{m}x \Vert \right)  \right) \\
&\leq f^{-1} \left( \sum\limits_{m=1}^{n}f\left( \Vert ReA_{m}x \Vert \right)  \right)+f^{-1} \left( \sum\limits_{m=1}^{n}f\left( \Vert Im A_{m}x \Vert \right)  \right) \\
&= f^{-1} \left( \sum\limits_{m=1}^{n}f\left( \cos(\alpha_{max})\Vert \sec(\alpha_{max})ReA_{m}x \Vert \right)  \right)+ \Vert Im \mathbb{A} \Vert_{f}\\
&= f^{-1}\left( \cos(\alpha_{max})\sum\limits_{m=1}^{n}f\left( \Vert \sec(\alpha_{max})ReA_{m}x \Vert \right)  \right) + \Vert Im \mathbb{A} \Vert_{f} \\
&\leq f^{-1}\left(\sum\limits_{m=1}^{n}f\left( \Vert \sec(\alpha_{max})ReA_{m}x \Vert \right)  \right) + \Vert Im \mathbb{A} \Vert_{f}.
\end{align*}
Then, it is clear that
\(
\delta - \Vert Im \mathbb{A} \Vert_{f} \leq  \Vert \sec(\alpha_{max})Re\mathbb{A}x \Vert_{f},
\)
and setting $\delta_{r}=\delta - \Vert Im \mathbb{A} \Vert_{f},   $
yields
\(
\delta_{r} \leq  \Vert \sec(\alpha_{max})Re\mathbb{A}x \Vert_{f}.
\)
Therefore,
\begin{equation}
\label{equ21}
\Delta_{(f,\delta)}(\mathbb{A})\subset\Delta_{(f,\delta_{r})}\left( \sec(\alpha_{max})Re\mathbb{A} \right) .
\end{equation}
Consequently, from the \eqref{equ20} and \eqref{equ21}, it follows that
\[
\omega_{(f,\delta)}(\mathbb{A})\leq \omega_{(f,\delta_{r})}(\sec(\alpha_{max})Re\mathbb{A}).
\]
\noindent \ref{item2-thm9} For any $ 1\leq m\leq n $ and $ x\in \mathcal{H} $,
\[
\vert \langle Re A_{m}x,x\rangle \vert \leq \cot(\alpha_{m}) \vert \langle Im A_{m}x,x\rangle \vert .
\]
Therefore, for $ x\in \mathcal{H} $ we obtain
\begin{align*}
f\left(  \vert \langle \mathbb{A}x,x \rangle \vert_{f}\right) &= \sum\limits_{m=1}^{n} f\left(  \vert \langle A_{m}x,x \rangle \vert\right) \\
&= \sum\limits_{m=1}^{n} f\left( \left( \vert \langle ReA_{m}x,x\rangle \vert^{2} + \vert \langle ImA_{m}x,x\rangle \vert^{2}\right) ^{1/2} \right) \\
&\leq \sum\limits_{m=1}^{n} f \left( \left( 1+\cot^{2}(\alpha_{m}) \right)^{1/2} \vert \langle ImA_{m}x,x\rangle \vert )^{1/2}\right) \\
&\leq \sum\limits_{m=1}^{n} f \left( \csc(\alpha_{min})\vert \langle ImA_{m}x,x\rangle \vert \right) \\
&= \sum\limits_{m=1}^{n} f \left( \vert \langle \csc(\alpha_{min})ImA_{m}x,x\rangle \vert \right) \\
&= f \left( \vert \langle \csc(\alpha_{min})Im\mathbb{A}x,x\rangle \Vert_{f}\right) .
\end{align*}
Consequently, for $ x\in \mathcal{H} $
\begin{equation}
\label{equ22}
\vert \langle \mathbb{A} x,x \rangle \vert_{f}\leq \vert \langle \csc(\alpha_{min})Im\mathbb{A}x,x\rangle \Vert_{f} .
\end{equation}
On the other hand, for $ 0\leq\delta\leq \Vert  \mathbb{A} \Vert_{f} $ and any $ x\in \Delta_{(f,\delta)}( \mathbb{A}) $ we obtain
\begin{align*}
\delta &\leq \Vert \mathbb{A} x \Vert_{f} = f^{-1}\left( \sum\limits_{m=1}^{n}f\left( \Vert A_{m}x \Vert \right)  \right) \\
&\leq f^{-1} \left( \sum\limits_{m=1}^{n}f\left( \Vert ReA_{m}x \Vert + \Vert Im A_{m}x \Vert \right)  \right) \\
&\leq f^{-1} \left( \sum\limits_{m=1}^{n}f\left( \Vert ReA_{m}x \Vert \right)  \right)+f^{-1} \left( \sum\limits_{m=1}^{n}f\left( \Vert Im A_{m}x \Vert \right)  \right) \\
&\leq f^{-1}\left( \sum\limits_{m=1}^{n}f\left(\sin(\alpha_{min})\langle \csc(\alpha_{min})Im A_{m}x,x\rangle \right)  \right)\\
&\hspace{5cm} +f^{-1} \left( \sum\limits_{m=1}^{n}f\left( \Vert ReA_{m}x \Vert \right)  \right) \\
&\leq f^{-1} \left( \sum\limits_{m=1}^{n}\sin(\alpha_{min})f\left( \Vert \csc(\alpha_{min})ImA_{m}x \Vert \right)  \right)+ \Vert Re \mathbb{A} \Vert_{f}\\
&\leq f^{-1}\left( \sin(\alpha_{min})\sum\limits_{m=1}^{n}f\left( \Vert \csc(\alpha_{min})ImA_{m}x \Vert \right)  \right) + \Vert Re \mathbb{A} \Vert_{f}\\
&\leq f^{-1}\left(\sum\limits_{m=1}^{n}f\left( \Vert \csc(\alpha_{min})ImA_{m}x \Vert \right)  \right) + \Vert Re \mathbb{A} \Vert_{f}\\
\end{align*}
Then, it is clear that
\(
\delta - \Vert Re \mathbb{A} \Vert_{f} \leq  \Vert \csc(\alpha_{min})Im\mathbb{A}x \Vert_{f},
\)
and setting $\delta_{i}=\delta - \Vert Re \mathbb{A} \Vert_{f},   $
we obtain that
\(
\delta_{i} \leq  \Vert \csc(\alpha_{min})Im\mathbb{A}x \Vert_{f}.
\)
Therefore,
\begin{equation}
\label{equ23}
\Delta_{(f,\delta)}(\mathbb{A})\subset\Delta_{(f,\delta_{i})}\left( \csc(\alpha_{min})Im\mathbb{A} \right) .
\end{equation}
Consequently, \eqref{equ22} and \eqref{equ23} yield
\(
\omega_{(f,\delta)}(\mathbb{A})\leq \omega_{(f,\delta_{i})}(\csc(\alpha_{min})Im\mathbb{A}).
\)
 \end{proof}

If we take $ f(t)=t^{2}, \ 0\leq t < \infty  $  and $ \delta=0 $ in Theorem \ref{thm9}, we obtain the following corollary, in which the statement \label{item1-cor7} was previously proved in \cite{Bedrani2020}.
\begin{corollary}
\label{cor11}
Let  $ \mathbb{A}=(A_{1},\ldots , A_{n})\in\mathfrak{B}^{n}(\mathcal{H}). $
\begin{enumerate}[label=\upshape{\arabic*.}, ref=\upshape{\arabic*}, labelindent=5pt, leftmargin=*,ref=\upshape{\arabic*}]
\item \label{item1-cor11} If $ A_{m}\in Sec_{\alpha_{m}}(\mathcal{H}), \ 0\leq \alpha_{m}< \frac{\pi}{2}, \ m=1, \ldots ,n,  $ then
\[
\omega (\mathbb{A})\leq \sec(\alpha_{max})\omega (Re \mathbb{A}),
\]
where $ \alpha_{max}=\max\limits_{1\leq m \leq n}\alpha_{m}. $
Additionally, if $ n=1, $ then
\[
\omega (A_{1})\leq \sec(\alpha_{1})\Vert ReA_{1} \Vert .
\]
\item \label{item2-cor11} If $ A_{m}\in Sec_{\alpha_{m}}(\mathcal{H}), \ 0 < \alpha_{m}< \frac{\pi}{2}, \ m=1, \ldots ,n, $ then
\[
\omega (\mathbb{A})\leq \csc(\alpha_{min})\omega (Im \mathbb{A}),
\]
where $ \alpha_{min}=\min\limits_{1\leq m \leq n}\alpha_{m}. $
Additionally, if $ n=1, $ then
\[
\omega (A_{1})\leq \sec(\alpha_{1})\Vert ImA_{1} \Vert .
\]
\end{enumerate}
\end{corollary}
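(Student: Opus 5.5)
To prove Corollary~\ref{cor11}, I would specialize Theorem~\ref{thm9} to $f(t)=t^{2}$ and $\delta=0$, and then pass to the single-operator case $n=1$ using the standard relation between numerical radius and norm for self-adjoint operators. With $f(t)=t^{2}$ we have $\vert\langle \mathbb{A}x,x\rangle\vert_{f}=\bigl(\sum_{m}\vert\langle A_{m}x,x\rangle\vert^{2}\bigr)^{1/2}$, so $\omega_{(t^{2},0)}(\mathbb{A})=\omega(\mathbb{A})$ is the joint numerical radius of Definition~\ref{def1}.

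For part~\ref{item1-cor11}, put $\delta=0$ in part~\ref{item1-thm9} of Theorem~\ref{thm9}. Then $\delta_{r}=-\Vert Im\mathbb{A}\Vert_{f}\le 0$, and as noted after Definition~\ref{def6}, a nonpositive parameter gives the same radius as parameter $0$. Hence
\[
\omega(\mathbb{A})\le \omega_{(t^{2},0)}(\sec(\alpha_{max})Re\mathbb{A}).
\]
Rather than invoking the multiplicativity clause of Theorem~\ref{thm1}, I would pull out the positive scalar $\sec(\alpha_{max})$ directly: $\vert\langle cB x,x\rangle\vert = c\,\vert\langle Bx,x\rangle\vert$ for $c>0$, and the Euclidean sum is homogeneous. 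This gives $\omega(\mathbb{A})\le\sec(\alpha_{max})\,\omega(Re\mathbb{A})$.

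For $n=1$ we have $\alpha_{max}=\alpha_{1}$, and $ReA_{1}$ is self-adjoint, so $\omega(ReA_{1})=\Vert ReA_{1}\Vert$ by the equality case of \eqref{equ1} for normal operators. This yields $\omega(A_{1})\le\sec(\alpha_{1})\Vert ReA_{1}\Vert$.

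Part~\ref{item2-cor11} follows the same way from part~\ref{item2-thm9}. Here $\delta_{i}=-\Vert Re\mathbb{A}\Vert_{f}\le 0$ reduces to parameter $0$, and factoring out $\csc(\alpha_{min})$ gives $\omega(\mathbb{A})\le\csc(\alpha_{min})\,\omega(Im\mathbb{A})$. For $n=1$, self-adjointness of $ImA_{1}$ gives $\omega(A_{1})\le\csc(\alpha_{1})\Vert ImA_{1}\Vert$.

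The printed statement has $\sec(\alpha_{1})$ in this last inequality; I believe that is a typo. The direct derivation gives $\csc(\alpha_{1})$, and for $\alpha_{1}<\pi/4$ the printed version can fail. I would state and prove the $\csc$ version and point out the discrepancy.

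The only delicate points are these two: the reduction of negative $\delta$-parameters to $\delta=0$, and the scalar factoring, which must not depend on the multiplicativity hypothesis of Theorem~\ref{thm1}.
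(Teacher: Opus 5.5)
Your part 1 is fine and follows the paper's route: specialize Theorem~\ref{thm9}(1) to $f(t)=t^{2}$, $\delta=0$. At $\delta=0$ every $\Delta$-set is the whole unit sphere. So the argument rests only on the pointwise bound $|\langle A_m x,x\rangle|\le\sec(\alpha_m)\langle \mathrm{Re}A_m x,x\rangle$, which does follow from $|\arg\langle A_m x,x\rangle|<\alpha_m$. The case $n=1$ then follows from $\omega(\mathrm{Re}A_1)=\|\mathrm{Re}A_1\|$.

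Part 2 cannot be proved, because it is false, both as printed and in your $\csc$ version.

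\textbf{Counterexample.} Take $n=1$ and $A_1=I$.
\begin{itemize}
\item $W(I)=\{1\}\subset S(\alpha)$ for every $\alpha\in(0,\pi/2)$, so $I$ satisfies the hypothesis.
\item $\mathrm{Im}\,I=0$, so $\csc(\alpha_1)\,\omega(\mathrm{Im}A_1)$, $\csc(\alpha_1)\|\mathrm{Im}A_1\|$ and $\sec(\alpha_1)\|\mathrm{Im}A_1\|$ all vanish.
\item But $\omega(I)=1$.
\end{itemize}
The tuple $(I,\dots,I)$ does the same for every $n$, with $\omega(\mathbb{A})=\sqrt{n}$. So your remark that the printed version ``can fail for $\alpha_1<\pi/4$'' understates it: it fails for every $\alpha_1$, and so does your replacement.

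\textbf{Where the error comes from.} It is inherited from Theorem~\ref{thm9}(2), which you used without checking. Its first step, $|\langle \mathrm{Re}A_m x,x\rangle|\le\cot(\alpha_m)|\langle \mathrm{Im}A_m x,x\rangle|$, goes the wrong way. Write $\langle A_m x,x\rangle=re^{i\varphi}$ with $|\varphi|<\alpha_m$. Then $r\cos\varphi\ge\cot(\alpha_m)\,r|\sin\varphi|$. So a sector condition bounds the imaginary part by the real part, never the reverse.

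The two points you flagged as delicate (negative $\delta$-parameters and factoring out the scalar) are harmless. The real problem is this reversed inequality.

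\textbf{What a correct version needs.} A bound in terms of $\mathrm{Im}$ requires the argument to be bounded away from $0$. One example is the accretive-dissipative hypothesis of Theorem~\ref{thm10} with $0<\gamma_m\le\varphi\le\alpha_m$. That gives $|\langle A_m x,x\rangle|\le\csc(\gamma_m)\langle \mathrm{Im}A_m x,x\rangle$. Since $\csc$ is decreasing on $(0,\pi/2)$, it follows that $\omega(\mathbb{A})\le\csc(\gamma_{\min})\,\omega(\mathrm{Im}\,\mathbb{A})$, and for $n=1$ that $\omega(A_1)\le\csc(\gamma_1)\|\mathrm{Im}A_1\|$.

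Once you suspected the printed constant, you should have re-derived this pointwise inequality yourself rather than swapping $\sec$ for $\csc$. The ``direct derivation'' you mention does not exist.
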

\begin{remark}
In a special case, Corollary \ref{cor11} generalizes \textnormal{\cite[Corollary 3.3]{Ismailov2026}}.
\end{remark}
\begin{remark}
In a special case, Theorem \ref{thm9} generalizes \textnormal{\cite[Thm. 3.1]{Ismailov2026}}.
\end{remark}
\begin{theorem}
\label{thm10}
Let $ \mathbb{A}=(A_{1},\ldots , A_{n})\in\mathfrak{B}^{n}(\mathcal{H}) $ be $n$-tuple of operators and for each $ m=1,\ldots ,n $ the coordinate operator $ A_{m} $ is accretive-dissipative operator such that
\(
W(A_{m})\subset \left\lbrace  re^{i\varphi}: r\geq 0, \ 0\leq \gamma_{m}\leq \varphi \leq \alpha_{m}<\dfrac{\pi}{2}\right\rbrace .
\)
\begin{enumerate}[label=\upshape{\arabic*.}, ref=\upshape{\arabic*}, labelindent=5pt, leftmargin=*,ref=\upshape{\arabic*.}]
\item \label{item1-thm10} For any $ \delta\leq \Vert \sec(\gamma_{min})Re \mathbb{A} \Vert_{f}, $
\[
\omega_{(f,\delta)}(\sec(\gamma_{min})Re \mathbb{A})\leq \omega_{(f,0)}(\mathbb{A}).
\]
\item  \label{item2-thm10} For any $ \delta\leq \Vert \csc(\alpha_{max})Im \mathbb{A} \Vert_{f}, $
\[
\omega_{(f,\delta)}(\csc(\alpha_{max})Im \mathbb{A})\leq \omega_{(f,0)}(\mathbb{A}).
\]
\end{enumerate}
\end{theorem}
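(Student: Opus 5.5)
The argument follows the pattern of Theorem \ref{thm8}. We first prove a pointwise comparison of the numerators $\vert\langle\cdot\,x,x\rangle\vert_f$, and then drop the $\delta$-constraint by enlarging the admissible set to all of $S_1(\mathcal{H})$. For part \ref{item1-thm10}, fix $x\in S_1(\mathcal{H})$ and $m$. Since $ReA_m$ and $ImA_m$ are selfadjoint, $\langle ReA_m x,x\rangle=Re\langle A_m x,x\rangle$ and $\langle ImA_m x,x\rangle=Im\langle A_m x,x\rangle$. The sector hypothesis lets us write $\langle A_m x,x\rangle = r e^{i\varphi}$ with $r\ge 0$ and $\gamma_m\le\varphi\le\alpha_m<\frac{\pi}{2}$.

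Then
\[
\vert\langle ReA_m x,x\rangle\vert = r\cos\varphi \le r\cos\gamma_m \le r\cos\gamma_{min},
\]
because $\cos$ is decreasing on $[0,\frac{\pi}{2})$ and $\gamma_{min}\le\gamma_m$. Here $\gamma_{min}=\min_m\gamma_m$ and $\alpha_{max}=\max_m\alpha_m$, as in Theorem \ref{thm9}. Hence
\[
\vert\langle \sec(\gamma_{min})ReA_m x,x\rangle\vert \le \vert\langle A_m x,x\rangle\vert .
\]
For part \ref{item2-thm10}, the same representation gives $\vert\langle ImA_m x,x\rangle\vert = r\sin\varphi\le r\sin\alpha_m\le r\sin\alpha_{max}$, since $\sin$ is increasing there. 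Therefore
\[
\vert\langle \csc(\alpha_{max})ImA_m x,x\rangle\vert\le \vert\langle A_m x,x\rangle\vert .
\]
One caveat: if $\alpha_{max}=0$, then $\csc$ is undefined, so in part \ref{item2-thm10} we implicitly need $\alpha_{max}>0$. I would state this restriction explicitly.

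Next, apply the increasing function $f$ to each coordinate inequality, sum over $m$, and apply the increasing function $f^{-1}$. This gives, for every $x\in\mathcal{H}$,
\[
\vert\langle \sec(\gamma_{min})Re\mathbb{A}\,x,x\rangle\vert_f\le\vert\langle\mathbb{A}x,x\rangle\vert_f ,
\]
and the analogous inequality with $\csc(\alpha_{max})Im\mathbb{A}$.

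Finally, for any admissible $\delta$ we have $\Delta_{(f,\delta)}(\sec(\gamma_{min})Re\mathbb{A})\subset S_1(\mathcal{H})=\Delta_{(f,0)}(\mathbb{A})$. Taking the supremum of the left side over the smaller set and of the right side over $S_1(\mathcal{H})$ yields
\[
\omega_{(f,\delta)}(\sec(\gamma_{min})Re\mathbb{A})\le\omega_{(f,0)}(\mathbb{A}),
\]
and likewise for the imaginary part.

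There is no real obstacle. The only point requiring care is the direction of the monotonicity of $\cos$ and $\sin$, which is what dictates the choice of $\gamma_{min}$ in part \ref{item1-thm10} and $\alpha_{max}$ in part \ref{item2-thm10}, together with the degenerate case $\alpha_{max}=0$ noted above.
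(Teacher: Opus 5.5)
Your proof is correct and follows the paper's route: first the pointwise comparison $|\langle \sec(\gamma_{min})Re\,\mathbb{A}x,x\rangle|_f\le|\langle\mathbb{A}x,x\rangle|_f$ (and its analogue with $\csc(\alpha_{max})Im\,\mathbb{A}$), then enlarging $\Delta_{(f,\delta)}$ to $S_1(\mathcal{H})=\Delta_{(f,0)}(\mathbb{A})$. Your polar-form estimate $r\cos\varphi\le r\cos\gamma_{min}$ is just a cleaner form of the paper's bound $|\langle Im A_m x,x\rangle|\ge\tan(\gamma_m)|\langle Re A_m x,x\rangle|$, and your caveat that part 2 needs $\alpha_{max}>0$ is well taken, since the stated hypotheses do not exclude $\alpha_{max}=0$.
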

\begin{proof}
\noindent \ref{item1-thm10}\ For any $ x\in \mathcal{H} $ we have
\begin{multline*}
f\left( \vert \langle \mathbb{A} x,x \rangle \vert_{f} \right) = \sum\limits_{m=1}^{n} f\left( \vert \langle A_{m} x,x \rangle \vert_{f} \right) \\
= \sum\limits_{m=1}^{n}f\left( \left( \vert \langle ReA_{m} x,x \rangle \vert^{2} + \vert \langle ImA_{m} x,x \rangle \vert^{2} \right)^{1/2} \right).
\end{multline*}
On the other hand, since for $ x\in \mathcal{H} $
\[
\vert \langle ImA_{m}x,x\rangle \vert \geq \tan(\gamma_{m}) \vert \langle ReA_{m}x,x\rangle \vert , \ m=1,\ldots, n
\]
and the tangent function is monotonically increasing $ \left[ 0, {\pi/2}\right]$, then from the previous estimate one gets
\begin{align*}
f\left( \vert \langle \mathbb{A} x,x \rangle \vert_{f} \right) &= \sum\limits_{m=1}^{n}f\left( \left(1+\tan(\gamma_{m}) \right)^{1/2}  \right) \vert \langle ReA_{m} x,x \rangle \vert \\
&\geq \sum\limits_{m=1}^{n}f\left( \left(1+\tan(\gamma_{min}) \right)^{1/2}  \vert \langle ReA_{m} x,x \rangle \vert \right) \\
&= \sum\limits_{m=1}^{n}f\left( \sec(\gamma_{min}) \vert \langle ReA_{m} x,x \rangle \vert \right)  \\
&= \sum\limits_{m=1}^{n}f\left( \vert \langle \sec(\gamma_{min}) ReA_{m} x,x \rangle \vert \right)  \\
&= f\left( \vert \langle \sec(\gamma_{min}) Re\mathbb{A} x,x \rangle \vert \right).
\end{align*}
Thus, for any $ x\in \mathcal{H} $,
\begin{equation}
\label{equ24}
\vert \langle \mathbb{A} x,x \rangle \vert_{f}\geq \vert \langle \sec(\gamma_{min})\mathbb{A} x,x \rangle \vert_{f},
\end{equation}
where $ \delta_{min}=\min\limits_{1\leq m \leq n}\delta_{m}. $
For any $ \delta\leq \Vert Re \mathbb{A} \Vert_{f} $ it is clear that
\begin{equation}
\label{equ25}
\Delta_{(f,\delta)} (\sec(\gamma_{min})Re\mathbb{A})\subset S_{1}(\mathcal{H}).
\end{equation}
Hence, from \eqref{equ24} and \eqref{equ25} we obtain that
\(
\omega_{(f,\delta)}(\sec(\gamma_{min})Re \mathbb{A})\leq \omega_{(f,0)}(\mathbb{A}).
\)
\noindent \ref{item2-thm10} From the following equality, for $ x\in \mathcal{H} $,
\[
f\left( \vert \langle \mathbb{A} x,x \rangle \vert_{f} \right) = \sum\limits_{m=1}^{n}f\left( \left( \vert \langle ReA_{m} x,x \rangle \vert^{2} + \vert \langle ImA_{m} x,x \rangle \vert^{2} \right)^{1/2} \right),
\]
together with the inequality
\(
\vert \langle ImA_{m} x,x \rangle \vert \leq \tan(\alpha_{m}) \vert \langle ReA_{m} x,x \rangle \vert,
\)
and by the decreasing monotonicity of cotangent function on $ \left( 0, {\pi/2} \right), $
\begin{multline*}
f\left( \vert \langle \mathbb{A} x,x \rangle \vert_{f} \right) \geq \sum\limits_{m=1}^{n}f\left( \left(1+\cot^{2}(\alpha_{m}) \right)^{1/2}  \right) \vert \langle ImA_{m} x,x \rangle \vert \\
= \sum\limits_{m=1}^{n}f\left( \csc(\alpha_{m}) \vert \langle ImA_{m} x,x \rangle \vert \right) = \sum\limits_{m=1}^{n}f\left( \vert \langle \csc(\alpha_{max}) ImA_{m} x,x \rangle \vert \right)  \\
= f\left(  \langle \csc(\alpha_{max}) Im \mathbb{A} x,x \rangle \right).
\end{multline*}
Hence, for $ x\in \mathcal{H} $
\begin{equation}
\label{equ26}
\vert \langle \csc(\alpha_{max}) Im\mathbb{A} x,x \rangle \Vert_{f}\leq \vert \langle \mathbb{A} x,x  \rangle \Vert_{f} .
\end{equation}
On the other hand, for any $ \delta \leq \Vert  \csc(\alpha_{max}) Im\mathbb{A}  \Vert_{f} $ it is clear that
\begin{equation}
\label{equ27}
\Delta_{(f,\delta)}(\csc(\alpha_{max}) Im\mathbb{A})\subset S_{1}(H).
\end{equation}
Consequently, from \eqref{equ26} and \eqref{equ27} it is established that
\[
\omega_{(f,\delta)}(\csc(\alpha_{max})Im \mathbb{A})\leq \omega_{(f,0)}(\mathbb{A}).  \tag*{\qedhere}
\]
\end{proof}
\begin{remark}
Theorem \ref{thm10} generalizes \textnormal{\cite[Thm. 3.5]{Ismailov2026}}.
\end{remark}
\begin{corollary}
\label{cor12}
Under the conditions of Theorem \ref{thm10}, if $ n=1, \ \delta=0 $ and $ 0\leq \gamma_{1} <\alpha_{1} < \frac{\pi}{2}  $, it follows that
\[
\omega(ReA_{1})\leq \cos(\gamma_{1}) \omega(A_{1}) \ \text{and} \ \omega(ImA_{1})\leq \sin(\gamma_{1}) \omega(A_{1}),
\]
and thus,
\begin{align*}
\omega^{2}(A_{1}) & \geq \dfrac{\Vert ReA_{1} \Vert^{2}+\Vert ImA_{1} \Vert^{2}} {\sin^{2}(\alpha_{1})+\cos^{2}(\gamma_{1})} \\
& \geq \dfrac{\Vert ReA_{1} + ImA_{1} \Vert} {\sin^{2}(\alpha_{1})+\cos^{2}(\gamma_{1})}
= \dfrac{\Vert A_{1}^{*}A_{1}+A_{1}A_{1}^{*}\Vert} {2\left( \sin^{2}(\alpha_{1})+\cos^{2}(\gamma_{1})\right) } .
\end{align*}
\end{corollary}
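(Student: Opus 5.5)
The plan is to specialize Theorem \ref{thm10} to $n=1$, $\delta=0$ and then use two standard identities for self-adjoint operators. I take $f(t)=t$, which is admissible since Theorem \ref{thm10} places no convexity requirement on $f$; for $n=1$ any $f$ gives $\vert\langle A_1x,x\rangle\vert_f=\vert\langle A_1x,x\rangle\vert$ anyway. Then $\omega_{(f,0)}$ reduces to the classical numerical radius $\omega$. Since $\sec(\gamma_1)>0$, item \ref{item1-thm10} of Theorem \ref{thm10} gives $\sec(\gamma_1)\,\omega(ReA_1)\le\omega(A_1)$, that is, $\omega(ReA_1)\le\cos(\gamma_1)\omega(A_1)$. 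Likewise item \ref{item2-thm10} gives $\csc(\alpha_1)\,\omega(ImA_1)\le \omega(A_1)$, that is, $\omega(ImA_1)\le \sin(\alpha_1)\omega(A_1)$.

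The second bound naturally carries $\sin(\alpha_1)$, not $\sin(\gamma_1)$. This matches the denominator $\sin^2(\alpha_1)+\cos^2(\gamma_1)$ in the displayed conclusion, so I would read the first display with $\sin(\alpha_1)$. It is worth checking the pointwise step of Theorem \ref{thm10} here: from $\vert\langle ImA_1x,x\rangle\vert\le\tan(\alpha_1)\,\langle ReA_1x,x\rangle$ one gets $\vert\langle A_1x,x\rangle\vert\ge\csc(\alpha_1)\vert\langle ImA_1x,x\rangle\vert$. Similarly, $\vert\langle ImA_1x,x\rangle\vert\ge\tan(\gamma_1)\langle ReA_1x,x\rangle$ gives $\vert\langle A_1x,x\rangle\vert\ge\sec(\gamma_1)\langle ReA_1x,x\rangle$. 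Both follow directly from the angular constraint $\gamma_1\le\arg\langle A_1x,x\rangle\le\alpha_1$.

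Next, $ReA_1$ and $ImA_1$ are self-adjoint, hence normal, so $\omega(ReA_1)=\Vert ReA_1\Vert$ and $\omega(ImA_1)=\Vert ImA_1\Vert$ (equality case of \eqref{equ1}). Squaring both bounds and adding gives
\[
\Vert ReA_1\Vert^2+\Vert ImA_1\Vert^2\le\left(\cos^2(\gamma_1)+\sin^2(\alpha_1)\right)\omega^2(A_1).
\]
The denominator is positive, which yields the first inequality.

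For the second inequality, the middle term should be read as $\Vert Re^2A_1+Im^2A_1\Vert$. I would use $\Vert Re^2A_1+Im^2A_1\Vert\le\Vert Re^2A_1\Vert+\Vert Im^2A_1\Vert=\Vert ReA_1\Vert^2+\Vert ImA_1\Vert^2$, which follows from the triangle inequality and the $C^*$-identity. Finally, I would expand $ReA_1=(A_1+A_1^*)/2$ and $ImA_1=(A_1-A_1^*)/(2i)$. The cross terms cancel, giving $Re^2A_1+Im^2A_1=\tfrac12(A_1^*A_1+A_1A_1^*)$, which is the last equality. The only real obstacle is interpretive: sorting out the $\sin(\gamma_1)$ versus $\sin(\alpha_1)$ mismatch and the meaning of the middle norm. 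The computations themselves are routine.
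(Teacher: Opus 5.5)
Your proposal is correct and follows the route the paper intends (the paper gives no separate proof of this corollary). The steps are: specialize Theorem~\ref{thm10} to $n=1$, $\delta=0$; use $\omega=\Vert\cdot\Vert$ for the self-adjoint operators $ReA_1$ and $ImA_1$; square and add; then apply the triangle inequality together with $Re^2A_1+Im^2A_1=\frac12(A_1^*A_1+A_1A_1^*)$.

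Your two corrections, $\sin(\alpha_1)$ in place of $\sin(\gamma_1)$ and $\Vert Re^2A_1+Im^2A_1\Vert$ in place of $\Vert ReA_1+ImA_1\Vert$, are right and in fact necessary, since the statement as written is false. The scalar $A_1=e^{i\alpha_1}$ on $\mathcal{H}=\mathbb{C}$ violates $\omega(ImA_1)\le\sin(\gamma_1)\omega(A_1)$. Also, $\Vert ReA_1+ImA_1\Vert$ is only homogeneous of degree one, so it cannot equal $\frac12\Vert A_1^*A_1+A_1A_1^*\Vert$ in general.
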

It must be noted that for an accretive operator $ A_{1}\in\mathfrak{B}(\mathcal{H})  $ satisfying the condition $ W(A_{1})\subset S(\alpha_{1}) ,$  the following evaluation was proved in \cite{Sammour2022}
\[
\dfrac{\Vert A_{1}^{*}A_{1}+A_{1}A_{1}^{*}\Vert} {2\left( 1+\sin^{2}(\alpha_{1}))\right) } \leq \omega ^{2}(A_{1}).
\]
However, the previous estimate shows that the last result can be improved for accretive-dissipative matrices.
\begin{corollary}
\label{cor13}
Let $ \dim\mathcal{H}<\infty. $ Under the condition of Theorem \ref{thm10} if $ n=1, \ \delta=0 $ and $ 0\leq \gamma_{1} <\alpha_{1} < \frac{\pi}{2}  $, then
\[
\Vert ReA_{1}\Vert\leq \cos(\gamma_{1}) \omega(A_{1}) \ \text{and} \ \Vert ImA_{1}\Vert\leq \sin(\alpha_{1}) \omega(A_{1}).
\]
Then, it is well-known from \textnormal{\cite{Sammour2022}} that
\[
\Vert A_{1} \Vert^{2} \leq \Vert ReA_{1} \Vert^{2} + \Vert ImA_{1} \Vert^{2} \leq \left( \sin^{2}(\alpha_{1})+\cos^{2}(\gamma_{1}) \right) \omega^{2}(A_{1}).
\]
Hence,
\(
\Vert A_{1} \Vert\leq \sqrt{\sin^{2}(\alpha_{1})+\cos^{2}(\gamma_{1})} \  \omega(A_{1}).
\)
\end{corollary}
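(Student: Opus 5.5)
The plan is to avoid Theorem \ref{thm10} and argue directly with the angular description of $W(A_1)$. Two facts do the work: the self-adjoint operators $ReA_1$ and $ImA_1$ have norm equal to their numerical radius, and the numerical range of $A_1$ is confined to the sector $\gamma_1\le\varphi\le\alpha_1$. The norm estimate at the end then follows from the cited inequality of \cite{Sammour2022}.

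\emph{Step 1 (real part).} Fix $x\in S_1(\mathcal{H})$ and write $\langle A_1x,x\rangle=re^{i\varphi}$ with $r\ge 0$ and $\gamma_1\le\varphi\le\alpha_1<\frac{\pi}{2}$. Then
\[
\langle ReA_1x,x\rangle=Re\langle A_1x,x\rangle=r\cos\varphi\ge 0 .
\]
Since $\cos$ is decreasing on $[0,\frac{\pi}{2}]$, we get
\[
0\le\langle ReA_1x,x\rangle\le\cos(\gamma_1)\,|\langle A_1x,x\rangle|\le\cos(\gamma_1)\,\omega(A_1).
\]
Because $ReA_1$ is self-adjoint, $\Vert ReA_1\Vert=\omega(ReA_1)=\sup_{x\in S_1(\mathcal{H})}|\langle ReA_1x,x\rangle|$. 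Taking the supremum gives $\Vert ReA_1\Vert\le\cos(\gamma_1)\omega(A_1)$.

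\emph{Step 2 (imaginary part).} In the same way, $\langle ImA_1x,x\rangle=r\sin\varphi\ge0$, and $\sin$ is increasing on $[0,\frac{\pi}{2}]$. Hence
\[
\langle ImA_1x,x\rangle\le\sin(\alpha_1)\,|\langle A_1x,x\rangle|\le\sin(\alpha_1)\,\omega(A_1).
\]
Self-adjointness of $ImA_1$ then yields $\Vert ImA_1\Vert\le\sin(\alpha_1)\omega(A_1)$.

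This step needs care: the bound uses $\sin(\alpha_1)$, not $\sin(\gamma_1)$, which is the correct extremal angle for the imaginary part. Finite dimensionality is not needed for Steps 1--2; it only enters so that $W(A_1)$ is closed and the suprema are attained. That keeps the sector hypothesis exact, including the boundary rays.

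\emph{Step 3 (norm bound).} We need $\Vert A_1\Vert^2\le\Vert ReA_1\Vert^2+\Vert ImA_1\Vert^2$. This is the main obstacle, because the general identity $A_1^*A_1+A_1A_1^*=2(Re^2A_1+Im^2A_1)$ only gives
\[
\Vert A_1\Vert^2\le 2\bigl(\Vert ReA_1\Vert^2+\Vert ImA_1\Vert^2\bigr).
\]
Removing the factor $2$ relies on the accretive-dissipative structure of the matrix $A_1$ ($ReA_1\ge0$, $ImA_1\ge0$). Here I would invoke the inequality quoted from \cite{Sammour2022} rather than reprove it. If a self-contained argument were wanted, I would first try the following. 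Take a unit vector $x$ with $\Vert A_1x\Vert=\Vert A_1\Vert$, which exists since $\dim\mathcal{H}<\infty$. Expand
\[
\Vert A_1x\Vert^2=\Vert ReA_1x\Vert^2+\Vert ImA_1x\Vert^2+2\,Im\langle ReA_1x,ImA_1x\rangle,
\]
and try to control the cross term using the positivity of both parts.

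\emph{Step 4 (conclusion).} Squaring and adding the bounds from Steps 1 and 2 gives
\[
\Vert A_1\Vert^2\le\bigl(\sin^2(\alpha_1)+\cos^2(\gamma_1)\bigr)\omega^2(A_1).
\]
Taking square roots yields the claimed inequality.
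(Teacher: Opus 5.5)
Your proof is correct and follows the paper's route: Steps 1--2 are the $n=1$, $\delta=0$ case of the pointwise sector estimate in the proof of Theorem~\ref{thm10} (where $\omega_{(f,0)}$ reduces to $\omega$), combined with $\omega(ReA_1)=\Vert ReA_1\Vert$ and $\omega(ImA_1)=\Vert ImA_1\Vert$ for self-adjoint operators, and Step 3 rests on the accretive-dissipative inequality $\Vert A_1\Vert^2\le\Vert ReA_1\Vert^2+\Vert ImA_1\Vert^2$ from \cite{Sammour2022}, exactly as the statement itself does. Your tentative self-contained sketch of that inequality is therefore not needed, and the finite-dimensionality hypothesis is there for the cited matrix result rather than for Steps 1--2.
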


It must be noted that in the matrix case of $ A_{1}\in Sec_{\alpha_{1}}(\mathcal{H}), \ 0\leq \alpha_{1} <\dfrac{\pi}{2}, $ the following main result was obtained in \cite{Sammour2022},
\[
\Vert A_{1} \Vert\leq \sqrt{1+\sin^{2}(\alpha_{1})}\ \omega(A_{1}).
\]
Therefore, the previous estimation shows that in the accretive-dissipative matrix case, the last inequality can be improved.

It must be noted that for accretive-dissipative operators $ A_{1},B_{1}\in \mathfrak{B}(\mathcal{H}) $ with conditions
\begin{align*}
W(A_{1}) & \subset \left\lbrace  re^{i\varphi}: r\geq 0, \ 0< \gamma_{1}<\alpha_{1}<\dfrac{\pi}{2}\right\rbrace
\\
W(B_{1}) & \subset \left\lbrace  re^{i\varphi}: r\geq 0, \ 0< \gamma_{1}<\alpha_{1}<\dfrac{\pi}{2}\right\rbrace
\end{align*}
from the inequalities
\begin{align*}
\Vert A_{1} \Vert & \leq \sqrt{\sin^{2}(\alpha_{1})+\cos^{2}(\gamma_{1})}\omega(A_{1}),
\\
\Vert B_{1} \Vert & \leq \sqrt{\sin^{2}(\alpha_{1})+\cos^{2}(\gamma_{1})}\omega(B_{1}),
\end{align*}
we have the following interpolated inequality for the matrices $ A_{1} $ and $ B_{1} $
\[
\omega(A_{1}B_{1})\leq \left( \sin^{2}(\alpha_{1})+\cos^{2}(\gamma_{1}) \right) \omega(A_{1}) \omega(B_{1}).
\]
In this case, from the last relation it is clear that
\[
\omega(A_{1}B_{1})\leq \left( 1+\sin^{2}(\alpha_{1}) \right) \omega(A_{1}) \omega(B_{1}).
\]
This was proved for accretive-dissipative matrices in \cite{Sammour2022}. However, the previous evaluation shows that the last interpolated formula can be improved.

\end{document}